\documentclass[onefignum,onetabnum]{siamart190516}

\pdfoutput=1
\usepackage{amsfonts}
\usepackage{graphicx}
\usepackage{epstopdf}
\usepackage{color}
\usepackage{algorithmic}
\usepackage[title]{appendix}

\graphicspath{{figures/}}

\usepackage[labelformat=simple]{subcaption}

\usepackage{xspace}

\ifpdf
  \DeclareGraphicsExtensions{.eps,.pdf,.png,.jpg}
\else
  \DeclareGraphicsExtensions{.eps}
\fi


\newsiamremark{remark}{Remark}
\newsiamremark{hypothesis}{Hypothesis}
\crefname{hypothesis}{Hypothesis}{Hypotheses}
\newsiamthm{claim}{Claim}




\newtheorem{lem}[theorem]{Lemma}
\newtheorem{Prop}{Proposition}

\newcommand{\Real}{\mathbb{R}}
\renewcommand{\v}[1]{\mathbf{#1}}
\renewcommand{\matrix}[1]{\mathbf{#1}}
\newcommand{\x}{\v{x}}
\newcommand{\cp}{\textrm{cp}}
\newcommand{\cpbar}{\overline{\cp}}
\newcommand{\dx}{\Delta x}
\newcommand{\tol}{\texttt{tol}\xspace}
\newcommand{\pdedomain}{\bar{\Omega}}
\newcommand{\eps}{\varepsilon}
\newcommand{\mc}{\mathcal}
\newcommand{\penaltyparam}{\bar{\gamma}}

\title{Simulation and Optimization of Mean First Passage Time Problems in
  2-D using Numerical Embedded Methods and Perturbation Theory}
\author{Sarafa Iyaniwura\thanks{Dept. of Mathematics,
    Univ. of British Columbia, Vancouver, B.C., Canada.} \and
    Tony Wong\footnotemark[1] \and
    Michael J. Ward\footnotemark[1] \and
    Colin~B.~Macdonald\footnotemark[1]\,\,\thanks{corresponding
      author, \texttt{cbm@math.ubc.ca}}
  }

  \headers{Simulation and Optimization of MFPT}{S. Iyaniwura, T. Wong, M. J. Ward, and C. B. Macdonald}

\ifpdf
\hypersetup{
  pdftitle={Simulation and Optimization of MFPT},
  pdfauthor={S. Iyaniwura, T. Wong, M. Ward, and C. B. Macdonald}
}
\fi


\begin{document}

\maketitle

\begin{abstract}
  We develop novel numerical methods and perturbation approaches to
  determine the mean first passage time (MFPT) for a Brownian particle
  to be captured by either small stationary or mobile traps inside a
  bounded 2-D confining domain. Of particular interest is to identify
  optimal arrangements of small absorbing traps that minimize the
  average MFPT.  Although the MFPT, and the associated optimal trap
  arrangement problem, has been well-studied for disk-shaped domains,
  there are very few analytical or numerical results available for
  general star-shaped domains or for thin domains with large aspect
  ratio. Analytical progress is challenging owing to the need to
  determine the Neumann Green's function for the Laplacian, while the
  numerical challenge results from a lack of easy-to-use and fast
  numerical tools for first computing the MFPT and then optimizing
  over a class of trap configurations. In this direction, and for the
  stationary trap problem, we develop a simple embedded numerical
  method, based on the Closest Point Method (CPM), to perform MFPT
  simulations on elliptical and star-shaped domains. For periodic
  mobile trap problems, we develop a robust CPM method to compute the
  average MFPT. Optimal trap arrangements are identified numerically
  through either a refined discrete sampling approach or from a
  particle-swarm optimization procedure. To confirm some of the
  numerical findings, novel perturbation approaches are developed to
  approximate the average MFPT and identify optimal trap
  configurations for a class of near-disk confining domains or for an
  arbitrary thin domain of large aspect ratio.
\end{abstract}

\begin{keywords}
  numerical methods, asymptotic analysis, closest point methods, narrow escape, optimal trap placement
\end{keywords}

\begin{AMS}
  65M06, 65N06, 35C20, 35K05, 35J05
\end{AMS}

\section{Introduction}

The concept of first passage time has been successfully
  employed in studying problems in several fields of physical and
  biological sciences such as physics, biology, biochemistry, ecology,
  and biophysics, among others (see \cite{grigoriev2002kinetics},
  \cite{holcman2004escape}, \cite{Venu} \cite{schuss2007narrow},
  \cite{ricc1985}, and the references therein). The mean first passage
  time (MFPT) is defined as the average timescale for which a
  stochastic event occurs \cite{van1992stochastic}. Some interesting
  problems formulated as MFPT or narrow escape problems include
  calculating the time it takes for a predator to locate its prey
  \cite{Venu}, the time required for diffusing surface-bound molecules
  to reach a localized signaling region on a cell membrane
  \cite{coombs2009}, and the time needed for proteins searching for
  binding sites on DNA \cite{mirny2009}, among others. In this paper,
  we are interested in the time it take for a Brownian particle to be
  captured by small absorbing traps in a bounded 2-D domain. Narrow escape
  or MFPT problems have been studied extensively both numerically and
  analytically using techniques such as the method of matched
  asymptotic expansions, and there is a growing literature on this
  topic (see \cite{pillay2010asymptotic}, \cite{cheviakov2010asymptotic},
  \cite{kolokolnikov2005optimizing}, \cite{lindsay2017optimization},
  \cite{tzou2015mean}, \cite{redner}, \cite{coombs2009}, and
  \cite{Venu}, and the references therein).

  There are two main classifications of MFPT problems in this context:
  one where the absorbing traps are stationary \cite{coombs2009},
  \cite{Venu}, \cite{cheviakov2010asymptotic}, and the other where the
  traps are mobile \cite{lindsay2017optimization},
  \cite{tzou2015mean}. For the situation with stationary traps, the
  MFPT can be calculated analytically and explicitly for a
  one-dimensional domain, and for a disk-shaped domain with a circular
  trap located at the center of the disk. For domains with multiple
  traps where the trap radius is relatively small compared to the
  length-scale of the domain, the method of matched asymptotic
  expansions can be used to derive an approximation for the MFPT (see
  \cite{cheviakov2010asymptotic}, \cite{kolokolnikov2005optimizing},
  \cite{lindsay2017optimization}, \cite{tzou2015mean},
  \cite{redner}). This method can also be used to approximate the MFPT
  in a regular one- or two-dimensional domain with a moving trap
  \cite{pillay2010asymptotic}, \cite{tzou2015mean},
  \cite{lindsay2017optimization}. However, in the case of an irregular
  domain, computing the MFPT has proven to be challenging both
  analytically and numerically. The main difficulty in solving this
  problem analytically arises from determining the corresponding
  Green's function in the noncircular confining domain, while the
  challenges in the numerical computation arises from implementing the
  appropriate boundary conditions, especially for the case of a moving
  trap, where the location of the trap changes over time. Tackling
  such a problem numerically requires a technique that continuously
  updates the location of the trap, while enforcing the necessary
  boundary conditions at each time-step. Some commercial finite
  element software packages have been employed in studying MFPT
  problems of this form \cite{tzou2015mean}. However, for other
  complicated MFPT problems such as determining the optimal
  configuration of a prescribed number of traps that minimizes the
  average MFPT under a continuous deformation of the boundary of the
  domain, the use of standard software packages is both tedious and
  challenging since the user has little control of the software.

  In this paper, we develop a closest point method (CPM) to numerically
  compute the mean first passage time for a Brownian particle to
  escape a 2-D bounded domain for both stationary and mobile traps.
  CPMs are embedded numerical techniques that use e.g., finite
  differences to discretize partial differential equations (PDEs) and
  interpolation to impose boundary conditions or other geometric
  constraints \cite{ruuth2008, macdonald2011, macdonald2009,
    MacdonaldMerrimanRuuth:ptclouds}.  In addition to computing the MFPT,
  we will explore some interesting optimization experiments that
  focus on minimizing the average capture time of a Brownian particle
  with respect to both the location of small traps in the domain and
  the geometry of irregular 2-D domains.

  More specifically, we will use the CPM to compute the average MFPT
  for a Brownian particle in both an elliptical domain and a class of
  star-shaped domains that contains small stationary traps. One
  primary focus is to use the CPM together with a particle swarm
  optimization procedure \cite{kennedy2010} so as to numerically
  identify trap configurations that minimize the average MFPT in 2-D
  domains of a fixed area whose boundary undergoes a continuous
  deformation starting from the unit disk. In particular, we will show
  numerically that an optimal ring pattern of three traps in the unit
  disk, as established in \cite{kolokolnikov2005optimizing}, deforms
  into a colinear arrangement of traps for a long thin ellipse of the
  same area. For stationary traps, novel perturbation approaches will
  be developed to approximate the optimal average MFPT in near-disk
  domains and for long-thin domains of high aspect ratio. Moreover,
  certain optimal closed trajectories of a moving trap in a circular
  or elliptical domain are identified numerically from our CPM
  approach. In the limit of large rotation frequency analytical
  results for the optimal trajectory of a moving trap are presented to
  confirm our numerical findings.

  In the remainder of this introduction we introduce the relevant
  PDEs for the MFPT and average MFPT in 2-D domains with stationary
  and mobile traps. A brief outline of the paper is given at the end
  of this introductory material.

\subsection{Derivation of the MFPT model}\label{Derivation1}

Consider a Brownian particle on a 1-D interval $[0, L]$ that makes a
discrete jump of size $\Delta x$ within a small time interval
$\Delta t$. Suppose that this particle can exit the interval
only through the end points at $x=0$ and $x=L$.  Let $u(x)$ be the
MFPT for the particle to exit the interval starting from a point
$x \in [0, L]$. Then, $u(x)$ can be written in terms of the
  MFPT at the two neighboring points of $x$ by
  $u(x) = \frac{1}{2} \left[ u(x-\Delta x) + u(x + \Delta x)  \right] +
  \Delta t$.
The absorbing end points imply the boundary conditions
$u(0) = 0$ and $u(L) = 0$: the particle escapes immediately if it
starts at a boundary point.  By Taylor-expanding and taking the limits
$\Delta x \to 0$ and $\Delta t \to 0$ such that
$D = (\Delta x)^2/\Delta t$, the discrete equation reduces to 
the ODE problem 
\begin{equation*}
  D \,u_{xx} = -1\,, \quad  0 < x < L\,; \qquad u(0)=0\,, \quad u(L)=0\,,
\end{equation*}
where $D$ is the diffusion coefficient of the particle.  This
derivation can be readily adopted to a scenario where the ends of the
interval $[0,L]$ are reflecting but the interval contains a stationary
absorbing trap of length $2\varepsilon$, with $\varepsilon > 0$,
centered at the point $x_* \in [0, L]$. In this case, the end points
have no-flux boundary conditions, while zero-Dirichlet boundary
conditions are specified on the boundaries of the trap.  Consequently,
the MFPT $u(x)$ for the Brownian particle satisfies
\begin{equation*}
\begin{gathered}
  D \,u_{xx} = -1\,, \quad
  x \in (0, x_{*} - \varepsilon) \, \cup\, (x_{*} + \varepsilon, L)\,;\\
  u_x(0)= u_x(L)=0\,; \quad
  u(x_{*} - \varepsilon)=u(x_{*} + \varepsilon)=0\,.
\end{gathered}
\end{equation*}

Next, we derive the MFPT problem for a moving trap.  This derivation
is slightly different from that of a stationary trap because it
requires tracking the location of the moving trap at each time-step.
We start by considering a particle performing a 1-D random
walk on the interval $[0,L]$, which contains a small mobile absorbing
trap that moves in a periodic path contained in the interval.
Similar to above, the discrete equation for the MFPT $u(x,t)$ satisfies
\begin{equation*}
  u(x,t) = \frac{1}{2} \left[ u(x-\Delta x, t + \Delta t) +
    u(x + \Delta x, t + \Delta t)  \right] + \Delta t\,.
\end{equation*}
	Upon Taylor expanding in $\Delta x$ and $\Delta t$, and
  taking the limits $\Delta x \to 0$ and $\Delta t \to 0$, such that
  $D = (\Delta x)^2/(2\Delta t)$, the resulting PDE for
the MFPT $u(x,t)$ is
\begin{equation*}
\begin{gathered}
  u_t + D  u_{xx}  + 1 = 0, \quad
  x \in (0, x_{*}(t) - \varepsilon) \, \cup\, (x_{*}(t) + \varepsilon, L),
  \;\;
  0 < t< T,\\
  u(x, 0) = u(x, T),  \,\,
  u(x_{*}(t)-\varepsilon, t) = 0,  \,\,
  u(x_{*}(t)+\varepsilon, t) = 0,  \,\,
  u_x(0, t) = u_x(L, t) = 0,
\end{gathered}
\end{equation*}
where $T$ is the period of oscillation of the trap.  Due to the
oscillations of the trap, we have imposed the time-periodic boundary
condition $u\left(x,0 \right) = u\left(x, T \right)$, which specifies
that the MFPT at each point in the domain should be the same after
each period.  The conditions $u(x_{*}(t) - \varepsilon,t)=0$ and
$u(x_{*}(t) + \varepsilon,t)=0$ imply that the particle is captured by
the edges of the moving trap.  Finally, we impose the no-flux
conditions $u_x(0,t) = u_x(L,t) = 0$ to ensure that the outer
boundaries are reflecting.

\subsection{MFPT problems in 2-D}

For an arbitrary bounded domain $\Omega \subset \mathbb{R}^2$,
containing $m$ small stationary absorbing traps
$\Omega_1, \dots, \Omega_m$ (such as shown in
Figure~\ref{particle_disk_examples:a} for $m=1$), the 
MFPT $u(\x)$ for a Brownian particle starting at a point
$\x \in \pdedomain$ is
\begin{equation}\label{MFPT_DiskSation}
\begin{split}
D\, \nabla^2 u  = -1 \,, &\quad \x \in  \pdedomain\,;\\
\partial_n u = 0 \,, \,\,\, \x \in \partial \Omega\,; &
\qquad u =0\,, \,\,\, \x \in \partial \Omega_i\,, \qquad i = 1, \dots, m\,,
\end{split}
\end{equation} 
where $\x \equiv (x,y)$, $D$ is the diffusion coefficient of the
particle, $\partial_n$ denotes the outward normal derivative on the
domain boundary $\partial \Omega$, and
$\pdedomain = \Omega \setminus \cup_{i=1}^{m} \Omega_i$.

  \begin{figure}
  \centering
  \makebox{
    \scriptsize{(a)}%
    \includegraphics[height=22ex]{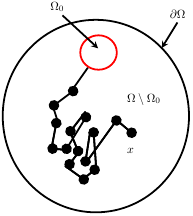}
    \phantomsubcaption
    \label{particle_disk_examples:a}
  }
  \qquad
  \qquad
  \makebox{
    \scriptsize{(b)}%
    \includegraphics[height=21ex]{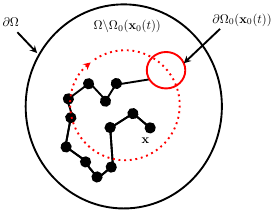}
    \phantomsubcaption
    \label{particle_disk_examples:b}
  }
  \vspace*{-1ex}
    \caption{Brownian particles in disk-shaped regions with absorbing
      traps.  In (a), a particle starting at
      $\x \in \Omega \setminus \Omega_0$ in $\Omega$ eventually hits a
      stationary absorbing trap $\Omega_0$.  In (b), the trap
      $\Omega_0(\x_0(t))$ rotates about the center of the region.}
  \label{particle_disk_examples}
\end{figure}

If the traps are moving in periodic paths with positions $\x_i(t)$
(see Figure~\ref{particle_disk_examples:b}),
then the corresponding MFPT problem is
\begin{equation}\label{MFPT_timedep}
\begin{split}
  &u_t + D \nabla^2  u + 1 = 0\,, \quad \x \in \pdedomain(t)\,; \\
  &\partial_n  u = 0\,,  \,\, \x  \in \partial \Omega\,;  \qquad
  u = 0\,, \,\, \x \in \partial \Omega_i (t)\,;  \qquad
  u(\x, 0) = u(\x, T)\,,
\end{split}
\end{equation}
where $T$ is the period of the moving traps.  Often it will be useful
to write the periodic motion in terms of an angular frequency
$\omega$, where $T = {2\pi/\omega}$.

\subsubsection{Time reversal}
Our numerical calculations will work significantly better if we solve
problem \eqref{MFPT_timedep} ``backwards'' in time, e.g., after the
change of variables $\tau = -t$.
The problem is still periodic in $\tau$ with periodic $T$, namely
\begin{equation}\label{MFPT_timedep_rev}
\begin{split}
  &u_{\tau} = D \nabla^2  u + 1\,, \quad \x \in \Omega
  \setminus\pdedomain(\tau)\,; \\
  &\partial_n  u = 0\,,  \,\, \x  \in \partial \Omega\,;  \qquad
  u = 0\,, \,\, \x \in \partial \Omega_i (\tau)\,;  \qquad
  u(\x, 0) = u(\x, T)\,.
\end{split}
\end{equation}

\subsection{An elliptic problem}

Suppose that the domain $\Omega \subset \mathbb{R}^2$ is a disk
containing a single moving trap centered at $\x_0(t)$ that rotates
about the center of the disk on a ring in the clockwise direction,
such as illustrated in Figure~\ref{particle_disk_examples:b}.  In this
case, using the change of variables
$(x,y)=( r\cos \theta, r\sin \theta)$, with $0 < r \leq 1$, and
$0 \leq \theta \leq 2\pi$, \eqref{MFPT_timedep} can be written in
polar coordinates, with the trap center given by
$\x_0(t) = ( r_0 \cos (\omega t), r_0 \sin (\omega t))$, where $r_0$
is the distance from the center of the trap to the center of the disk.
Furthermore, setting $\phi = \theta - \textrm{mod}(\omega t, 2\pi)$
with $0 < \phi < 2 \pi$, and $u(r,\theta, t) = u(r, \phi(t))$, the
MFPT problem reduces to the elliptic PDE problem
\begin{equation}\label{MFPT_EllipticalMFPT}
\begin{split}
  D\, \nabla^2 u + \omega u_{\phi} +  1=0\,, & \quad
  \mathbf{x} \in \Omega \setminus \Omega_0(r_0)\,;\\
  u =0\,, \,\,\, \mathbf{x} \in \partial \Omega_0 (r_0)\,; \quad &
  \partial_n u = 0 \,, \,\,\, \mathbf{x} \in \partial \Omega\,.
\end{split}
\end{equation}
Here $\nabla^2 u $ is the Laplacian in polar coordinates, and
$u_{\phi}$ is the derivative of $u$ in the transformed angular
coordinate (see \cite{lindsay2017optimization}, \cite{tzou2015mean}
for more details). This reformulation enables us to study an elliptic
PDE, as compared to a more challenging time-dependent parabolic
problem.  However, \eqref{MFPT_EllipticalMFPT} can only be employed in
studying MFPT problems in a domain that is invariant with respect to
the location of the moving trap.

\subsection{Feature extraction}
\label{sec:features}
The MFPT depends on the starting location $\x$ of the
particle. Assuming a uniform distribution of starting locations, the
\textit{average/expected} MFPT for a particle to exit the region
starting from anywhere in the domain is
\begin{equation}\label{AveMFPT}
  \begin{split}
    \overline{u} = \frac{1}{|\pdedomain |} \int_{\pdedomain } u(\x) \,
    \text{d}\x\,,
    \qquad \mbox{where } \quad |\pdedomain| = |\Omega \backslash \cup_{i=1}^{m}
    \Omega_i | \,,
\end{split}
\end{equation}
and $|\pdedomain|$ denotes the area of $\pdedomain$. For the case of a
moving trap, the average MFPT is
\begin{equation}\label{AveMFPT_MovTrap}
\begin{split}
  \overline{u} = \frac{1}{T\,|\pdedomain |} \int_{0}^{T} \int_{\pdedomain }
  u(\x,t) \,\,\, \text{d}\x \, \text{d}t\,.
\end{split}
\end{equation} 
The time integral averages the MFPT over a period, which ensures that
the escape time of the particle is independent of the location of the
trap.  These average MFPT quantities will be used below in our
computation and optimization experiments.

In \S~\ref{CPM_section}, we discuss numerical techniques to compute
solutions to our MFPT problems. In \S~\ref{StationaryTrap_Section} and
\S~\ref{MovingTrap_Section}, we give numerical results for some MFPT
problems with stationary traps and a moving trap,
respectively. Moreover, some numerical optimization experiments are
performed to identify trap configurations that minimize the average
MFPT for a Brownian particle. In \S~\ref{sec:analysis}, asymptotic
results for the MFPT, based on various perturbation schemes, are used
to confirm some of our numerical results in
\S~\ref{StationaryTrap_Section} and \S~\ref{MovingTrap_Section}.  A
brief discussion in \S~\ref{sec:discuss} concludes the paper.

\section{The numerical algorithm}\label{CPM_section}

Closest Point Methods (CPMs) are numerical techniques for
solving PDEs on curved surfaces and other irregularly
shaped domains.  The key idea is to embed the physical domain of
interest into an unfitted numerical grid enveloping the
surface.  All grid points that lie on the interior of
the domain are simply physical solution values, while those that lie
outside the domain are used to impose boundary conditions.  In this
paper, we apply the closest point method to mean first passage time
problem in 2-D domains.  Solving MFPT problems numerically in 2-D
domains using regular finite difference methods comes with certain
difficulties.  Most notably, implementing boundary conditions on
curved boundaries is complicated because grid points do not lie on
those curves.  Fitted grids (such as triangulations) can approximate
curved boundaries but require frequent remeshing in moving boundary
problems.  Embedded methods avoid these remeshing steps.

\subsection{Closest points}
Every grid point is associated with its closest point (by Euclidean
distance) in the physical domain
  $\cp(\x) := \textrm{argmin}_{\v{y} \in \pdedomain} \|\x - \v{y}\|_2$,
where we recall that the domain of our PDE is
$\pdedomain = \Omega \setminus \cup_{i=1}^{m} \Omega_i$.  Note if $\x$
is an interior point, its closest point is simply itself:
$\cp(\x) = \x$.  The closest point function can be precomputed in
closed form for simple shapes, for example, for a disc of radius $R$
punctured by a small $\varepsilon$-radius hole, such a function could
be given by
\begin{align*}
  \cp_{\textrm{punc.disc}}(\x) =
  \begin{cases}
    (\varepsilon, 0) &  \text{if $\x = (0,0)$,} \\
    \varepsilon\frac{\x}{\|\x\|}  &  \text{if $\|\x\| < \varepsilon$,} \\
    \x  &  \text{if $\varepsilon \le \|\x\| \le R$,} \\
    R\frac{\x}{\|\x\|}  &  \text{otherwise (i.e., $\|\x\| > R$).}
  \end{cases}
\end{align*}
We assume that we have either approximate or exact samples
of the closest-point function available for our method; this is our
preferred \emph{representation} of the geometry.

The $\cp$ function can be used to
\emph{extend} functions defined in the domain out into the ambient
space surrounding the domain.
The simplest such \emph{extension} is
\begin{align}\label{cpext}
  v(\x) := u(\cp(\x)),
\end{align}
which defines a function $v : B(\pdedomain) \to \Real$ which agrees
with $u : \pdedomain \to \Real$ for points $\x \in \pdedomain$
and is constant in the normal direction outside of the domain $\pdedomain$.
Here
$B(\pdedomain) \supset \pdedomain$, for example all of $\Real^2$ or a
padded bounding box of $\pdedomain$.
In practice, we only need $B(\pdedomain)$ to be only a few grid points
larger than $\pdedomain$ itself.

\subsection{Imposing boundary conditions using extensions}

Suppose we want to impose a homogeneous Neumann boundary condition
$\partial_n u=0$ at all points along some curve $\gamma$ making up all
or part of the boundary of $\pdedomain$.  Given
$u : \pdedomain \to \Real$, we construct $v(\x) := u(\cp(\x))$ to
obtain a function $v$ which is constant in the normal direction, and
thus satisfies the homogeneous Neumann boundary condition.  A spatial
differential operator applied to $v$ will then respect the
zero-Neumann condition automatically.

For a more general Neumann boundary condition,
$\partial_n u = g_1(\x)$ for $\x \in \gamma$, we (formally) perform a
finite difference in the normal direction to obtain
  $\frac{u(\x) - u(\cp(\x))}{\|\x - \cp(\x)\|_2}
  \approx u_n(\cp(\x)) = g_1(\cp(\x)).$
Rearranging to solve for $u(\x)$ we define the extension:
\begin{align*}
  v(\x) := u(\cp(\x)) + \|\x - \cp(\x)\|_2 \, g_1(\cp(\x)),
\end{align*}
Note as $\x \to \cp(\x)$, we have $v(\x) \to u(\cp(\x))$ so $u$ is
continuous at the boundary.  However, the extended solution is not
very smooth (it may have a corner at $\gamma$) and this leads to a
loss of numerical accuracy \cite{macdonald2011}.  Indeed
the above formula was constructed using first-order
finite differences; we can improve the formal order of accuracy to
at least two by using a centered difference \cite{macdonald2011}.

\subsubsection{Second-order accurate boundary extensions: Neumann}
\label{subsubsection:2nd_order_boundary_extension}

We construct a ``mirror point''
$\cpbar(\x) := \x + 2(\cp(\x) - \x) = 2\cp(\x) - \x$ which consists of
a point reflected across the boundary $\gamma$ \cite{macdonald2011}.
As above, we then apply centered differences around the point
$\cp(\x)$ and solve for $u(\x)$, in order to define
\begin{align*}
  v(\x) := u(\cpbar(\x)) + \|\x - \cpbar(\x)\|_2 \, g_1(\cp(\x)).
\end{align*}
Again we see continuity as $\x \to \cp(\x)$ but now we can expect the
extension to be smoother because instead of just $u(\cp(\x))$ we now
have information about \emph{how} $u(\cpbar(\x)) \to u(\cp(\x))$ is
included.

\subsubsection{Dirichlet boundary extensions}

The general Dirichlet boundary condition, that $u(\x) = g_2(\x)$ for
some specified function $g_2$, can be implemented by copying
the value of $g_2$ for points outside the domain using
  $v(\x) := g_2(\cp(\x))$,
but as before this is a low-accuracy approximation due to lack of
smoothness.  A more accurate extension comes from specifying that the
average value matches the given data
$\frac{1}{2}\big(v(\x) + u(\cpbar(\x)\big) = g_2(\cp(\x))$ from which
we define
\begin{align*}
  v(\x) := 2 g_2(\cp(\x)) - u(\cpbar(\x)),
\end{align*}
which differs from the Neumann case primarily by a change of sign in
front of $u(\cpbar(\x))$.

\subsubsection{Combinations of boundary conditions}
Combining these various extensions
we define an operator $E$ which extends solutions by
\begin{subequations}  \label{eq:extension_bcs}
\begin{align}
  v := E u + g \,,
\end{align}
where operator $E$ and functional $g$ are the homogeneous and
non-homogeneous parts of the extensions respectively:
\begin{align}
  v(\x) :=
  \begin{cases}
    u(\x)          &  \x \in \pdedomain \\
    u(\cpbar(\x)  &  \cp(\x) \in \gamma_{\textrm{n}} \\
    -u(\cpbar(\x)   &  \cp(\x) \in \gamma_{\textrm{d}}
  \end{cases}
  +
  \begin{cases}
    0  &  \x \in \pdedomain, \\
    \|\x - \cpbar(\x)\|_2 \, g_1(\cp(\x))  &  \cp(\x) \in \gamma_{\textrm{n}}, \\
    2 g_2(\cp(\x)) &  \cp(\x) \in \gamma_{\textrm{d}},
  \end{cases}
\end{align}
\end{subequations}
where $\gamma_{\textrm{n}}$ and $\gamma_{\textrm{d}}$ indicate boundaries
with Neumann and Dirichlet conditions respectively.
Although not needed here, all of the above constructions can also be
applied on curved surfaces embedded in $\Real^3$ or higher and of
arbitrary codimension~\cite{macdonald2011}.

\subsubsection{Discretizations of extensions}

Although some of the above extensions were motivated by finite
differences, they are \emph{not} discrete because $\cp(\x)$ and
$\cpbar(\x)$ are not generally grid points (due to the curved boundary
$\gamma$).
One way to discretize is to use a polynomial interpolation scheme to
approximate $u(\cpbar(\x))$ using a \emph{stencil} of grid points
neighboring $\cpbar(x)$.  The typical choice is a $4 \times 4$ grid
which allows bicubic interpolation \cite{ruuth2008}.  Equivalently, we
can use the sample values of $u$ at those same 16 points to build a
bicubic polynomial which approximates $u$; we then compute the exact
extension of that polynomial.

Some of these stencils will contain points outside of $\pdedomain$.
This is not a problem because all functions will be defined over
$B(\pdedomain)$.  That is, we do not really have $u$ and $v$, only
$v : B(\pdedomain) \to \Real$.  What is crucial however is that all
discrete stencils lie inside $B(\pdedomain)$; this is how we define
the computational domain: the set of all grid points $\x$ such that
the stencil around $\cpbar(\x)$ is contained in the
set~\cite{macdonald2009}.

\subsection{Imposing boundary conditions with a penalty}

We wish to spatially discretize the PDE \eqref{MFPT_timedep_rev} using
finite differences and standard time-stepping schemes.  A systematic
procedure is needed to ensure that $v$ remains an appropriate
extension so that such a computation respects the boundary conditions.
The approach of \cite{vonGlehnMarzMacdonald:cpmol} modifies the
problem by introducing a penalty for change that does not satisfy the
extension.  Ignoring the time-periodic condition $u(\x, 0) = u(\x, T)$
for the moment, the idea is that we want to solve
\begin{subequations}   \label{eqn_and_const}
\begin{align}  \label{eqn_and_consta}
  v_t &= D \nabla^2  v + 1, \qquad \x \in \pdedomain,
\end{align}
subject to the constraint that
\begin{align}  \label{eqn_and_constb}
  v &= E v + g, \qquad \x \in B(\pdedomain), \,\,
      \mbox{and for all relevant $t$.}
\end{align}
\end{subequations}
This system can be achieved by extending the right-hand side of
\eqref{eqn_and_consta}, introducing a parameter $\penaltyparam$, and
combining the two equations \cite{vonGlehnMarzMacdonald:cpmol} to give
\begin{align}  \label{penalty_eqn}
  v_t &= \bar{E} D \nabla^2  v + 1
        -\penaltyparam (v - E v - g), \quad \x \in B(\pdedomain), \,\,\,
        \text{and for all relevant $t$,}
\end{align}
where $\bar{E}$ is the closest point extension \eqref{cpext}.

\subsubsection{Method of lines discretization}

The extension operators can be discretized into matrices by collecting
the coefficients of the polynomial interpolant, e.g., using
Barycentric Lagrange Interpolation \cite{macdonald2009}.  This allows
us to write \eqref{eq:extension_bcs} as
\begin{align*}
  \v{v} := \matrix{E}_h \v{u} + \v{g}\,,
\end{align*}
where $\v{v}$ is a long vector of the pointwise samples of the
function $v$ at the grid points in the computational domain.  We use a
uniform grid of $B(\pdedomain)$ with grid spacing $h = \Delta x$.  The
Laplacian operator is replaced by a square matrix $\matrix{L}_h$ where
each row consists of \newcommand{\hack}{\rule{0pt}{1.3ex}}
$\left\{\tfrac{1}{h^2\hack}, \tfrac{1}{h^2\hack},
  \tfrac{-4}{h^2\hack}, \tfrac{1}{h^2\hack},
  \tfrac{1}{h^2\hack}\right\}$ and many zeros.  Combining these
spatial operators, we then discretize \eqref{penalty_eqn} using the
method of lines to obtain an ODE system
\begin{align} \label{mol_penalty_eqn}
  \v{v}_t &= \bar{\matrix{E}}_h D
  \matrix{L}_h \v{v} + \mathbf{1} -\frac{4D}{h^2} \Big( \v{v} -
  \matrix{E}_h \v{v} - \v{g} \Big)\,, \qquad \text{for all relevant
    $t$\,,}
\end{align}
where we have used $\penaltyparam = \frac{2 \text{dim}}{h^2}D$ as recommended
by \cite{vonGlehnMarzMacdonald:cpmol}.  We can then apply forward
Euler, backward Euler or some other time-stepping scheme to
\eqref{mol_penalty_eqn} using discrete time-step size of $\Delta t$.
For example, backward Euler would be
\begin{align}  \label{backward_euler}
  \frac{\v{v}^{n+1} - \v{v}^n}{\Delta t}
  &=
  \left[
    D \bar{\matrix{E}}_h \matrix{L}_h
    - \frac{4D}{h^2} \Big( \matrix{I} - \matrix{E}_h \Big)
  \right] \v{v}^{n+1}
  + \frac{4D}{h^2}  \v{g} + \mathbf{1}\,,
\end{align}
where $\v{v}^{n}$ is a vector of the approximate solution at each grid
point at time $t = n \Delta t$.

\subsubsection{Elliptic solves}

The elliptic problem \eqref{MFPT_EllipticalMFPT} can be discretized in
a similar way \cite{ChenMacdonald:ellipticCPM} using the penalty
approach.  We obtain the discretization
\begin{subequations} \label{elliptic_penalty_eqn}
\begin{equation}
   D \bar{\matrix{E}}_h \matrix{L}_h \v{v}
  -\frac{4D}{h^2} \Big( \v{v} - \matrix{E}_h \v{v} - \v{g} \Big)
  + \Big(
     \matrix{S}_1 \matrix{D}_h^x \v{v} +
     \matrix{S}_2 \matrix{D}_h^y \v{v}
  \Big)
  + \mathbf{1} = \v{0}\,,
\end{equation}
where $\matrix{D}_h^x$ and $\matrix{D}_h^y$ are centered differences
using weights $\left\{-\tfrac{1}{2h}, 0, \tfrac{1}{2h}\right\}$, and
$\matrix{S}_1$ and $\matrix{S}_2$ are diagonal matrices with the local
advection vector coefficients $s_1(x,y)$ and $s_2(x,y)$, extended by
\eqref{cpext}, on the diagonal.
For our specific problem~\eqref{MFPT_EllipticalMFPT}, we have
\begin{equation}
  s_1(x,y) = \omega r \cos \theta,  \,\,
  s_2(x,y) = -\omega r \sin \theta, \,\, \mbox{where} \,\,
  r^2 = x^2 + y^2, \,
  \theta = \tan^{-1}\!\left(\tfrac{y}{x}\right).
\end{equation}
\end{subequations}
If $\omega$ is large, upwinding differences should be used for the
advection.

\subsection{Relaxation to a time-periodic solution}\label{sec:relax}

In our moving trap problem \eqref{MFPT_timedep_rev}, the traps
$\Omega_i(\x_i(t))$ are moving, and thus the domain $\pdedomain$ is
changing over time.  This means the discretization operators
$\matrix{E}_h$ and $\bar{\matrix{E}_h}$ are changing at each time step.
At least in principle
the grid itself could also change although for simplicity of
implementation we include all grid points in the interior of the small
traps (even if not strictly needed).  We assume that the
traps do not move too far per timestep---not more than one or two
grid points---to avoid large discretization errors.

In our moving domain problems, the period $T = {2 \pi/\omega}$ of the
motion is known and we look for solutions which satisfy the
time-periodic boundary condition $u(\x,0) = u(\x,T)$.  An
``all-at-once'' discretization of both space and time simultaneously
could be prohibitive in terms of memory usage.
Instead, we approach this problem using a ``shooting method'':
we solve an initial value problem from a somewhat arbitrary initial
guess at $t=0$ \emph{for many periods}.  Due to the dissipative nature
of the PDE, we expect this procedure to converge to a time-periodic
solution.

\subsubsection{Stopping criterion}\label{sec:stopping_criterion}

At the end of the $N$th period we compare the numerical solution at
$t = NT$ with that from $t = (N-1)T$.  We define a tolerance
\tol and stop the calculation when
  $\|\v{v}(NT) - \v{v}((N-1)T)\| \le \tol$,
in some norm; typically we use the change in the average MFPT as our
stopping criterion.

\subsection{Feature extraction}
\label{sec:viz}

Visualizing the solution can be accomplished by
coloring all grid points according to the numerical solution value,
with grid points outside the physical domain simply omitted.
We also need to extract features of the solution, such as the maximum
value, or the average over space and time from \S~\ref{sec:features}.
Spatial integrals of the solution can be extracted using quadrature
although care must be taken near the edges of the domain to ensure
second-order accuracy.  We use a modified quadrature weight
\cite{engquist2005discretization} to integrate the numerical solution over
a non-rectangular domain.  Temporal integration is done using
Trapezoidal Rule.

\section{Numerical computations for stationary trap problems}\label{StationaryTrap_Section}

In this section, the CPM is used to compute solutions for
some MFPT problems in 2-D domains with stationary traps. Moreover,
some stationary trap configurations that optimize the average MFPT are
identified numerically.

\subsection{MFPT for a concentric stationary trap in a disk}
We use the CPM to compute the MFPT for a Brownian particle
in the unit disk with a concentric stationary trap of radius
$\varepsilon = 0.05$.  The result is shown in
Figure~\ref{fig:convg_static_trap_with_epsilon:a}.  Based on the
figure colormap we observe the intuitive result that the MFPT is
smaller for particles that start closer to the trap than for those
that start farther away.

\subsection{Convergence Study}

We use the exact solution
$u(r) = \frac{1}{4}(\varepsilon^2 - r^2) + \frac{1}{2}\log(r/\varepsilon)$
for the MFPT to perform a convergence study of our
numerical method.
For several values of the trap radius $\varepsilon$, and various grid
spacings $\dx$, we numerically compute the MFPT.
The resulting $L_\infty$ error is shown in
Figure~\ref{fig:convg_static_trap_with_epsilon}.
As $\varepsilon$ decreases, the exact solution has a stronger gradient
owing to the logarithmic term.
This leads to a poorer convergence of the numerical solution.
Nevertheless, we observe second-order convergence of
the numerical solution as $\dx \to 0$, as expected from
\S~\ref{subsubsection:2nd_order_boundary_extension}.

\begin{figure}[tbp]
  \centering
  \makebox{%
    \raisebox{0.5ex}{\small{(a)}}
    \includegraphics[width=0.37\textwidth]{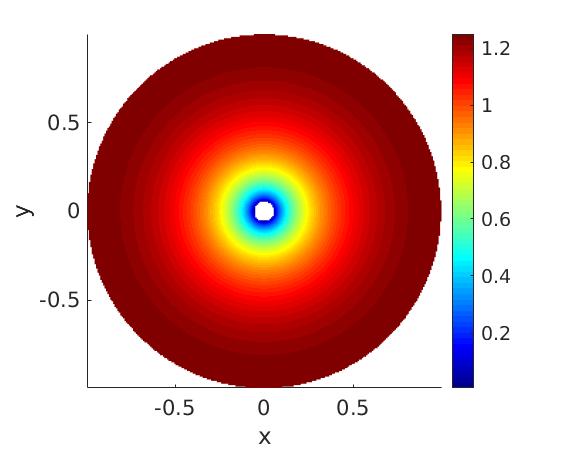}
    \phantomsubcaption
    \label{fig:convg_static_trap_with_epsilon:a}
  } \qquad \makebox{%
    \raisebox{0.5ex}{\small{(b)}}
    \includegraphics[width=0.36\textwidth]{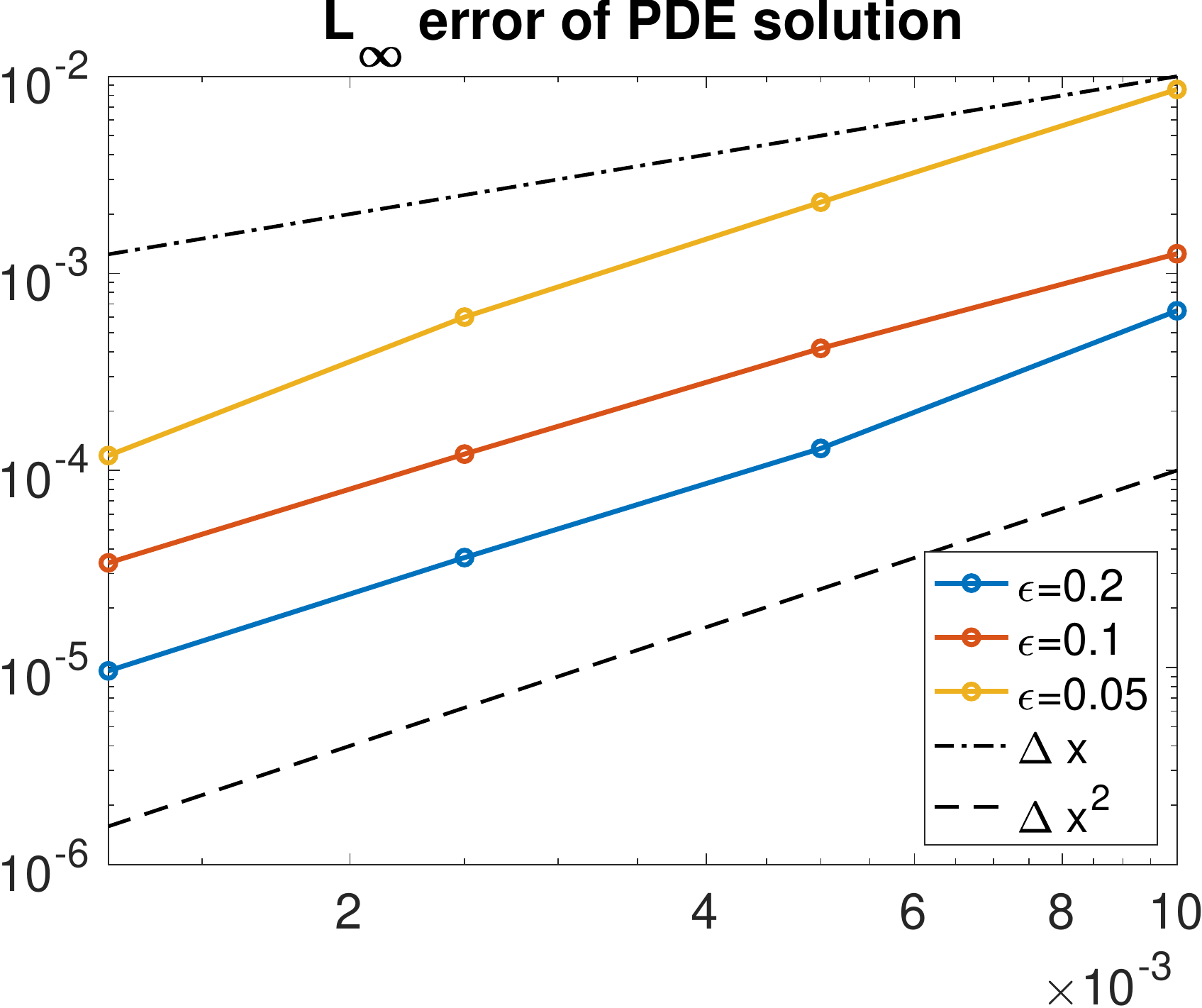}
    \phantomsubcaption
    \label{fig:convg_static_trap_with_epsilon:b}
  }
  \vspace*{-1ex}
  \caption{Convergence studies on the punctured unit disk
      for various values of the trap radius $\varepsilon$, confirming
      second-order convergence of our elliptic solver. (a) MFPT, with
      colormap indicating the time for capture starting at $\x$. (b)
      $L_\infty$ error versus $\Delta x$.
    }
  \label{fig:convg_static_trap_with_epsilon}
\end{figure}

Next, we study the accuracy of the numerical quadrature
$I_h = \sum_{i,j} \omega_{i,j} u^h_{i,j}$ of the numerical solution
$u^h$ on rectangular grid.  The trivial weight $\omega_{i,j} = 1$ is
only first order accurate.  We compare it with second-order accurate
modified weight~\cite{engquist2005discretization} by computing the
area of the perforated domains shown in
Figure~\ref{fig:convg_integration}.  The convergence study in
Figure~\ref{fig:convg_integration:c}, shows that the convergence rate
using the trivial weight is only first order, with an error
significantly larger than the mesh size $\dx$.  However, by using the
modified weight for numerical integration, we observe a second-order
convergence rate in both examples.

\begin{figure}[tbp]
  \centerline{%
    \scriptsize{(a)}%
    \includegraphics[width=0.25\textwidth]{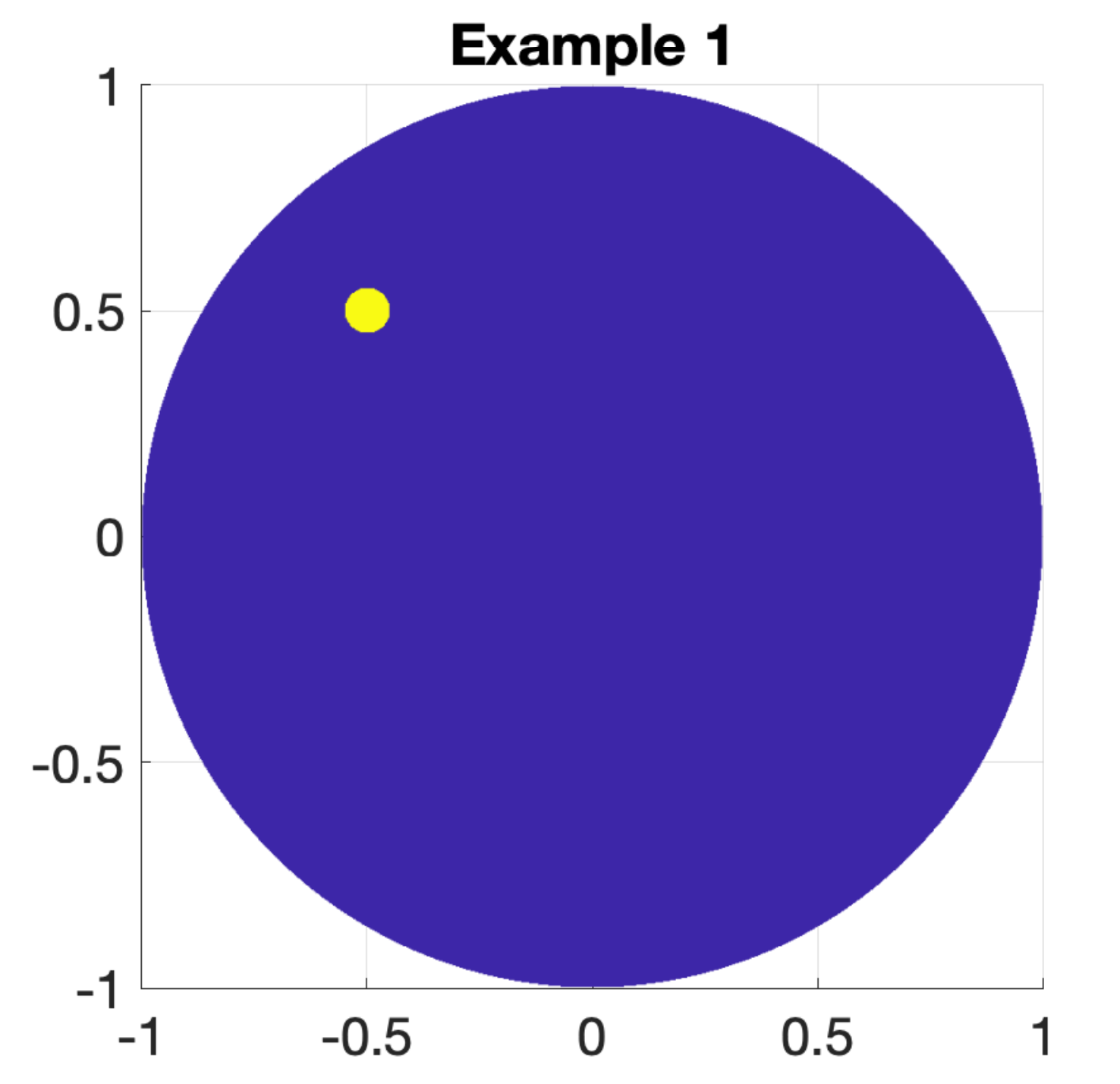}
    \phantomsubcaption
    \label{fig:convg_integration:a}
    \hfill
    \scriptsize{(b)}%
    \includegraphics[width=0.25\textwidth]{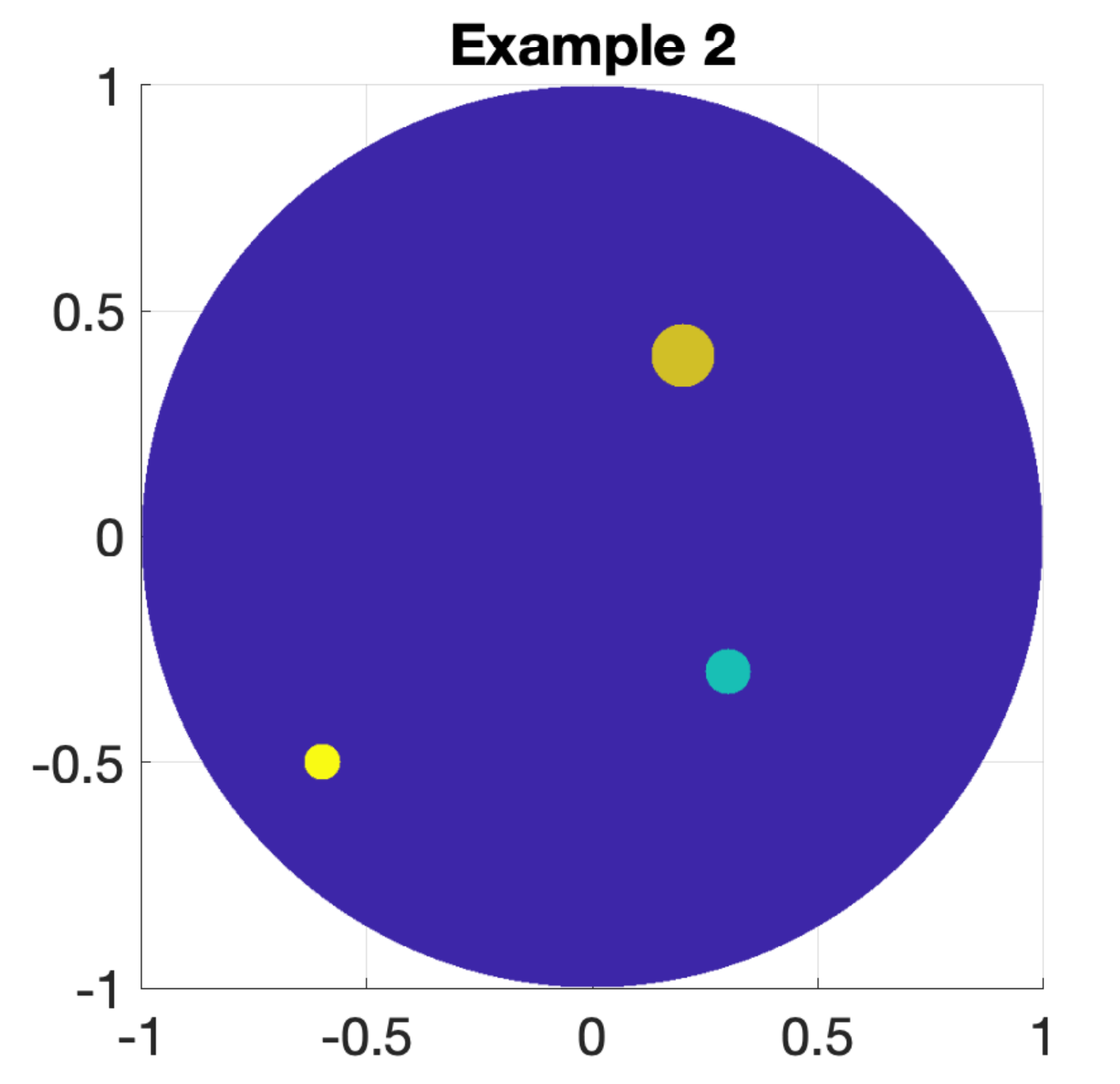}
    \phantomsubcaption
    \label{fig:convg_integration:b}
    \hfill
    \scriptsize{(c)}%
    \raisebox{-2ex}{%
    \includegraphics[width=0.34\textwidth]{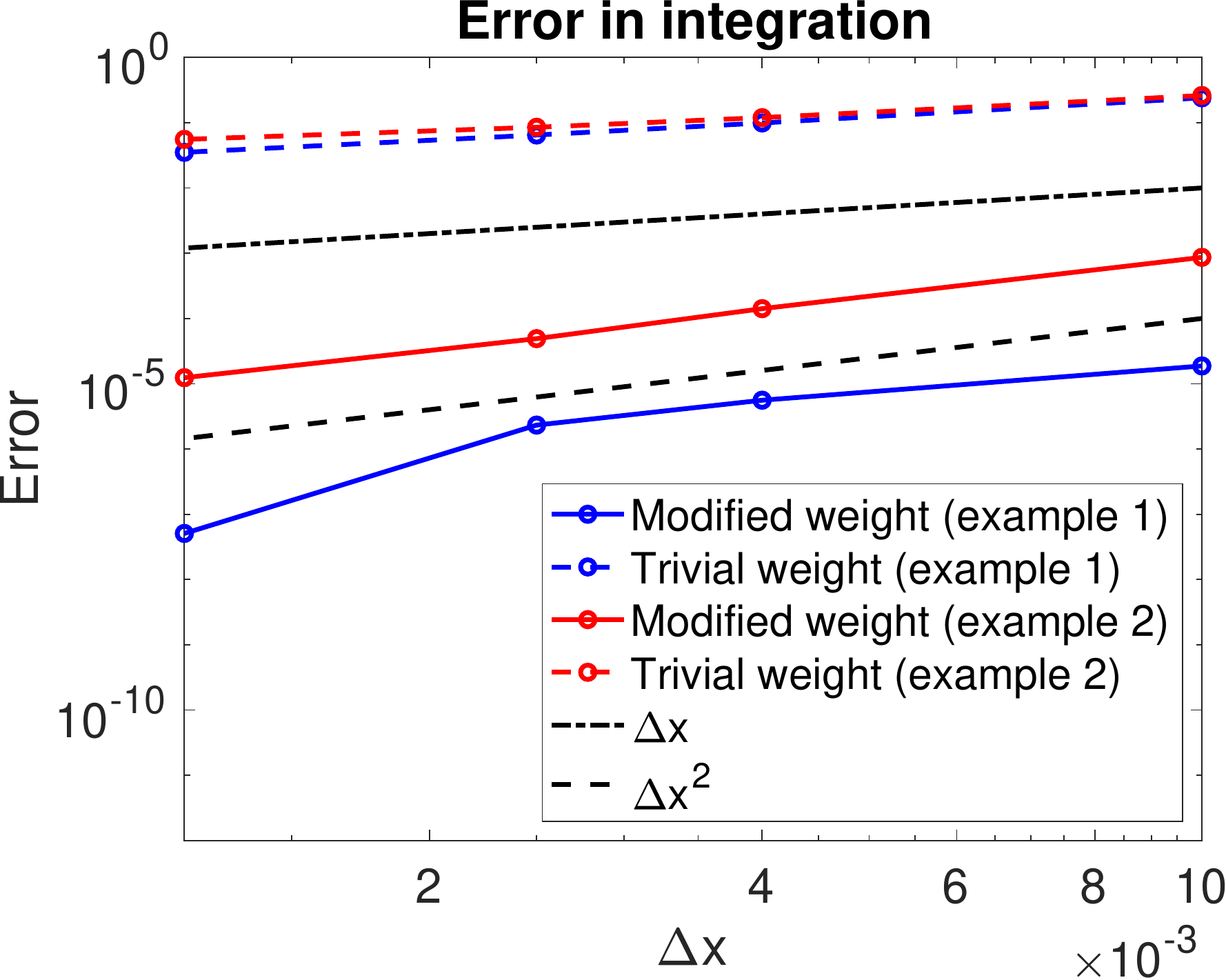}}
    \phantomsubcaption
    \label{fig:convg_integration:c}
  }
  \vspace*{-1ex}
  \caption{Two examples of the unit disk perforated by circular
    traps. (a) one trap centered at $\mathbf{x}_1 = (-0.5,0.5)$
    with radius $\varepsilon_1=0.05$. (b) three traps centered at
    $\mathbf{x}_1 = (0.3,-0.3)$, $\mathbf{x}_2 = (0.2,0.4)$, and
    $\mathbf{x}_3 = (-0.6,-0.5)$, with radii
    $\varepsilon_1=0.05, \, \varepsilon_2=0.07$, and
    $\varepsilon_3=0.04$, respectively. (c) accuracy of the numerical
    integration to compute the trap-free areas for (a) and (b), using
    trivial and modified weights.
  }
  \label{fig:convg_integration}
\end{figure}

Having confirmed the numerical accuracy and convergence of the CPM, we
now consider more intricate problems where analytic solutions are not
available.  In certain cases, the novel asymptotic approaches
developed later in \S~\ref{sec:analysis} are used to compare with our
computational results.

\subsection{MFPT in a disk with traps arranged on a ring}
\label{Ring_NTraps}

We consider a pattern of $m \geq 2$ circular traps that are
equally-spaced on a ring of radius $0 < r < 1$, concentric within the
unit disk.  In \cite{kolokolnikov2005optimizing} it was shown using
asymptotic analysis that for each $m\geq 2$ there is a unique ring
radius $r_c$ that minimizes the average MFPT for this pattern. We now
validate this result numerically. To do so, we solve
\eqref{MFPT_DiskSation} for a given $m$ with many different possible
radii $r$.  The numerically optimal ring radius $r_c$ is taken as the
value of $r$ for which the average MFPT is minimized.  Specifically,
we discretized the ring radius $r$ with a resolution of
$\Delta r = 0.0001$.  For each discrete value of $r$, we solved for
the average MFPT using the CPM with numerical grid spacing
$\dx = 0.004$.  We then took $r_c$ as the minimum value over the
resulting discrete set.

\begin{figure}[htbp]
    \centering
    \begin{subfigure}[b]{0.45\textwidth}
      \centering
        \includegraphics[width=0.9\textwidth]{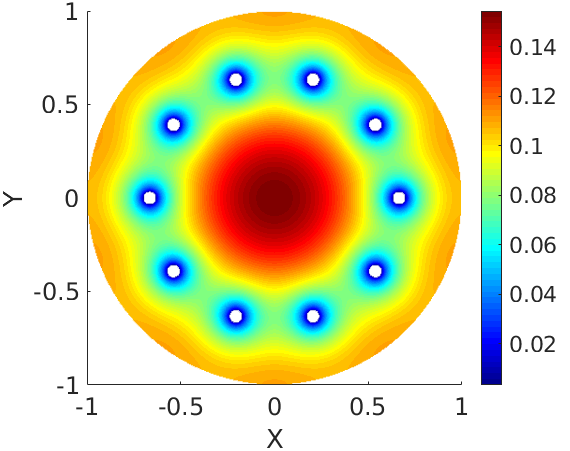}
        \caption{MFPT for the optimal 10 trap ring.}
        \label{Ring10}
    \end{subfigure}
    \hfill
    \begin{subfigure}[b]{0.45\textwidth}
      \centering
      \begin{tabular}{ c  c  c }
        \hline
    $m$ & Asymptotics & Numerics \\ \hline
	 2 & 0.4536 & 0.4533 \\
	 3 & 0.5517 & 0.5480 \\
	 4 & 0.5985 & 0.5987 \\
	 5 & 0.6251 & 0.6275 \\
	 6 & 0.6417 & 0.6411 \\
	 7 & 0.6527 & 0.6467 \\
	 8 & 0.6604 & 0.6609 \\
	 9 & 0.6662 & 0.6689 \\
	10 & 0.6706 & 0.6708 \\ \hline
  \end{tabular}
        \caption{Optimal ring radius $r_c$ for $m$ traps.}
        \label{RingRes}
      \end{subfigure}
      \vspace*{-3ex}
    \caption{The optimal ring radius $r_c$ for $m$ circular traps of
      radius $\varepsilon = 3 \times 10^{-3}$ that are equally-spaced
      on a ring concentric within a reflecting unit disk.  For each
      $m\geq 2$, the optimal radius $r_c$ minimizes the average MFPT
      for such a ring pattern of traps. (a) Optimal MFPT computed
      from the CPM with $m=10$. (b) Comparison of our
      numerical results with the asymptotic results obtained in
      \cite{kolokolnikov2005optimizing}. } 
    \label{Ntraps_Ring_config}
\end{figure}

Figure~\ref{Ring10} shows the MFPT for $m = 10$ traps on a ring with
the optimal radius $r_c = 0.6708$ computed by the procedure above.
The table in Figure~\ref{RingRes} shows a close comparison of our
numerical results with the asymptotic results obtained in
\cite{kolokolnikov2005optimizing}.

\subsection{Two stationary traps in an elliptical domain}\label{Static_2Trap_Ellipse}

Next, we consider the MFPT for a family of elliptical domains with
semi-minor axis $b$, with $b<1$, and semi-major axis $a={1/b}>1$ that
contains two circular absorbing traps of radius $\varepsilon$ centered
on the major axis. As $b$ is decreased from unity, an initial circular
domain gradually deforms into an elliptical region of increasing
eccentricity, with the area of the domain fixed at $\pi$. As $b$ is
varied, we will compute the optimal location of the traps that
minimize the average MFPT. For each fixed $b<1$, the centers of the
two traps are varied on the major axis with a step size of $0.01$, and
for each such configuration the average MFPT is computed.  The optimal
trap locations at the given $b$ correspond to where the average MFPT
is smallest.  The computations were done with a numerical grid spacing
of $\dx = 0.005$, and the semi-minor axis was decreased in steps of
$\Delta b = 0.02$.  Our numerical simulation predicts, as expected,
that the optimal locations of the two traps must be symmetric about
the minor axis. For the unit disk where $b=1$, our numerical results
yield that the optimal locations of the traps is at a distance
$x_0 = 0.450$ from the center of the disk.  This agrees with
computations in \S~\ref{Ring_NTraps} (see Figure~\ref{RingRes}) of a
two-trap ring pattern in a unit disk.

\begin{figure}[htbp]
    \centerline{
    \begin{subfigure}[b]{0.33\textwidth}
      \includegraphics[width=\textwidth]{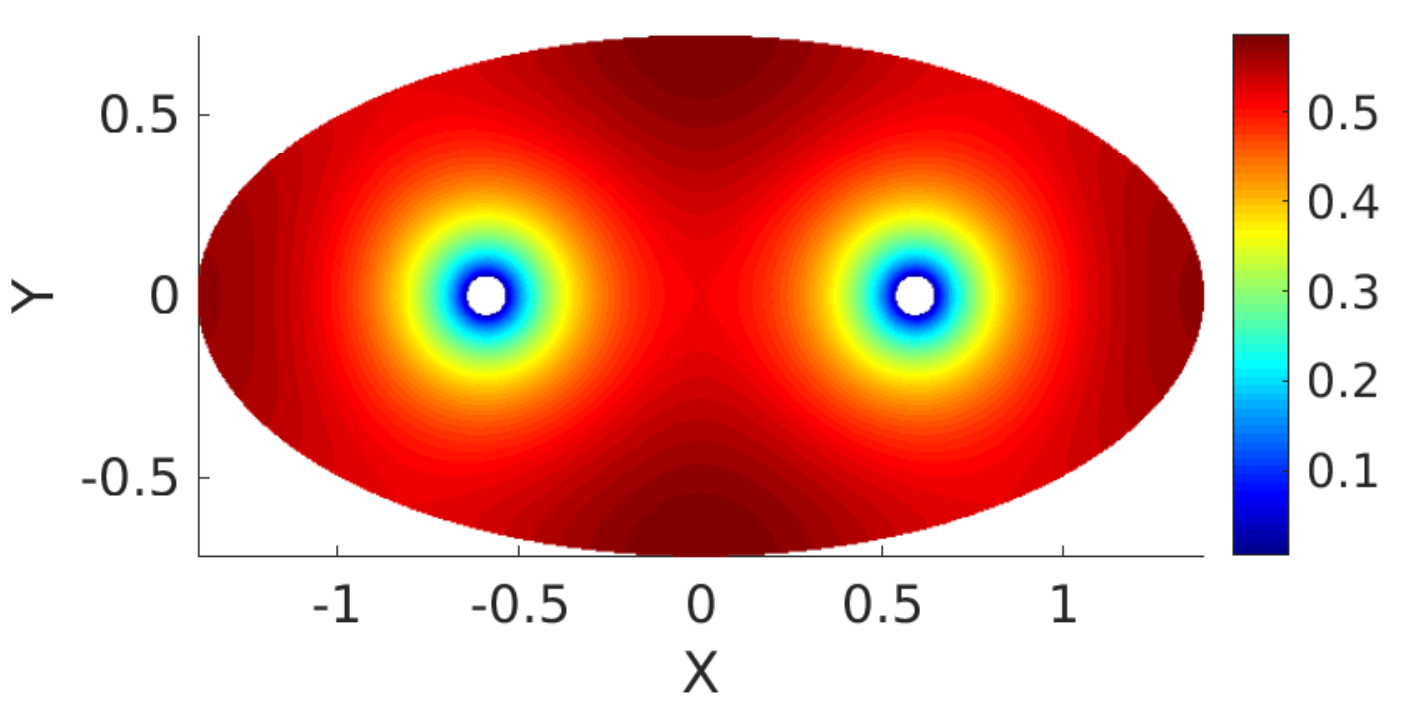}
      \caption{MFPT for optimal traps}
      \label{Ellpt_MFPT2}
    \end{subfigure}
    \begin{subfigure}[b]{0.33\textwidth}
        \includegraphics[width=\textwidth]{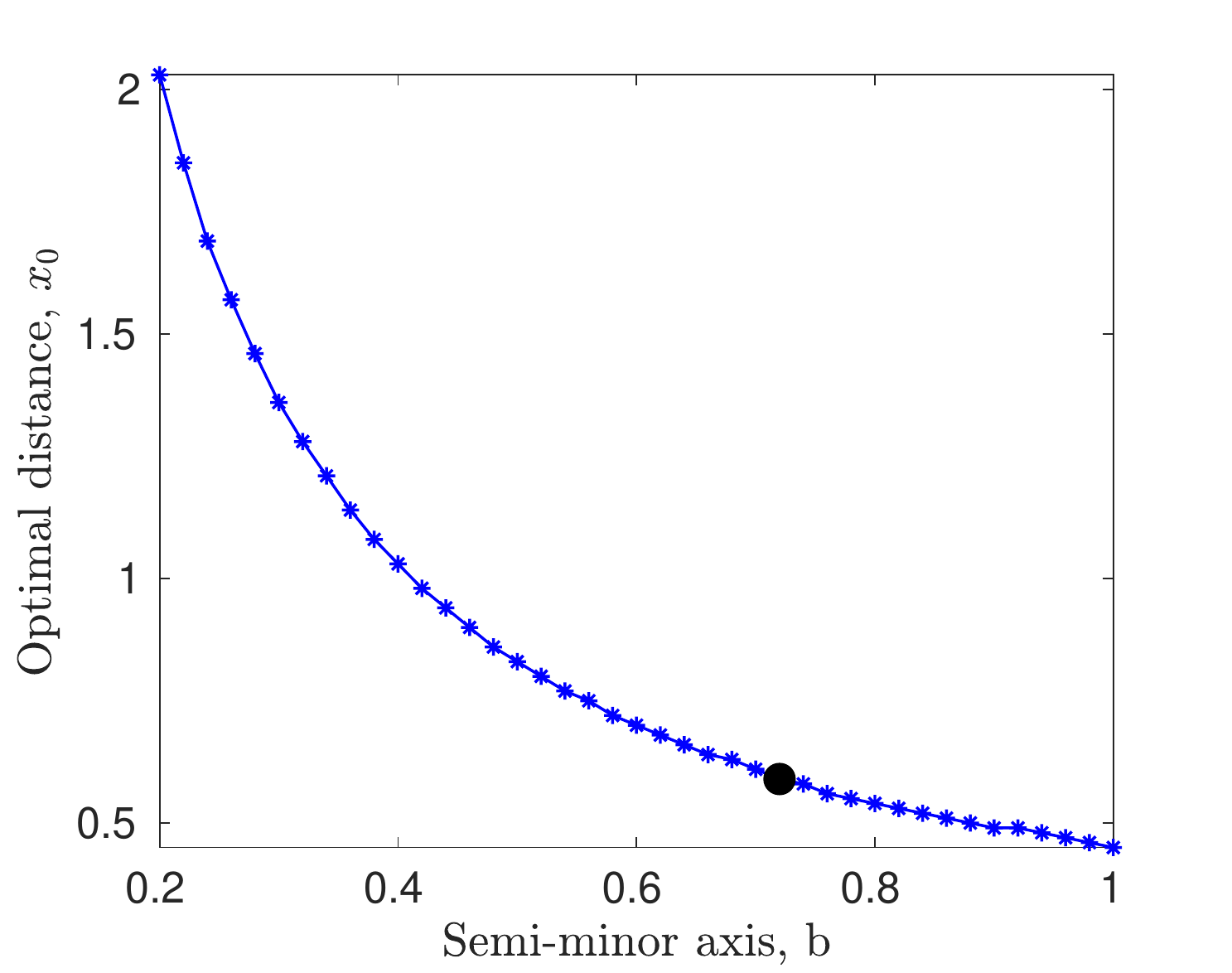}
        \caption{Optimal location of traps}
        \label{Ellpt_Opt_X0} 
    \end{subfigure}
    \begin{subfigure}[b]{0.33\textwidth}
        \includegraphics[width=\textwidth]{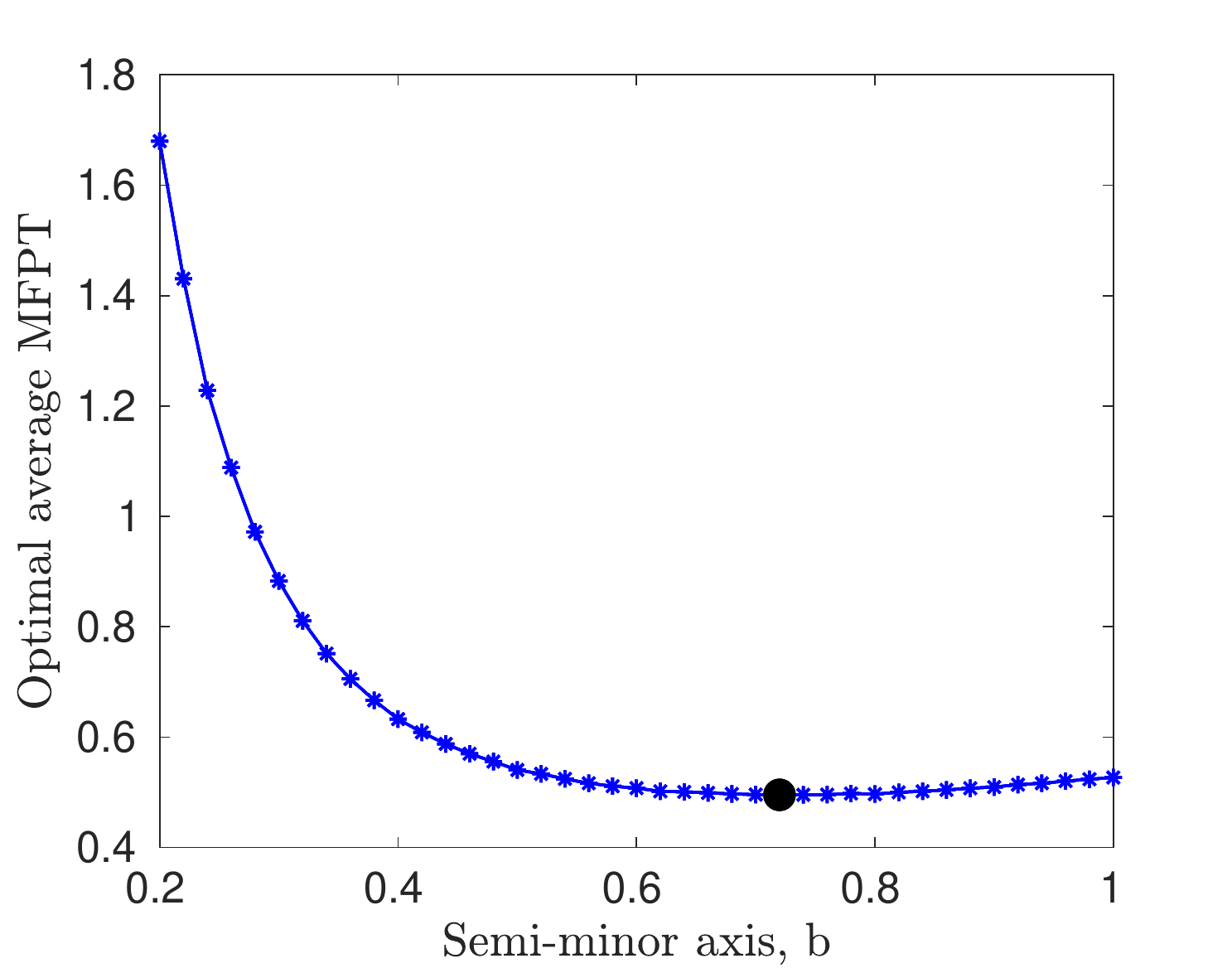}
        \caption{Optimal average MFPT}
        \label{Ellpt_Opt_MFPT}
    \end{subfigure}
    }
    \vspace*{-3ex}
    \caption{Two traps of radii
        $\varepsilon = 0.05$ on the major axis of an
        elliptical domain.  Left: with semi-major axis
        $a \approx 1.3889$ and semi-minor axis $b=1/a=0.72$, the optimal location
        for the traps are
        $(\pm 0.59,0)$. Middle: the optimal trap locations change as
        we shrink the minor axis.  Right: the average MFPT for
        optimal trap locations as the semi-minor axis is varied.  The
        dot is the globally minimal average MFPT
        $\overline{u}_{\textrm{opt}} = 0.4954$, over all ellipses of
        area $\pi$; it occurs in the configuration shown in~(a).
      }
    \label{Ellpt_MFPT_Opt}
\end{figure}

Figure~\ref{Ellpt_MFPT2} shows the MFPT for an elliptical region of
semi-major axis $a=1.3889$ and semi-minor axis $b=0.72$, with two
circular traps of radius $\varepsilon=0.05$ on its major axis centered
at $(\pm 0.59,0)$. These are the optimal locations of the traps for
this particular elliptical region.  Figures~\ref{Ellpt_Opt_X0}
and~\ref{Ellpt_Opt_MFPT} show the optimal locations of the traps and
the optimal average MFPT, respectively, as the semi-minor axis, $b$,
is decreased.  We observe from this figure that the optimal traps move
away from each other as $b$ decreases. This is because, as the
eccentricity of the ellipse increases, narrow regions at the two ends
of the major axis are created in which a Brownian particle can
``hide'' from the traps.  This effective ``pinning'' of particles by
the domain geometry increases their escape time.  In order to reduce
the escape time of such pinned particles---and thus the overall
average MFPT for the region---the traps need to move closer to the
ends of the major axis.

Figure \ref{Ellpt_Opt_MFPT} shows that as $b$ is decreased the optimal
average MFPT initially decreases until a global minimum
$\overline{u}_{\textrm{opt}} = 0.4954$ is reached at $b \approx
0.72$. This corresponds to traps that are at a distance $x_0 = 0.59$
from the center of the ellipse (see Figure~\ref{Ellpt_MFPT2} for the
MFPT of this pattern). This result suggests that the geometry that
gives the global minimum MFPT for the two-trap pattern is an
elliptical region with semi-major axis $a= 1.3889$ and semi-minor axis
$b=0.72$, and most notably is not the unit disk.  In
\S~\ref{sec:asymp_perturbed_unit_disk} we perform an asymptotic
analysis to determine the optimal MFPT and trap locations in near-disk
domains, which verifies that the global minimum of the MFPT is
\emph{not} attained by the unit disk but rather for a specific
elliptical domain. Moreover, in \S~\ref{sec:skinnyellipse} an
asymptotic approach based on thin domains is used to predict the
optimal trap locations and optimal average MFPT when $b\ll 1$.

\subsection{Three stationary traps in an ellipse}\label{Static_3Trap_Ellipse}

From \cite{kolokolnikov2005optimizing} a ring pattern of
three equally-spaced traps provides the optimal three-trap
configuration to minimize the average MFPT in the unit
disk. However, it is more intricate to determine the optimal
three-trap pattern in an elliptical domain. To do so numerically, we
employ the \textsc{Matlab} built-in function \texttt{particleswarm} for particle
swarming optimization (PSO)~\cite{kennedy2010}, to compute a local
minimum of the MFPT for an elliptical domain
$\frac{x^2}{a^2} + \frac{y^2}{b^2} = 1$ with $a=1.1$ and
$b={10/11}$. This optimal configuration is shown in the left panel
of Figure~\ref{fig:three_traps_ellipse_pso}. We use this
optimization result to initialize the numerical computation of local
minima of MFPT with the \textsc{Matlab} built-in function \texttt{fmincon} for
other values of $a$.
For $1.1 \leq a \leq 2$, and fixing the area of the ellipse at $\pi$,
in the right panel of Figure~\ref{fig:three_traps_ellipse_pso}
we plot the area of the triangle
formed by the numerically optimized locations of the three
traps. This figure shows that the three traps becomes colinear as
$a$ is increased.  In \S~\ref{sec:long_thin}, an asymptotic analysis,
tailored for long thin domains, is used to predict the optimal
locations of these three colinear traps for $a\gg 1$.

\begin{figure}[htbp]
\includegraphics[height=18ex]{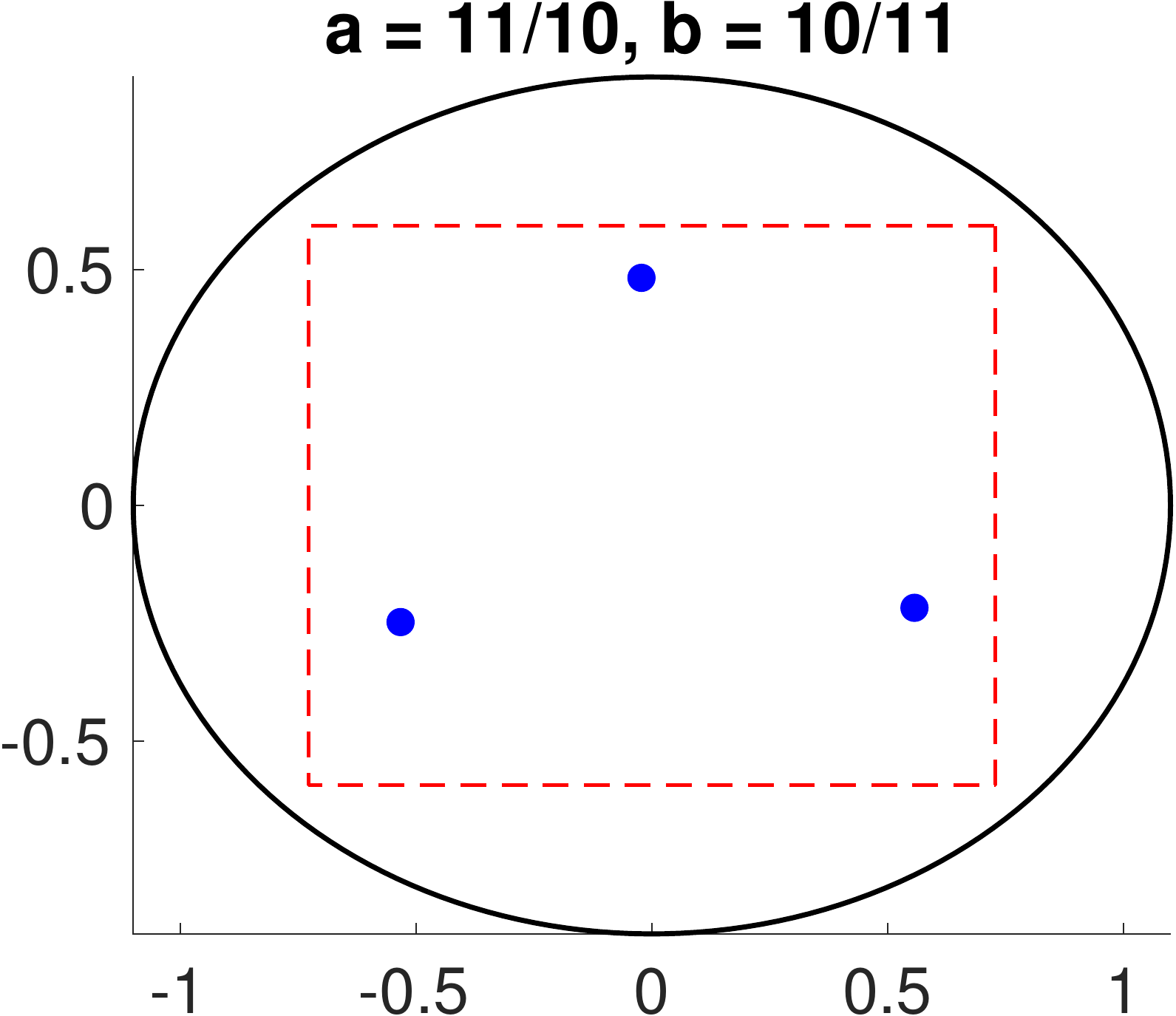}
\hfill
\includegraphics[height=14ex]{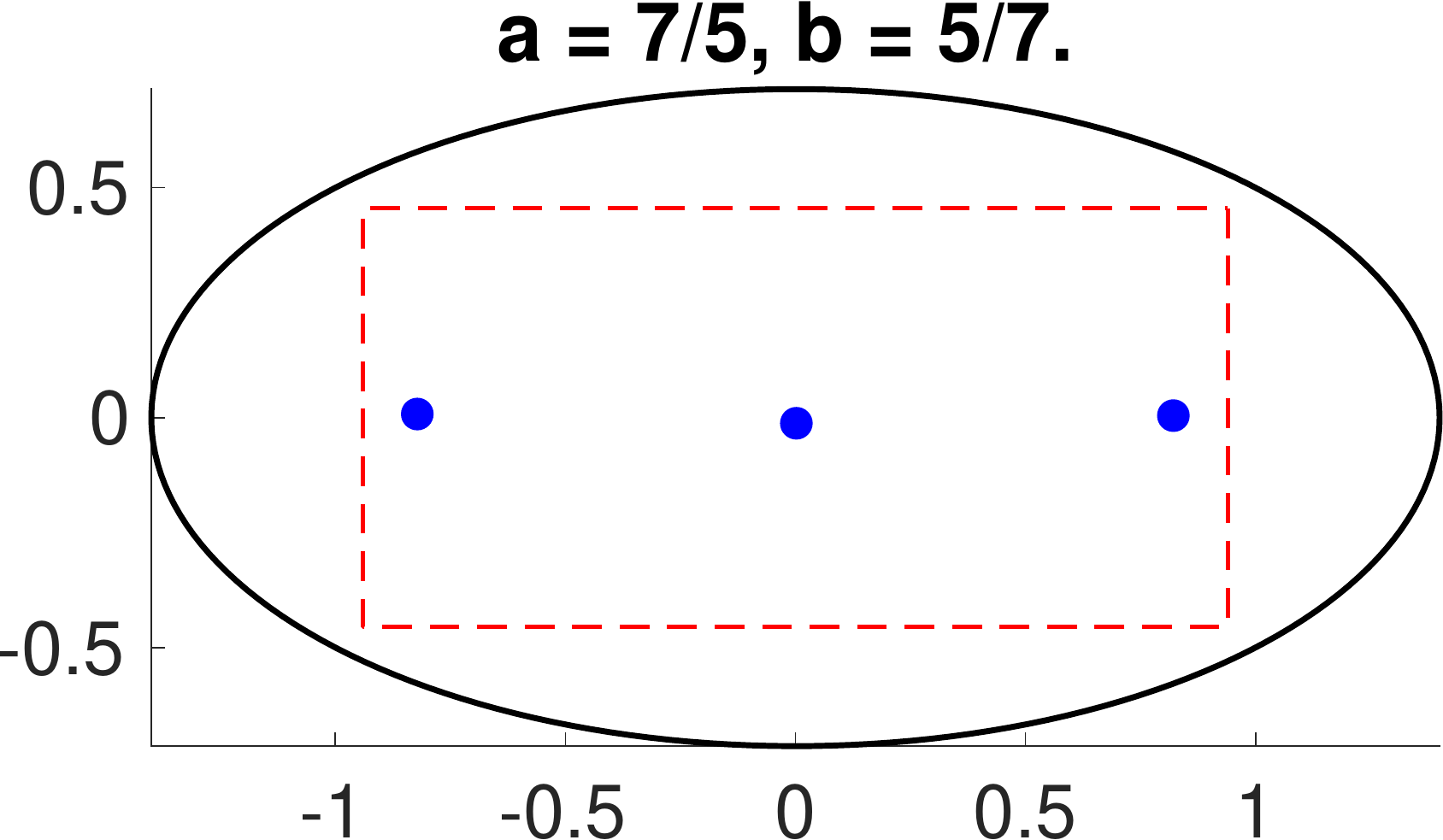}
\hfill
\raisebox{-1.4ex}{%
  \includegraphics[width=0.32\textwidth]{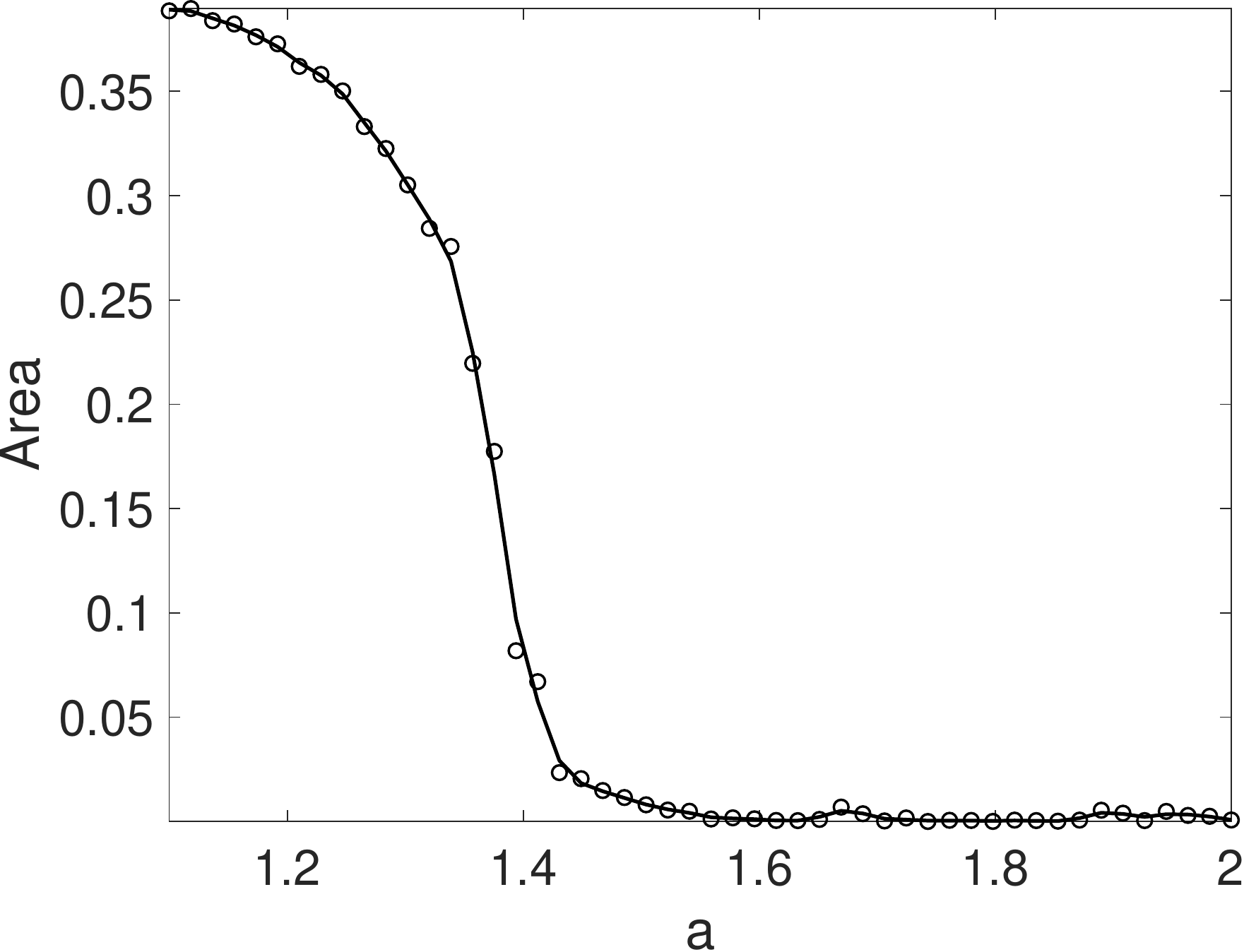}
}
\vspace*{-1ex}
\caption{The CPM and PSO is used to numerically compute local
  minimizers of the MFPT for three trap patterns in a one-parameter
  family of ellipses $(\frac{x}{a})^2 + (\frac{y}{b})^2 = 1$ with trap
  radius $\eps=0.05$, $1.1 \le a \le 2$ and $b={1/a}$. The right panel is for the area of the triangle formed by the three traps, which shows that the
  optimal traps become colinear as $a$ increases. The red dashed
  rectangles show the bounds used for PSO.}
\label{fig:three_traps_ellipse_pso}
\end{figure}

\subsection{Traps in star-shaped domains}\label{ThreeStarShapedDomain}

We briefly investigate the MFPT for multiple static traps in
a star-shaped domain, defined as the region bounded by
\begin{equation}
  r = 1 + \sigma \cos(\mc{N} \theta)\,, \quad 0 < \theta < 2\pi\,, \quad
  0<\sigma < 1\,,
\end{equation}
where $(r,\theta)$ are polar coordinates. Here $\mc{N}$ is a positive
integer that determines the number of folds in the domain boundary. We
use the CPM together with particle swarm optimization
\cite{kennedy2010} to numerically compute a local minimizer of the
MFPT for two specific examples.
In Figure~\ref{fig:3and4star} we show the optimal MFPT and trap locations
for a three-trap pattern in a three-fold star-shaped domain ($\mc{N}=3$) and
for a four-trap pattern in a four-fold star-shaped domain ($\mc{N}=4$).
In our asymptotic analysis of the optimal MFPT
in near-disk domains in \S~\ref{sec:asymp_perturbed_unit_disk} we will
predict the optimal trap locations when $m=\mc{N}$ and $\sigma\ll
1$. For $\sigma\ll 1$, we will show that the optimal trap locations
are aligned on rays where the boundary deflection is at a maximum.

\begin{figure}[htbp]
  \includegraphics[width=0.24\textwidth]{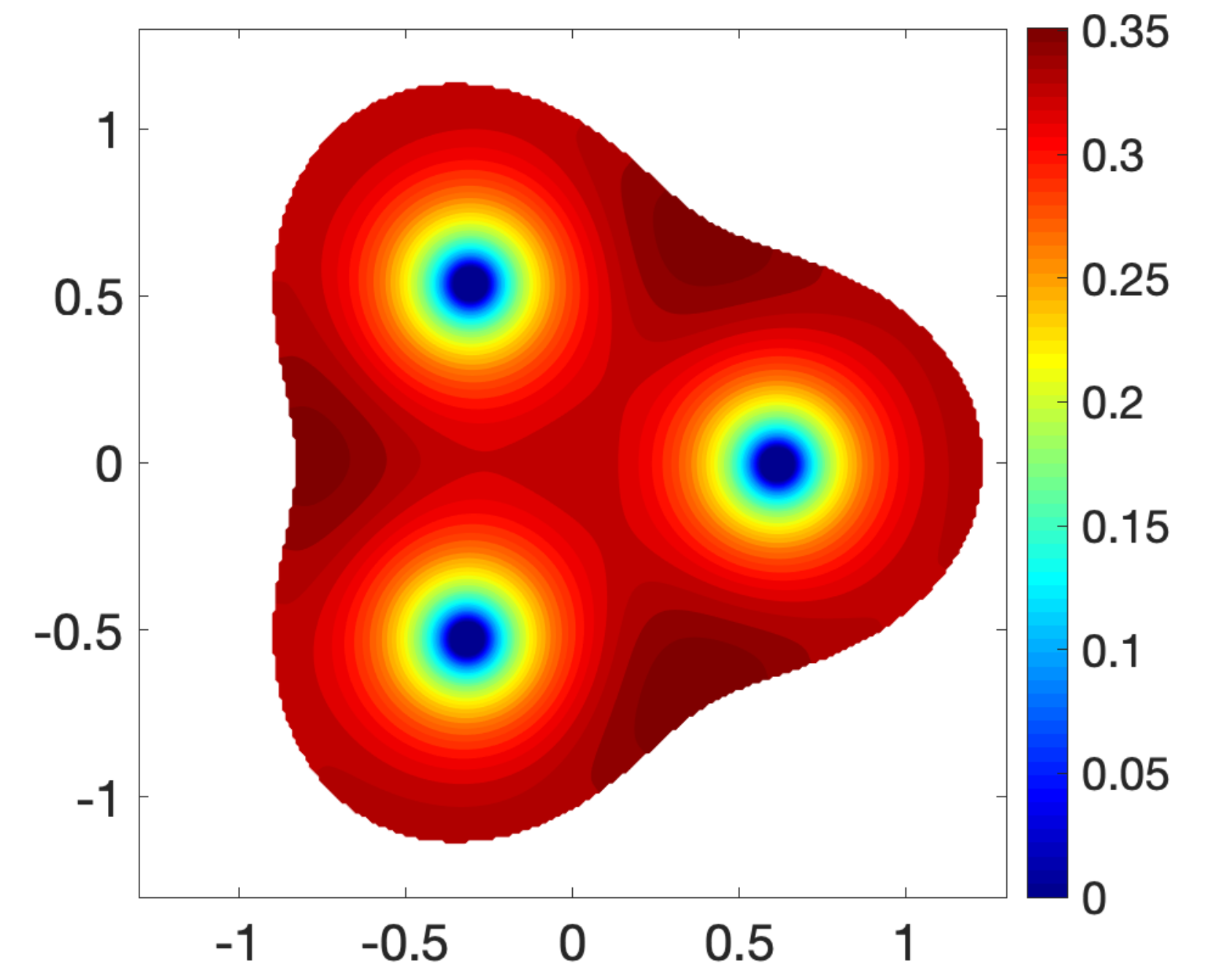}
  \includegraphics[width=0.24\textwidth]{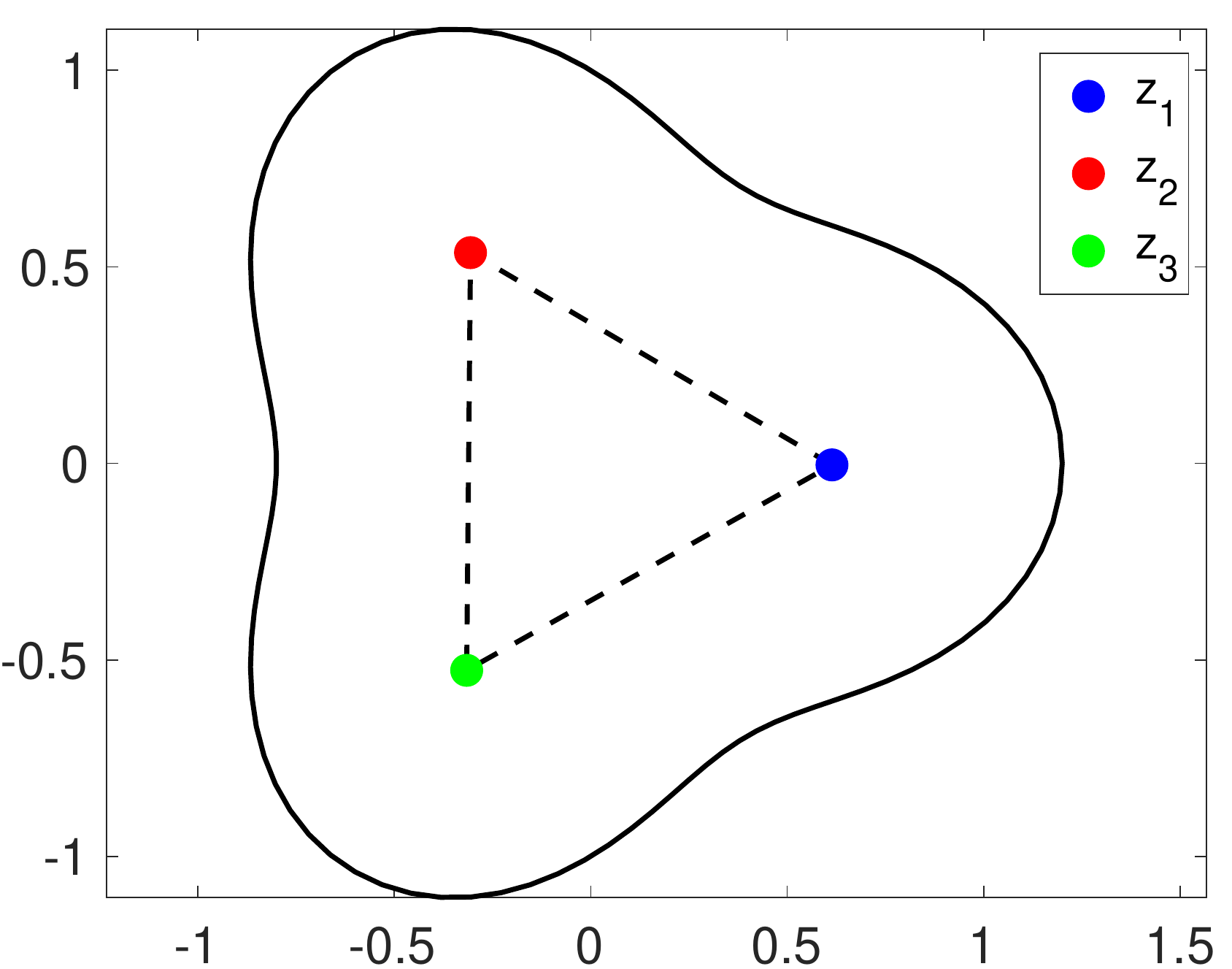}
  \hfill
  \includegraphics[width=0.24\textwidth]{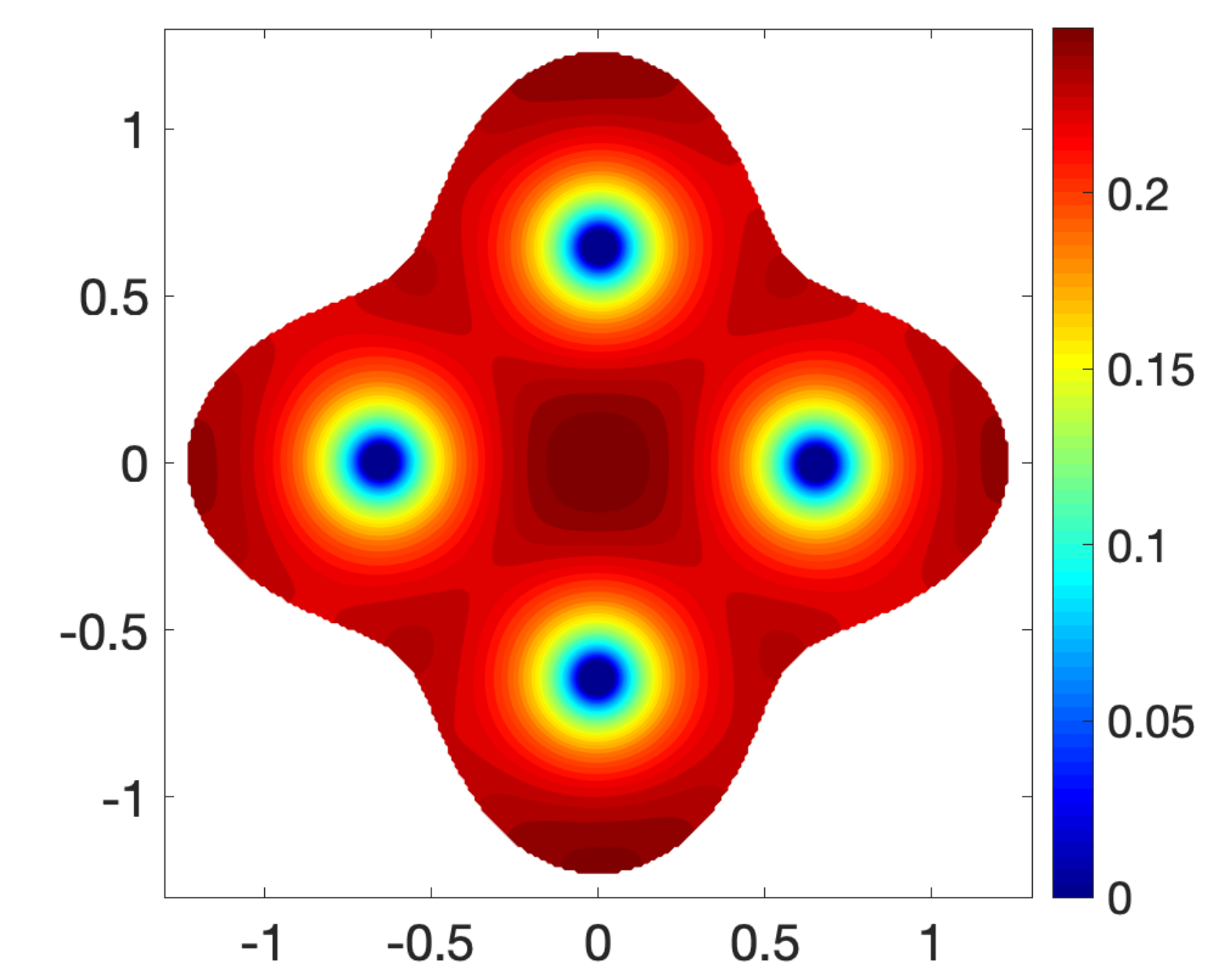}
  \includegraphics[width=0.24\textwidth]{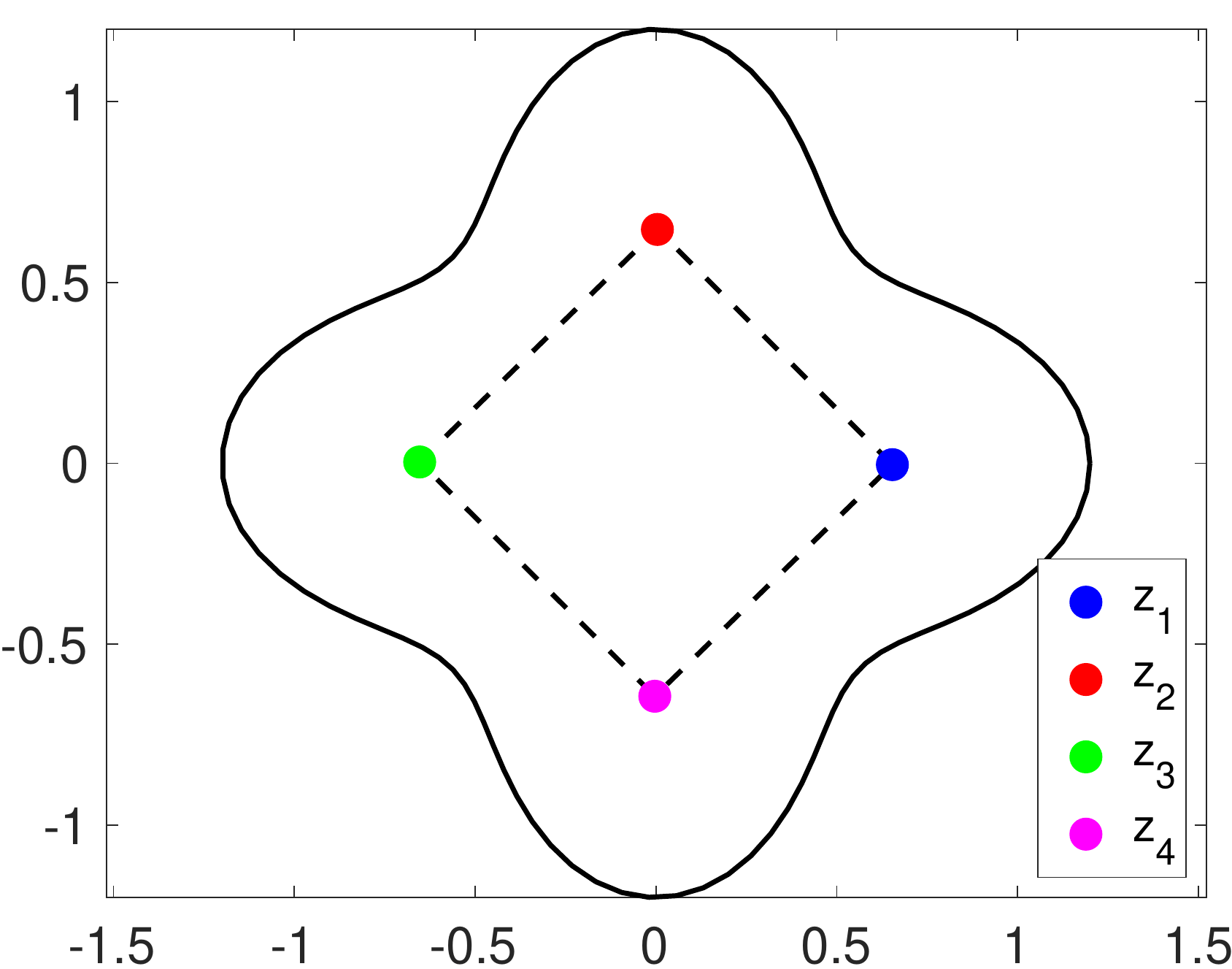}
  \caption{Numerically computed optimal $\mc{N}$-trap patterns
    in $\mc{N}$-fold star-shaped domains, found by PSO.
    Left two: PDE solution and optimal locations for $\mc{N}=3$;
    the optimal locations form an equilateral triangle on the
    circle of radius approximately $0.615$, to within a numerical
    error of $0.005$.
    Right two: $\mc{N}=4$; the square has vertices on the circle of radius approximately $0.65$.
    Here $\sigma=0.2$ and trap radii are $\varepsilon=0.05$.
  }
  \label{fig:3and4star}
\end{figure}

\section{Numerical computation for moving trap problems}
\label{MovingTrap_Section}

In this section, we will consider several problems for a Brownian
particle in a domain with moving traps.

\subsection{Convergence study}
\label{sec:moving_conv_study}

We first study the rate of convergence of our time relaxation approach
discussed in \S~\ref{sec:relax}. Consider the unit disk with a trap
moving in a circular path concentric within the disk at a fixed radius
$r_0 = 0.6$ from the origin.  At period $N$ of the algorithm, using
the notation in \S~\ref{sec:stopping_criterion}, we compute residual
$ \|\v{v}(NT) - \v{v}((N-1)T)\|_{L_2}$.  We study the rate of
convergence of the residual under different choices of mesh size
$\Delta x$, the radius of the trap $\varepsilon$, and the rotation
speed $\omega$.  In Figure~\ref{fig:convg_time_relaxation} we show that
the number of cycles for convergence is of $\mathcal{O}(1)$ and, in
particular, is independent of the mesh size $\Delta x$.  This figure
shows that the key factors that determine the rate of convergence are
the trap radius $\varepsilon$ and the angular frequency $\omega$ of
the circular trajectory of the trap. We use Forward Euler timestepping in these
numerical convergence studies.

\begin{figure}[htbp]
  \scriptsize{(a)}%
  \includegraphics[width=0.27\textwidth]{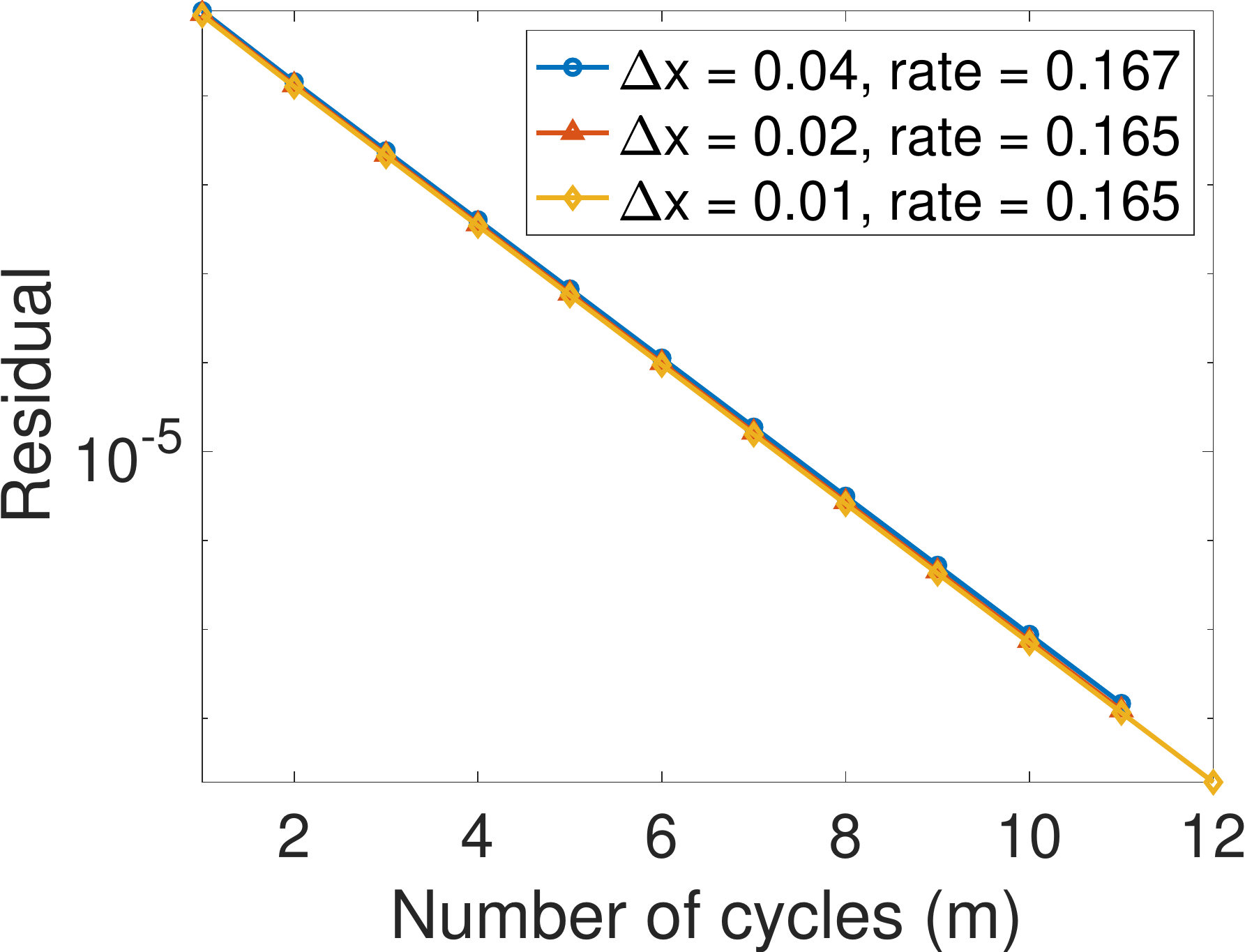}
  \hfill
  \scriptsize{(b)}%
  \includegraphics[width=0.27\textwidth]{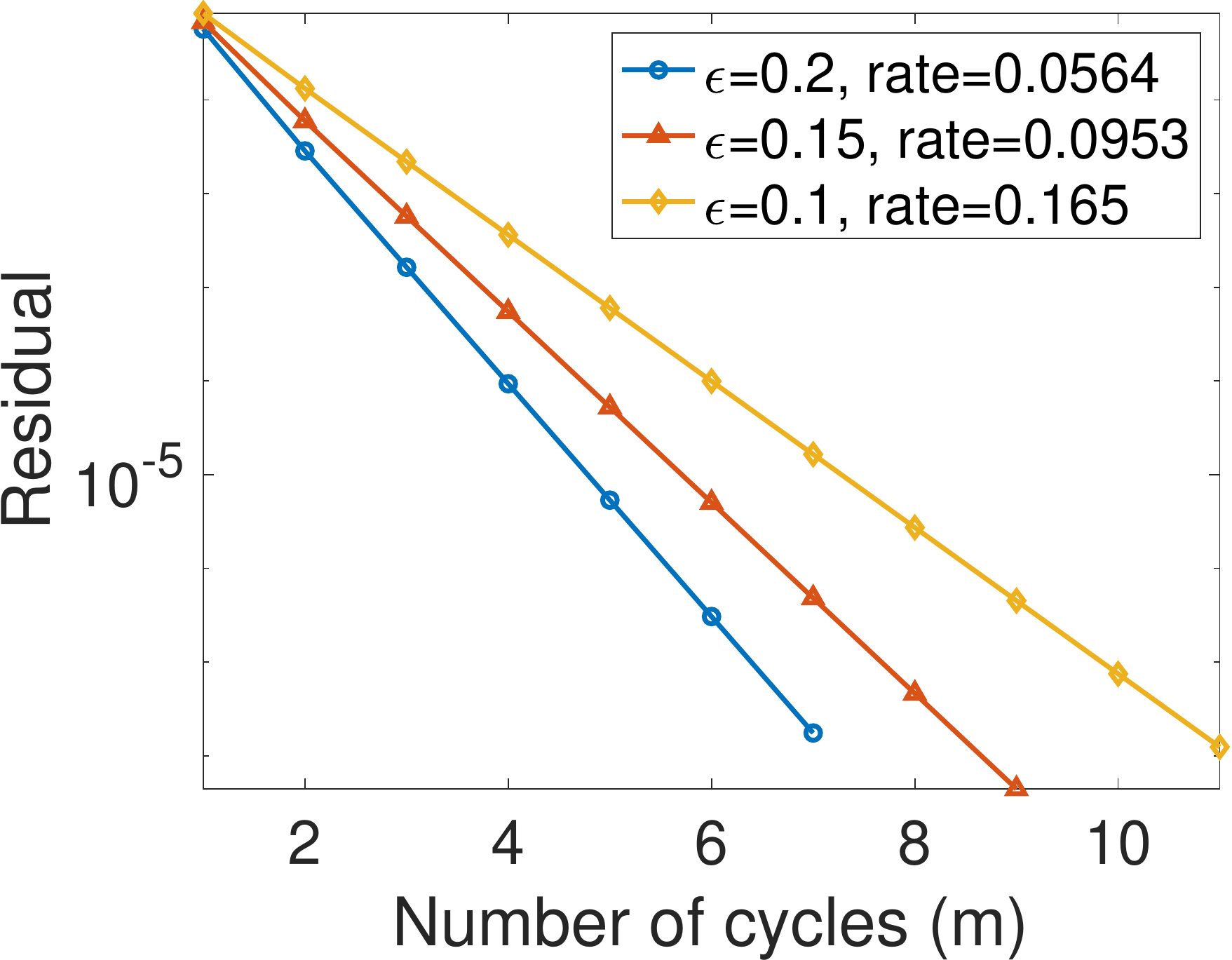}
  \hfill
  \scriptsize{(c)}%
  \includegraphics[width=0.27\textwidth]{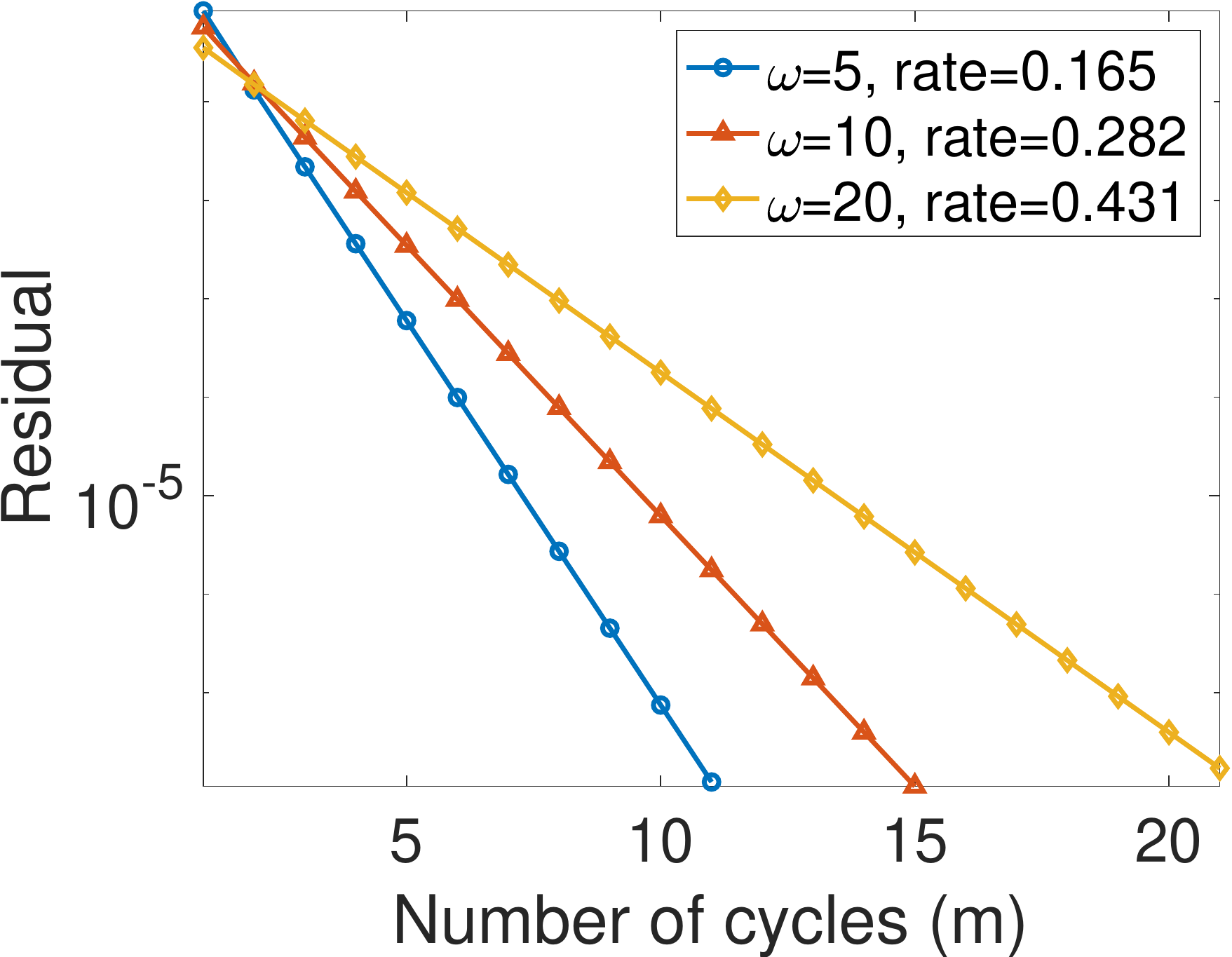}
  \vspace*{-1ex}
  \caption{Convergence studies for our time relaxation strategy for a
    trap moving on a ring of radius $r_0=0.6$ within the unit disk.  In
    (a) we fix the trap radius $\varepsilon=0.1$ and angular frequency
    $\omega=5$, and vary the mesh size with $\Delta x = 0.04, 0.02$ and
    $0.01$; the rate of convergence is almost independent of the mesh
    size.  In (b) we fix the angular frequency $\omega=5$ and mesh size
    $\Delta = 0.02$, and test three choices of trap radius
    $\varepsilon = 0.2, 0.15$ and $0.1$; smaller trap radii lead to
    slower convergence.  In (c) we fix the trap radius $\varepsilon=0.1$
    and mesh size $\Delta x = 0.02$, and consider three angular
    frequencies $\omega = 5, 10$ and $20$; larger angular frequencies
    lead to slower convergence.
  }
\label{fig:convg_time_relaxation}
\end{figure}

\subsection{Optimizing the radius of rotation of a moving trap in a disk}\label{sec:opt_disk}

Consider an absorbing circular trap of radius $\varepsilon = 0.05$
that rotates on a ring of radius $r$ about the center of a reflecting
unit disk at a constant angular frequency $\omega$, as illustrated in
Figure~\ref{particle_disk_examples:b}.  For any fixed $\omega$ and $r$
value, we can compute the MFPT using our time relaxation strategy with
mesh size $\dx = 0.01$, and forward Euler time-stepping with
$\Delta t = {\dx/f(\omega)}$, where $f(\omega)$ is a linear functions
of the angular frequency $\omega$.  The iteration proceeds over many
cycles until the tolerance from \S~\ref{sec:moving_conv_study} is
satisfied.  A typical result is shown, at a fixed instant in time, in
Figure~\ref{RotateTrap_diska}.

To estimate numerically the radius $r_{\textrm{opt}}(\omega)$ of
rotation of the trap that minimizes the average MFPT as a function of
$\omega$, we choose a discrete set of $\omega$ values and for each
such value estimate $r_{\textrm{opt}}$ by computing the average MFPT
for different discrete radii of rotation of the trap. We then record
the $r$ value that gives the minimum average MFPT as
$r_{\textrm{opt}}$.  In choosing the discrete radii set, various
values of $\Delta r$ were used, depending on $\omega$.  The results
are shown in Figure~\ref{RotateTrap_diskb}.  The use of discrete sets
of $r$ values induces some mild stair-casing artifacts into the plot.
In Figure~\ref{RotateTrap_disk} (and elsewhere), we have added a
heuristic fit to the data points.
\begin{figure}[htbp]
  \centering
  \makebox{
    \raisebox{0.5ex}{\small{(a)}}
    \includegraphics[width=0.37\textwidth]{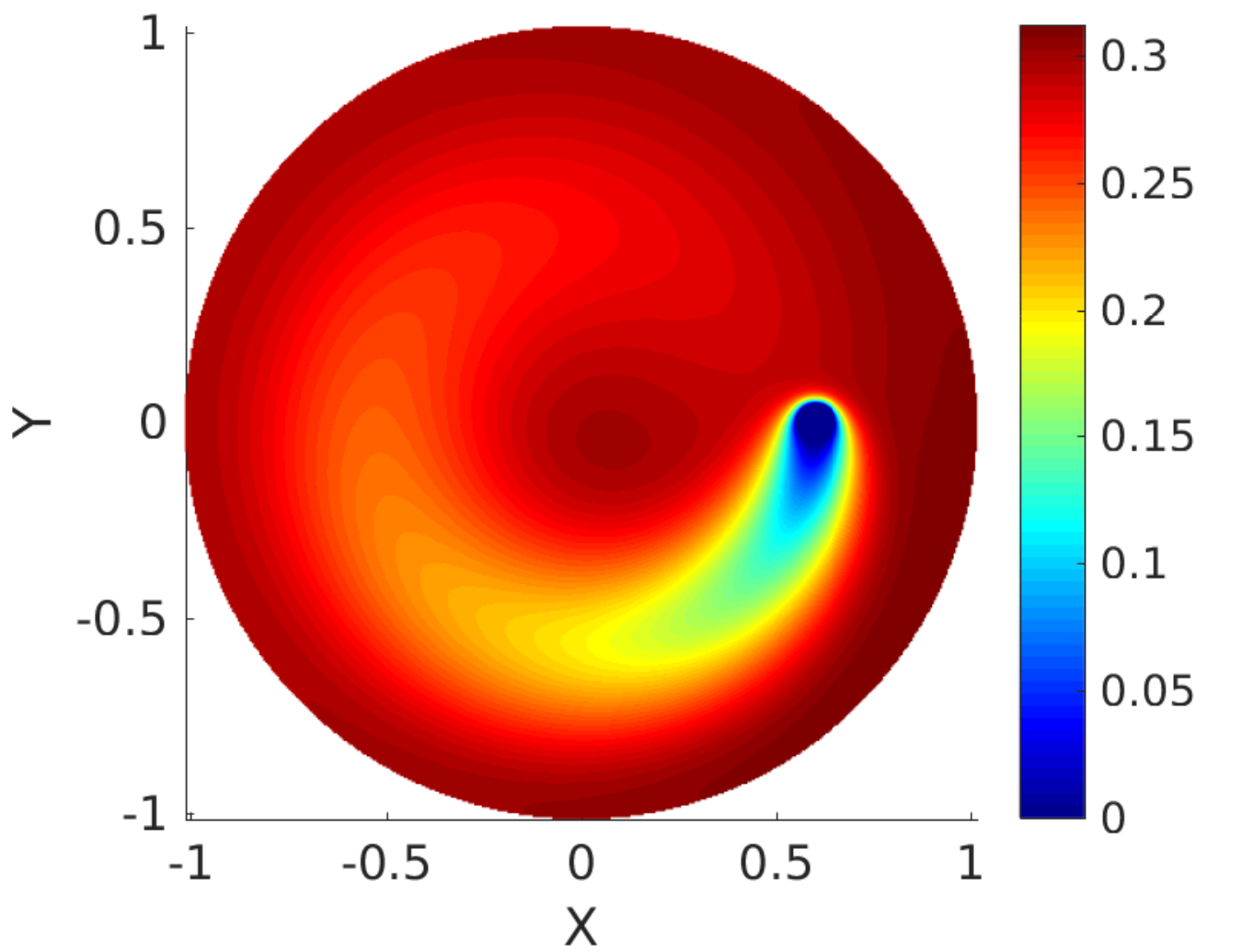}
    \phantomsubcaption
    \label{RotateTrap_diska}
  }
  \qquad
  \makebox{
    \raisebox{0.5ex}{\small{(b)}}
    \includegraphics[width=0.38\textwidth]{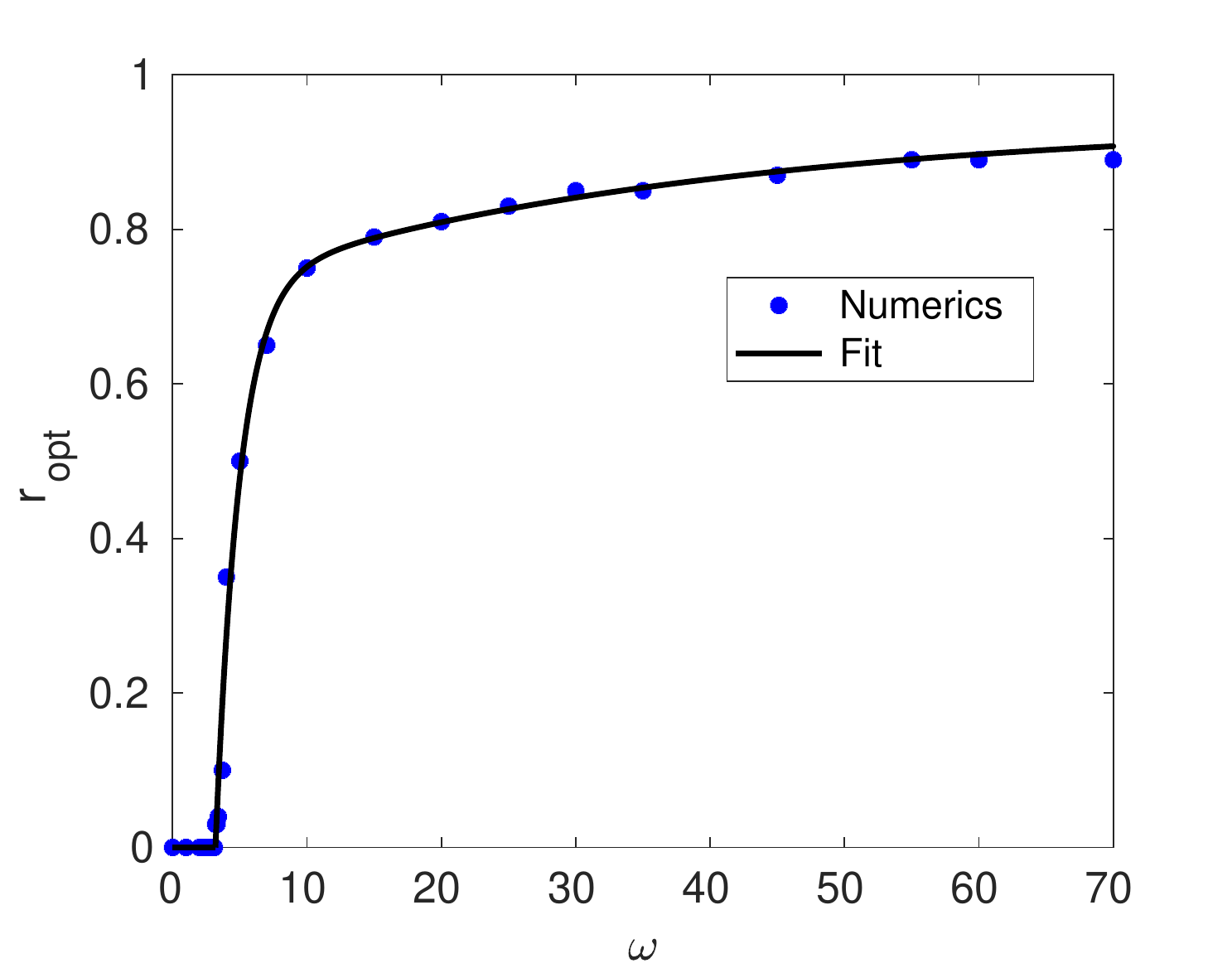}
    \phantomsubcaption
    \label{RotateTrap_diskb}
  }
  \vspace*{-1ex}
  \caption{ Left: the MFPT at a given time for a circular
      trap of radius $\varepsilon = 0.05$ rotating at an angular
      frequency of $\omega = 100$ about the center of a unit disk on a
      ring of radius $r=0.6$.  Right: the optimal radius of
      rotation $r_{\textrm{opt}}(\omega)$ that minimizes the average
      MFPT at a given rotation frequency $\omega$.}
  \label{RotateTrap_disk}
\end{figure}

From Figure~\ref{RotateTrap_diskb} we observe that there is a critical
rotation frequency $\omega_b$, estimated numerically as
$\omega_b \approx 3.131$, where the optimal radius of rotation changes
from a zero to a positive value. When $\omega < \omega_b$, the
location of the trap that minimizes the average MFPT is at the center
of the unit disk. Alternatively, when $\omega > \omega_b$, the optimal
trap moves away from the center of the domain.  This problem has
previously been studied analytically in \cite{tzou2015mean} using
asymptotic analysis valid in the limit of small trap radius.  In
\cite{tzou2015mean}, the critical value of $\omega_b$ was calculated
asymptotically as $\omega_b \approx 3.026$, which is close to what we
obtained numerically.

\subsection{Optimizing the trajectory of a trap in an elliptical region}

Next, we consider a circular absorbing circular trap of radius
$\varepsilon=0.05$ that is rotating at constant angular frequency on
an elliptical orbit about the center of an elliptical region as shown
in Figure~\ref{MFPT_rotateEll}.  The elliptical path for the trap is
taken as $(x,y) = (\alpha \cos(\omega t), \beta \sin(\omega t)$, where
$\alpha=ra$, $\beta=rb$, and $a$ and $b$ are the semi-major and
semi-minor axis of the elliptical region, respectively. We choose
$a=4/3$ and $b={1/a} = 3/4$, so that the area of the ellipse is the
same as that for the unit disk.  The parameter
$0 < r < (1-\varepsilon)$, referred to as the radius of rotation, is
used to stretch or shrink the orbit of the trap.  This
parameterization ensures that the eccentricity of all elliptical paths
of the trap is the same as that of the domain boundary.

Similar to that done in \S~\ref{sec:opt_disk}, for various angular
frequencies $\omega$ we numerically determine the optimal radius of
rotation $r_{\textrm{opt}}(\omega)$ that minimizes the average
MFPT. The results are shown in Figure \ref{Optimal_Ell}. As similar to
the case of the unit disk, we observe for the elliptical domain that
there is a critical value of $\omega$ where the optimal radius
bifurcates from the origin. We estimate this numerically as
$\omega_b \approx 2.65$.

\begin{figure}
    \centering
    \makebox{
      \raisebox{0.5ex}{\small{(a)}}
      \includegraphics[width=0.4\textwidth]{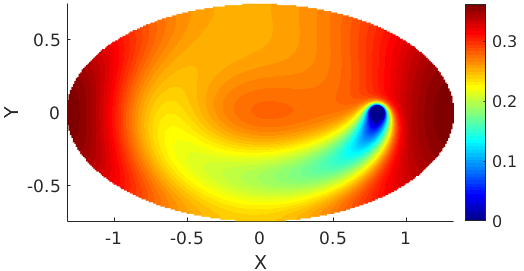}
      \phantomsubcaption
      \label{MFPT_rotateEll}
    }
    \qquad
    \makebox{
      \raisebox{0.5ex}{\small{(b)}}
      \includegraphics[width=0.38\textwidth]{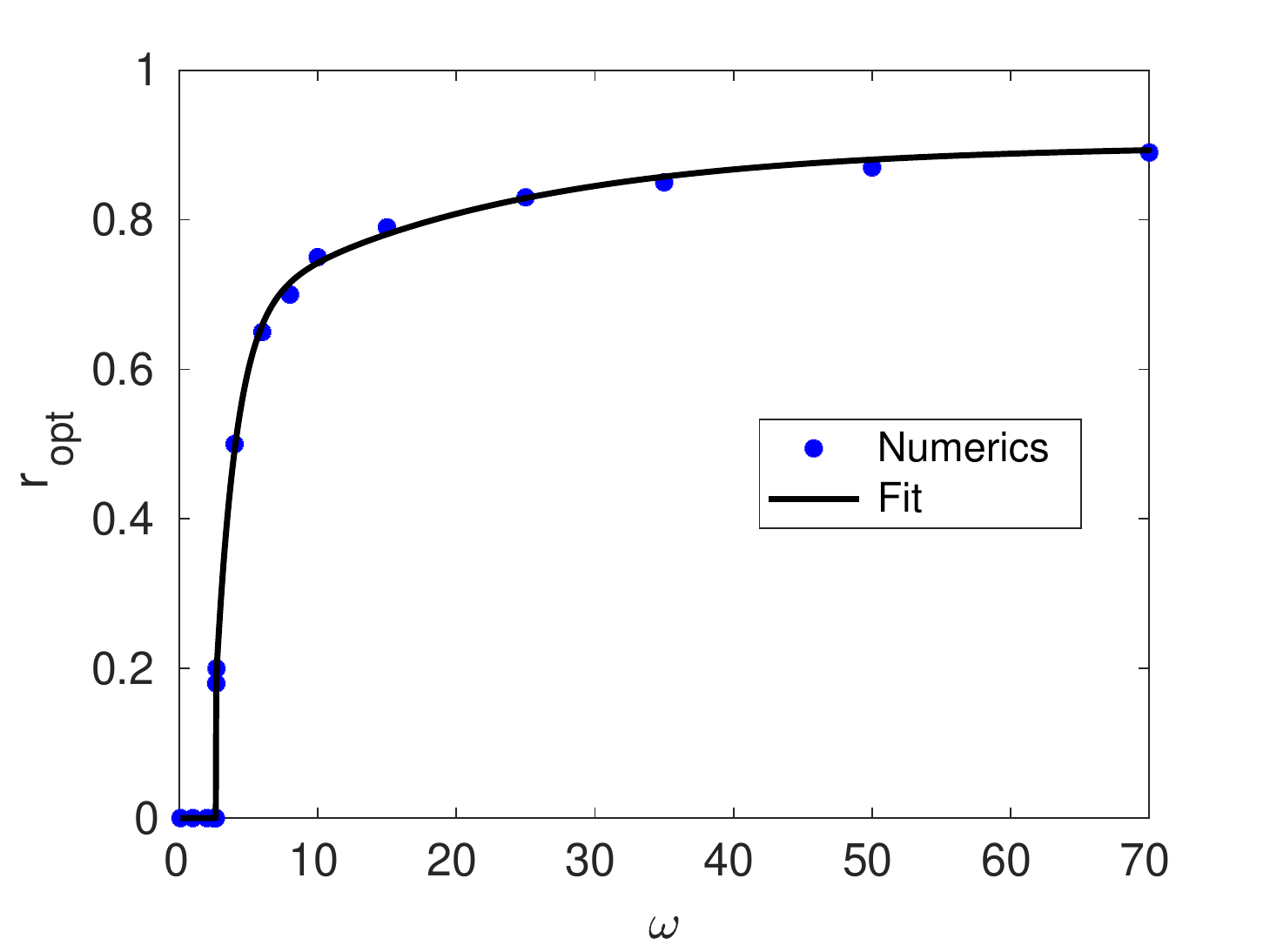}
      \phantomsubcaption
      \label{Optimal_Ell}
    }
    \vspace*{-1ex}
    \caption{The MFPT for a moving trap of radius $\varepsilon=0.05$ in an ellipse.
        The trap rotates on an elliptical path with semi-major axis
        $\alpha=ra$ and semi-minor axis $\beta=rb$ in an elliptical
        region with semi-major axis $a=4/3$ and semi-minor axis
        $b=3/4$.
        (a) MFPT at an instant in time with $\omega = 100$ and $r = 0.6$.
        (b) The optimal radius $r_{\textrm{opt}}(\omega)$ which minimizes the average MFPT
        for each $\omega$.}
    \label{RotateTrap_Ell}
\end{figure}

\subsection{Optimizing one rotating trap and one fixed trap in a disk}\label{sec:TwoTrapsDisk}

Next, we consider the unit disk in which there are two circular
absorbing traps each of radius $\varepsilon=0.05$. One of the traps is
fixed at the center of the disk while the other one is rotating at
constant angular frequency $\omega$ about the center of the disk on a
ring of radius $r$ concentric within the disk.
\begin{figure}
    \centering
    \makebox{
      \raisebox{0.5ex}{\small{(a)}}
      \includegraphics[width=0.35\textwidth]{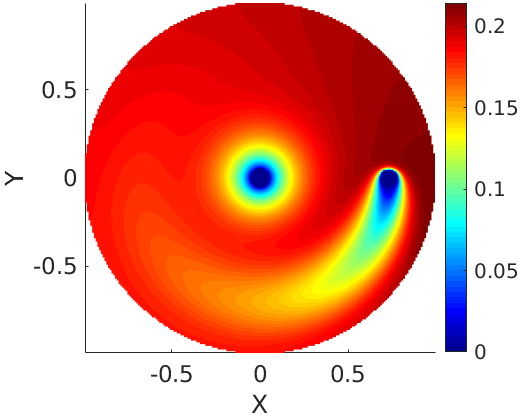}
      \phantomsubcaption
      \label{MFPT_disk_2traps}
    }
    \qquad
    \makebox{
      \raisebox{0.5ex}{\small{(b)}}
      \includegraphics[width=0.38\textwidth]{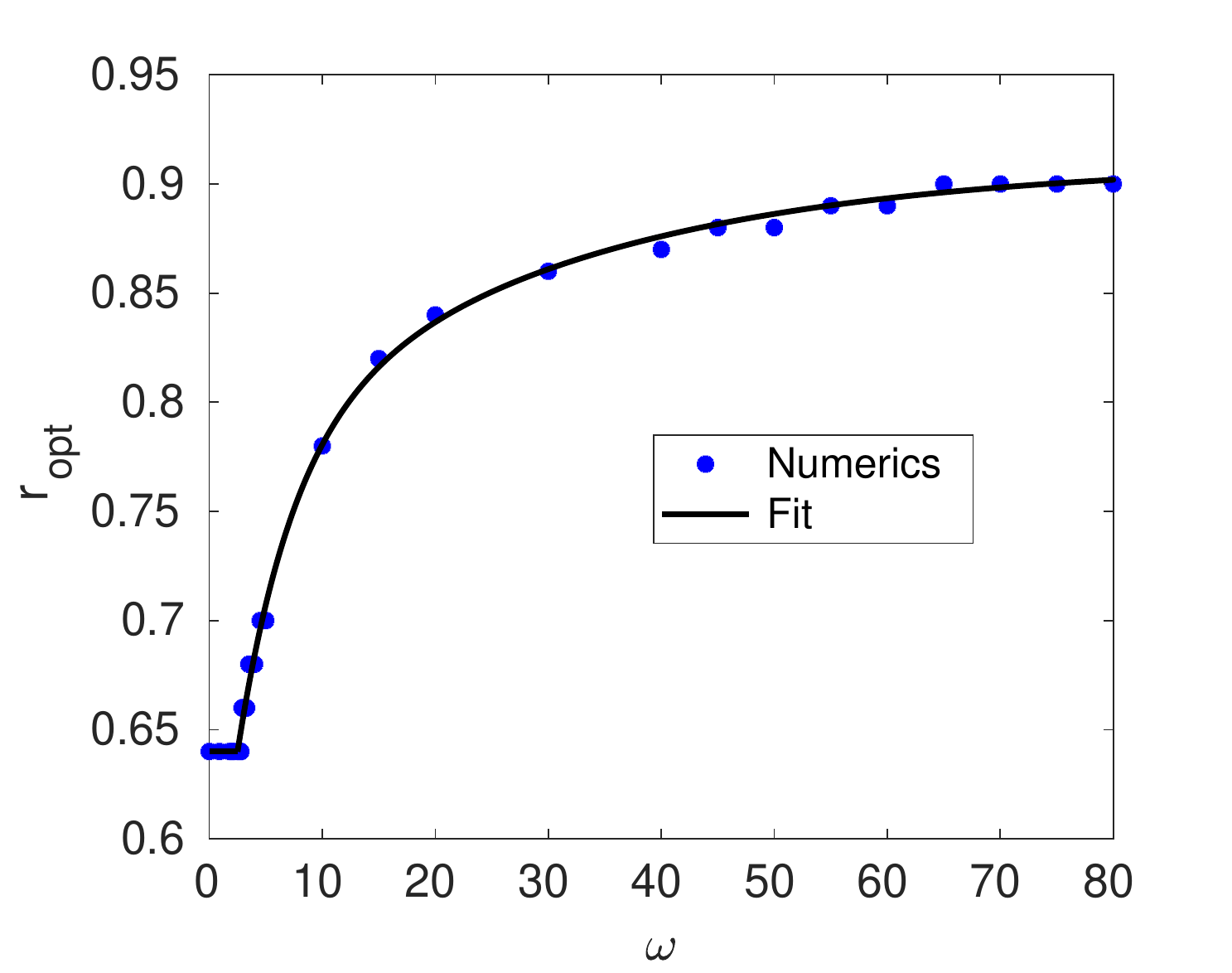}
      \phantomsubcaption
      \label{Opt_disk_2traps}
    }
    \vspace*{-1ex}
    \caption{The average MFPT for a unit disk with a
      trap at the center and a trap rotating
      with angular frequency $\omega$ around the center at radius $r$.
      The traps have radii $\varepsilon=0.05$.
      (a) MFPT at an instant in time with $r = 0.6$ and $\omega = 100$.
      (b) The optimal radius $r_{\textrm{opt}}(\omega)$ for the moving trap, which minimizes the average MFPT
      for each $\omega$.
      These values were found using a discrete search with $\Delta r = 0.01$.}
    \label{Disk_2traps}
\end{figure}
As a function of $\omega$, we proceed similarly to \S~\ref{sec:opt_disk}
to estimate numerically the radius of rotation of
the moving trap that minimizes the average MFPT. The results for the
optimal radius are shown in Figure~\ref{Opt_disk_2traps}. From this
figure, we observe that there is a specific angular frequency
$\omega_b$, estimated as $\omega_b\approx 2.5$, at which the optimal
radius first begins to increase from the fixed value
$r_{\textrm{opt}} = 0.64$ when $\omega$ increases beyond $\omega_b$.
This critical frequency is lower than that computed in \S~\ref{sec:opt_disk}
for a single rotating trap in the unit disk. An
analysis to predict the optimal radius in the fast rotation limit
$\omega\gg 1$ for this problem is given in \S~\ref{sec:fastrottrap}.

\section{Analysis}
\label{sec:analysis}

In this section, we provide some new analytical results to confirm
some of our numerical findings. First, in
\S~\ref{sec:asymp_perturbed_unit_disk} we use strong localized
perturbation theory (cf.~\cite{ward2018spots}, \cite{ward1993strong}),
to confirm some of our predictions on the optimum locations of steady
traps in perturbed disk-shaped domains. Next, in
\S~\ref{sec:skinnyellipse} we use a novel singular perturbation approach
to estimate optimal locations of colinear traps in long thin
domains. Finally, in \S~\ref{sec:fastrottrap}, we develop an
analytical approach to study the moving trap problem in a disk in the
limit of fast rotation.  For these three problems we will focus on
summarizing our main analytical results: a detailed derivation of them
is given in the Supplementary Material.

\subsection{Asymptotic analysis of the MFPT for a perturbed unit disk}
\label{sec:asymp_perturbed_unit_disk}

We begin by calculating the MFPT for a slightly perturbed unit disk
that contains $m$ traps. In the unit disk, and for small values of
$m$, the optimal trap configuration consists of equally-spaced traps
on a ring concentric within the disk
\cite{kolokolnikov2005optimizing}. When the disk is perturbed into a
star-shaped domain with $\mc{N}$ folds, we will develop an asymptotic
method to determine how the optimal trap locations and optimal average
MFPT associated with the unit disk are perturbed. For the special case
where $m=\mc{N}$ explicit results for these quantities are
derived. The results from this analysis are used to confirm some of
the numerical results in \S~\ref{Static_2Trap_Ellipse} and
\S~\ref{ThreeStarShapedDomain}.

For $\sigma \ll 1$, we use polar coordinates to define the perturbed
unit disk as
\begin{equation}\label{PerturbPar}
  \Omega_{\sigma} = \Big{ \{ }(r,\theta)\, \Big{|}\, 0 < r \leq 1 +
  \sigma \cos(\mc{N} \theta),  \,\,  0 \leq \theta \leq 2 \pi  \Big{ \} }.
\end{equation}
Observe that $\Omega_{\sigma}$ is a star-shaped domain with $\mc{N}$
folds for any $\sigma > 0$, and it tends to the unit disk, denoted by
$\Omega$, as $\sigma \to 0$.  From \eqref{MFPT_DiskSation} the
MFPT for a Brownian particle starting at a point
$\x \in \pdedomain_{\sigma}$ to be absorbed by a trap satisfies
\begin{equation}\label{Ellip_Model}
\begin{split}
D\, \nabla^2 u  = -1, &\quad \x \in \pdedomain_{\sigma};\\
\partial_n u = 0, \quad \x \in \partial \Omega_{\sigma}; &
\qquad u = 0, \quad \x \in \partial \Omega_{\varepsilon j},
\quad j = 0, \dots,m-1,
\end{split}
\end{equation} 
where
$\pdedomain_{\sigma} \equiv \Omega_{\sigma} \setminus \cup_{j=1}^{m}
\Omega_{\varepsilon j}$ is the perturbed domain with the trap set
deleted, while
$\Omega_{\varepsilon j} = \{\x : |\x - \x_j| \leq \varepsilon \}$ is
the $j^{\text{th}}$ absorbing trap centered at
$\x_j = r_c \exp \big{(} i ( 2\pi j/m + \psi) \big{)}$ with
$\psi > 0$, for $j = 0, \dots, m-1$ on the ring of radius $r_c$. A
simple calculation shows that the area of the star-shaped domain is
$|\Omega_{\sigma}| = |\Omega| + {\mathcal O}(\sigma^2)$. Our goal is
to use perturbation methods to reduce the MFPT problem for the
perturbed disk \eqref{Ellip_Model} to problems involving the unit disk.
Using the parameterization
$\x \equiv (x,y)= (r \cos(\theta),r \sin(\theta))$, the Neumann
boundary condition in \eqref{Ellip_Model} can be written as
\begin{equation}\label{PolarBC}
\begin{split}
  u_r -  &\frac{ \sigma h_{\theta}}{(1 + \sigma h)^2} u_{\theta}= 0 \quad
  \text{on} \quad r = 1 +\sigma h,\quad \text{where}\quad
  h(\theta) = \cos(\mc{N} \theta).
\end{split}
\end{equation} 

We begin by expanding the MFPT $u$ in terms of $\sigma\ll 1$ as
\begin{equation}\label{U_Sigma_Expand}
\begin{split}
  u(r,\theta; \sigma) = u_0(r,\theta) + \sigma u_1(r,\theta) +
  \sigma^2 u_2(r,\theta) + \ldots.
\end{split}
\end{equation} 
Upon substituting \eqref{U_Sigma_Expand} into \eqref{Ellip_Model} and
\eqref{PolarBC}, and collecting terms in powers of $\sigma$, we derive
that the leading-order MFPT problem satisfies
\begin{equation}\label{LeadingOrder}
\begin{split}
D\,  \nabla^2 u_0  = -1, &\quad \x \in \pdedomain;\\
\partial_n u_0 = 0, \quad \text{on} \quad r = 1; & \qquad u_0 = 0,
\quad \x \in \partial \Omega_{\varepsilon j}, \qquad j = 0, \dots,m-1,
\end{split}
\end{equation}
where
  $\pdedomain \equiv \Omega \setminus \cup_{j=1}^{m}
  \Omega_{\varepsilon j}$. At next order, the $\mathcal{O}(\sigma)$
  problem is
\begin{equation}\label{OrderSigma}
\begin{split}
  \nabla^2 u_1 = 0 , &\quad \x \in \pdedomain; \qquad \partial_r u_1 =
  -h u_{0rr} + h_{ \theta} u_{0 \theta}, \quad \text{on} \quad r = 1;
  \\ \quad u_1 &=0, \quad \x \in \partial \Omega_{\varepsilon j},
  \qquad j = 0, \dots,m-1,
\end{split}
\end{equation}
with $h \equiv h(\theta)$ as given in \eqref{PolarBC}. We emphasize
that the leading-order problem \eqref{LeadingOrder} and the
$\mathcal{O}(\sigma)$ problem \eqref{OrderSigma}, are
formulated on the unit disk and not on the perturbed disk.
Assuming $\varepsilon^2 \ll \sigma $, we use \eqref{AveMFPT} and
$|\Omega_{\sigma}| = |\Omega| + {\mathcal O}(\sigma^2)$ to derive an
expansion for the average MFPT for the perturbed disk in terms of the
unit disk as
\begin{equation}\label{AveMFPT_Perturb}
\begin{split}
  \overline{u} = \frac{1}{|\Omega|}\!\int_{\Omega } \! u_0(\x)
  \,\text{d}\x + \sigma \! \left[ \frac{1}{|\Omega|}\!\int_{\Omega }
   \! u_1(\x) \,\text{d}\x + \frac{1}{|\Omega|}\!\int_{0}^{2\pi}
   \! h(\theta)\,u_0|_{r=1} \, \text{d}\theta \right] +
  \mathcal{O}(\sigma^2,\varepsilon^2),
\end{split}
\end{equation}
where $|\Omega|=\pi$, $h(\theta) = \cos(\mc{N} \theta)$, and
$u_0|_{r=1}$ is the leading-order solution $u_0$ evaluated on $r=1$.
In the Supplementary Material we show how to calculate $u_0$
  and $u_1$, which then yields $\overline{u}$ from
  \eqref{AveMFPT_Perturb}. This leads to the following main result:

\begin{Prop}\label{u_bar_prop_main}
    Consider a near-disk domain with boundary
    $r = 1 + \sigma \cos(\mc{N} \theta)$, with $\sigma \ll 1$, that
    has $m$ traps equally-spaced on a ring of radius $r_c$, centered
    at $\x_{j} = r_c e^{i\theta_j}$, where
    $\theta_j = {2\pi j/m} + \psi$ for $j=0,\ldots,m-1$. Then, if
    ${\mc{N}/m}\in \mathbb{Z}^{+}$, where $\mathbb{Z}^{+}$ is the set
    of positive integers, we have in terms of the ring radius $r_c$
    and the phase shift $\psi$ that the average MFPT satisfies
\begin{subequations}\label{u_bar_prop}
\begin{gather}
  \overline{u} \sim \overline{u}_0  + \sigma \overline{U}_1 + \ldots \,,
  \label{u_bar_propA} \\
  \overline{u}_0 = \frac{1}{2m\nu D} + \frac{\pi
    \kappa_1}{m D}, \;\; \overline{U}_1 = - \frac{r_c^{\mc{N}}}{\mc{N} D}
  \cos(\mc{N}\psi) \! \left( \frac{2 + (\mc{N} - 2)r_c^{2 m}}{1 - r_c^{2 m}} -
     \frac{\mc{N}}{2} (k-1)\!\right)\!,  \label{u_bar_propA_2} \\
  \mbox{and} \quad
  \kappa_1 = \frac{1}{2\pi}\left[ -\log(m r_c^{m-1}) - \log(1 - r_c^{2m}) + m
    r_c^2  - \frac{3}{4}m \right],  \label{u_bar_propA_3}
\end{gather}
where $k\equiv {\mc{N}/m}$ and $k\in \mathbb{Z}^{+}$. Alternatively,
if ${\mc{N}/n}\notin \mathbb{Z}^{+}$, then
$\overline{u}\sim \overline{u}_0 + {\mathcal O}(\sigma^2)$.
\end{subequations}
\end{Prop}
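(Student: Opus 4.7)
The plan is to execute a two-step perturbation calculation in $\sigma$, with the trap-induced singularities handled at each order by strong localized perturbation theory. Throughout I treat each trap as a point Dirichlet sink modulated by the logarithmic gauge $\nu := -1/\log\eps$; the assumption $\eps^2\ll\sigma$ guarantees that the $O(\eps^2)$ trap-shape corrections are subdominant to the $O(\sigma)$ boundary-shape corrections, so the $O(\sigma)$ problem can be posed on the ``same'' punctured unit disk as the leading-order problem.

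\textbf{Step 1: Leading-order solution and $\overline{u}_0$.} For \eqref{LeadingOrder} I write the outer solution in the form $u_0(\x) \sim \chi - 2\pi D^{-1}\sum_{j=0}^{m-1} A_j\, G(\x;\x_j)$, where $G$ is the Neumann Green's function of the unit disk (known in closed form) and $\chi$ is an unknown constant. Matching to inner expansions near each $\x_j$ gives, to leading order in $\nu$, a linear system for $\{A_j,\chi\}$ whose coefficients involve $G(\x_j;\x_i)$ and the regular part $R(\x_j;\x_j)$. The divergence-theorem solvability condition $\sum_j A_j = |\Omega|/(2\pi D) = 1/(2D)$, together with the $m$-fold rotational symmetry of the ring pattern forcing $A_j\equiv A=1/(2mD)$, then determines $\chi=\overline{u}_0$. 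The discrete sum $\sum_{j\ne 0} G(\x_0;\x_j) + R(\x_0;\x_0)$ on a concentric ring evaluates in closed form to the stated $\kappa_1$ via standard log-product identities, yielding the leading piece of \eqref{u_bar_propA_2}.

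\textbf{Step 2: The $O(\sigma)$ harmonic correction.} The Neumann datum in \eqref{OrderSigma} is $g(\theta)=-h(\theta)u_{0rr}|_{r=1}+h'(\theta)u_{0\theta}|_{r=1}$. Since $u_{0r}|_{r=1}=0$ and $D\nabla^2 u_0=-1$ at $r=1$, substituting $u_{0rr}|_{r=1}=-1/D-u_{0\theta\theta}|_{r=1}$ re-expresses $g$ entirely in terms of the boundary trace of $u_0$. Using the explicit form of $G$ restricted to $r=1$, this trace admits the Fourier series
\begin{equation*}
  u_0(1,\theta) \;=\; \overline{u}_0 \;+\; \frac{1}{mD}\sum_{k=1}^{\infty} \frac{r_c^{km}}{k}\cos\!\bigl(km(\theta-\psi)\bigr) \;+\; \text{constants in }\theta,
\end{equation*}
because only harmonics that are integer multiples of $m$ survive the sum over the trap locations $\x_j=r_c e^{i(2\pi j/m+\psi)}$. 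Hence $g$ is a finite trigonometric combination of $\cos(\mc{N}\theta)$ against these harmonics.

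\textbf{Step 3: Assembly of $\overline{U}_1$.} The correction \eqref{AveMFPT_Perturb} has two pieces. For the area integral of $u_1$, I apply Green's second identity against $r^2/4$ (which satisfies $\nabla^2(r^2/4)=1$): since $u_1$ is harmonic on $\pdedomain$ and vanishes on the traps, the bulk integral collapses to a boundary flux integral over $r=1$ involving $g(\theta)$, with the trap-boundary contributions entering only at $O(\nu^2)$ relative to the leading correction and therefore negligible. The explicit second piece $|\Omega|^{-1}\int_0^{2\pi} h(\theta)u_0|_{r=1}d\theta$ is read off directly from the Fourier series above. In both pieces, Fourier orthogonality of $\cos(\mc{N}\theta)$ against $\cos(km(\theta-\psi))$ forces a vanishing answer unless $\mc{N}=km$ for some $k\in\mathbb{Z}^+$, which is precisely the selection rule $\mc{N}/m\in\mathbb{Z}^+$ in the statement. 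On the resonant mode the pairing produces $\pi\cos(\mc{N}\psi)$, and summing geometric series of the form $\sum_k r_c^{km}$ gives the rational function $\bigl(2+(\mc{N}-2)r_c^{2m}\bigr)/\bigl(1-r_c^{2m}\bigr)$ in \eqref{u_bar_propA_2}; the subtracted $\mc{N}(k-1)/2$ term arises after integrating the $h'u_{0\theta}$ contribution by parts against the derivative of the Fourier series.

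\textbf{Anticipated obstacle.} The serious bookkeeping lies in Step 3: the two pieces in \eqref{AveMFPT_Perturb} each generate several Fourier-pairing terms, and one has to verify that their combination collapses to the compact closed form above. In particular, the coefficient $\mc{N}(k-1)/2$ is delicate because it comes from the angular-derivative piece of $g(\theta)$ and depends on careful integration-by-parts; a sign error there would spoil the agreement with the numerics of Sections \ref{Static_2Trap_Ellipse} and \ref{ThreeStarShapedDomain}. Once the resonant pairing is tracked cleanly, the non-resonant alternative $\overline{u}\sim\overline{u}_0+O(\sigma^2)$ follows immediately from Fourier orthogonality.
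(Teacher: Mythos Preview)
Your Steps~1 and~2 match the paper's approach closely: the leading-order ring calculation via the cyclic Neumann Green's matrix and the reduction of the Neumann datum $g(\theta)$ to a Fourier series in harmonics of order $km$ are exactly what the paper does. The gap is in Step~3. Green's second identity against $w=r^2/4$ does \emph{not} reduce $\int_{\Omega} u_1\,\mathrm{d}\x$ to a boundary integral in $g$ alone: the outer-boundary term is $\tfrac12\oint_{r=1} u_1\,\mathrm{d}\theta - \tfrac14\oint g\,\mathrm{d}\theta$, and the first of these contains precisely the unknown constant $\overline{u}_1$ you are trying to compute. The trap-boundary flux terms do vanish, but because $\sum_j B_j=0$ and all $w(\x_j)=r_c^2/4$ coincide, not because they are $O(\nu^2)$. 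The identity therefore collapses to the tautology $\pi\overline{u}_1=\pi\overline{u}_1$ and determines nothing.

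The missing mechanism is that $\overline{u}_1$ is fixed by the \emph{matching condition} at the traps --- the requirement that the regular part of the outer expansion near $\x_j$ equal $B_j/\nu$ --- not by any bulk integral identity. The paper handles this by splitting $u_1=u_{1H}+u_{1p}+\overline{u}_1$, where $u_{1p}$ solves the trap-free Neumann problem $\Delta u_{1p}=0$, $\partial_r u_{1p}|_{r=1}=g$ by separation of variables, and $u_{1H}$ carries the logarithmic singularities with strengths $B_j$. Matching then gives a linear system whose solvability yields, to leading order in $\nu$, $\overline{u}_1=-\tfrac{1}{m}\sum_{j} u_{1p}(\x_j)$. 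The entire content of the proposition thus rests on \emph{evaluating $u_{1p}$ at the trap centers} and summing: when $\mc{N}/m\in\mathbb{Z}^+$ each $u_{1p}(\x_j)$ is the same (giving $\overline{u}_1=-u_{1p}(\x_0)$ and the stated closed form after summing a geometric series in $r_c^{2m}$), while when $\mc{N}/m\notin\mathbb{Z}^+$ the sum $\sum_j\cos(\mc{N}\psi+2\pi j\mc{N}/m)$ vanishes, giving $\overline{u}_1=0$. Your Fourier-orthogonality intuition for the non-resonant case is correct, but it enters through this sum over trap centers, not through a boundary pairing against $g$. To repair the argument you must solve for $u_{1p}$ explicitly and evaluate it at the $\x_j$; there is no shortcut via Green's identity.
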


{This result shows that there are two distinct cases:
  ${\mc{N}/m}\in \mathbb{Z}^{+}$ and
  ${\mc{N}/m}\notin \mathbb{Z}^{+}$. In the latter case, the
  correction to the average MFPT at ${\mathcal O}(\sigma)$ vanishes,
  and a higher-order asymptotic theory would be needed to determine
  the correction term at ${\mathcal O}(\sigma^2)$. We do not pursue
  this here.}

{In the analysis below we will focus on the case where $\mc{N}=m$ and
will use our result in \eqref{u_bar_prop}} to optimize the average MFPT
with respect to the radius $r_c$ of the ring and the phase shift
$\psi$. We observe from \eqref{u_bar_propA_2} that $\overline{u}$ is
minimized when $\psi = 0$.  Therefore, the optimal traps on the ring
are on rays from the origin that coincide with the maxima of the
boundary perturbation given by
$\max(1 + \sigma \cos(\mc{N}\theta)) \equiv 1 + \sigma$.
To optimize $\overline{u}$ with respect to $r_c$, we write
$\overline{u}_0=\overline{u}_0(r_c)$ and
$\overline{U}_1=\overline{U}_1(r_c)$ and expand
\begin{equation}\label{r_expand}
r_{c\, \textrm{opt}}= r_{c_0} + \sigma\, r_{c_1} + \ldots\,.
\end{equation}
	Here $r_{c_0}$ is the leading-order optimal ring-radius obtained by
  setting $\overline{u}_0^{\prime}(r_{c})=0$ in \eqref{u_bar_propA_2}. In this
  way, for any $m\geq 2$, we obtain $r_{c_0}$ is the unique root on
  $0<r_{c_0}<1$ to
 \begin{align}\label{TransEqua_rc}
\frac{r_c^{2m}}{(1 - r_c^{2m})} = \frac{m-1}{2m} - r_c^2.
\end{align} 
	Numerical values for this root for various $m$ were given in
  the table in Figure~\ref{Ntraps_Ring_config}.

	Next, we substitute \eqref{r_expand} into the expansion in
  \eqref{u_bar_propA}, and collect terms in powers of $\sigma$. In
  this way, the optimal average MFPT is given by
\begin{equation}\label{ubar_opt}
\begin{split}
  \overline{u}_{\textrm{opt}} &\sim \overline{u}_0(r_{c_0})  + \sigma
  \overline{U}_1(r_{c_0}) + \ldots, 
\end{split}
\end{equation}
	where $\overline{u}_0$ and $\overline{U}_1$ are as defined in
\eqref{u_bar_propA_2}. Moreover,
by setting $\overline{u}^{\prime}(r_c)=0$ and expanding $r_c$ as in
\eqref{r_expand}, we obtain that
 $r_{c_1}=-{\overline{U}_1^{\prime}(r_{c_0})/\overline{u}_0^{\prime \prime}(r_{c_0})}$.
This yields that
\begin{align}\label{rc_Opt}
  r_{c_1} = \frac{1}{\pi} \frac{\chi^{\prime}(r_{c_0})}{\kappa_1^{\prime\prime}
  (r_{c_0})}\,; \quad \chi^{\prime}(r_{c_0}) = -
  \frac{m r_{c_0}^{m-1}}{(1 - r_{c_0}^{2m})^2}
  \Big{[} (m-2)r_{c_0}^{4m} + (4 -3m)r_{c_0}^{2m} -2  \Big{]},
\end{align}
	and $\kappa_1^{\prime\prime}(r_{c_0})$ is the second
  derivative of $\kappa_1(r_c)$ as defined in \eqref{u_bar_propA_3},
  evaluated at the leading-order optimal radius $r_{c_0}$.  Since
  $r_{c_0}$ is a minimum point of $\kappa_1(r_c)$, then
  $\kappa_1^{\prime\prime}(r_{c_0}) > 0$.  Also, it can easily be
  shown that $\chi^{\prime}(r_{c_0}) > 0$ for $0< r_{c_0}< 1$. Thus,
  $r_{c_1} > 0$, which implies that the centers of the traps bulge outwards
  towards the maxima of the domain boundary perturbation.  This result
  is summarized as follows:

\begin{Prop}\label{rc_prop_main}
  In the near disk case with boundary
  $r = 1 + \sigma \cos(\mc{N} \theta)$ and $\sigma \ll 1$, and for a
  ring pattern with $m=\mc{N}$ traps equally spaced on a ring of
  radius $r_c$, the optimal radius $r_{c \, \textrm{opt}}$ of the ring
  is given by
\begin{subequations}\label{rc_prop}
\begin{gather}
  r_{c\, \textrm{opt}} \sim r_{c_0} + \,\frac{\sigma}{\pi}
  \frac{\chi^{\prime}(r_{c_0})}{\kappa_1^{\prime\prime}(r_{c_0})} +
  \ldots\,, \label{rc_propA}\\
  \text{where} \qquad \kappa_1^{\prime\prime}(r_{c_0}) = \frac{m}{\pi
    r_{c_0}^2} \left[\frac{(m-1)}{2m} + r_{c_0}^2 +
    \frac{r_{c_0}^{2m}}{(1 - r_{c_0}^{2m})^2} \Big{(} 2m-1 +
    r_{c_0}^{2m} \Big{)} \right]\,. \label{rc_propB}
\end{gather}
\end{subequations}
	Here $\chi^{\prime}(r_{c_0})$ is given in \eqref{rc_Opt} in
  terms of the unique solution $r_{c_0}$ to \eqref{TransEqua_rc}.
\end{Prop}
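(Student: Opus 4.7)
The plan is to specialize the two-term average-MFPT expansion from Proposition~\ref{u_bar_prop_main} to $\mathcal{N}=m$ (so $k=1$ and the $(k-1)$ term vanishes in $\overline{U}_1$), and then minimize jointly over the ring radius $r_c$ and the phase shift $\psi$. Because $\psi$ enters $\overline{U}_1$ only through $\cos(\mathcal{N}\psi)$ with a coefficient that is strictly negative for $0<r_c<1$ (the bracket $(2+(\mathcal{N}-2)r_c^{2m})/(1-r_c^{2m})$ is positive and the overall prefactor $-r_c^{\mathcal{N}}/(\mathcal{N}D)$ is negative), minimization in $\psi$ forces $\psi=0$ and reduces the problem to the one-variable minimization of $r_c\mapsto \overline{u}_0(r_c)+\sigma\overline{U}_1(r_c)$.

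I would then substitute the expansion $r_{c\,\textrm{opt}} = r_{c_0} + \sigma\,r_{c_1} + O(\sigma^2)$ into the stationarity condition $\partial_{r_c}\overline{u}=0$ and Taylor-expand about $r_{c_0}$, producing at successive orders
\begin{equation*}
\overline{u}_0'(r_{c_0}) = 0, \qquad \overline{u}_0''(r_{c_0})\,r_{c_1} + \overline{U}_1'(r_{c_0}) = 0.
\end{equation*}
Since only $\kappa_1(r_c)$ depends on $r_c$ in the expression for $\overline{u}_0$ in \eqref{u_bar_propA_2}, the leading condition reduces to $\kappa_1'(r_{c_0})=0$. A direct differentiation of \eqref{u_bar_propA_3} and clearing of denominators reproduces \eqref{TransEqua_rc}, while a second differentiation yields the explicit $\kappa_1''(r_{c_0})$ asserted in \eqref{rc_propB}. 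Uniqueness of the root $r_{c_0}\in(0,1)$ and its being a minimum both follow from the manifest positivity of $\kappa_1''$ on $(0,1)$ in \eqref{rc_propB}.

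For the first-order correction I would introduce $\chi(r_c) := r_c^m(2+(m-2)r_c^{2m})/(1-r_c^{2m})$ so that, at $\psi=0$ and $\mathcal{N}=m$, $\overline{U}_1 = -\chi/(mD)$. Then $\overline{u}_0'' = (\pi/(mD))\kappa_1''$ and $\overline{U}_1' = -\chi'/(mD)$, and the first-order stationarity condition collapses to $r_{c_1} = \chi'(r_{c_0})/(\pi\kappa_1''(r_{c_0}))$, matching \eqref{rc_propA}. Expanding $\chi'$ via the quotient rule, the derivative of $(2+(m-2)r_c^{2m})/(1-r_c^{2m})$ simplifies to $2m^2 r_c^{2m-1}/(1-r_c^{2m})^2$; combining with the $r_c^{m-1}$ contribution from the other product term rearranges into the polynomial form stated in \eqref{rc_Opt}.

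The main obstacle is not any single step but the careful book-keeping through the quotient-rule expansions, together with establishing the physically meaningful sign claim $r_{c_1}>0$ (traps bulge outward toward the boundary maxima). Positivity of $\kappa_1''(r_{c_0})$ is immediate from \eqref{rc_propB}. For the numerator, the prefactor $-m r_{c_0}^{m-1}/(1-r_{c_0}^{2m})^2$ in \eqref{rc_Opt} is negative, so it suffices to show that the bracket $(m-2)s^2+(4-3m)s-2$ with $s=r_{c_0}^{2m}\in(0,1)$ is negative. This is immediate for $m=2$ (the expression is $-2s-2$) and for $m\ge 3$ it is an upward-opening quadratic taking negative values at $s=0$ and $s=1$ whose discriminant check places one root at $s<0$ and the other at $s>1$, so the bracket is negative throughout $(0,1)$.
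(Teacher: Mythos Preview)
Your proposal is correct and follows essentially the same route as the paper: minimize first in $\psi$ (yielding $\psi=0$), then expand the stationarity condition $\partial_{r_c}\overline{u}=0$ in $\sigma$ to obtain $r_{c_1}=-\overline{U}_1'(r_{c_0})/\overline{u}_0''(r_{c_0})=\chi'(r_{c_0})/(\pi\kappa_1''(r_{c_0}))$, with the explicit formulas for $\chi'$ and $\kappa_1''$ verified by direct differentiation. Your only additions are an explicit definition of $\chi$ (which the paper leaves implicit) and a more detailed justification that the bracket in \eqref{rc_Opt} is negative on $(0,1)$, where the paper simply asserts that ``it can easily be shown that $\chi'(r_{c_0})>0$.''
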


We first apply our results to an ellipse of area $\pi$ that contains
two circular traps each of radius $\varepsilon=0.05$ centered on the
major axis. This corresponds to the early stage of deformation of the
unit disk in the optimal MFPT problem studied in
\S~\ref{Static_2Trap_Ellipse} (see Figure \ref{Ellpt_MFPT_Opt}). The
boundary of the ellipse is parameterized for $\sigma\ll 1$ by
$(x,y)=(a \cos(\theta)\,, b \sin(\theta))$, for $0\leq \theta<2\pi$,
where $a = 1 + \sigma$ and $b=1/(1+\sigma)$ are the semi-axes
chosen so that $ab=1$ for any $\sigma>0$. For $\sigma\ll 1$, we
readily calculate that the domain boundary in polar coordinates is
$r=1+\sigma \cos(2\theta) + {\mathcal O}(\sigma^2)$.

Upon setting $m=2$ and $\mc{N}=2$ in \eqref{rc_prop}, and then using
$\sigma=(b^{-1}-1)$ as $b\to 1^{-}$, we obtain that the optimal ring radius
satisfies 
\begin{subequations}\label{rc_Opt_m2}
\begin{align}
  r_{c\, \textrm{opt}} \sim r_{c_0} +  \frac{1}{\pi} \left(  \frac{1}{b} - 1
  \right)\, \frac{\chi^{\prime}(r_{c_0})}{\kappa_1^{\prime\prime}(r_{c_0})}, 
 \label{rc_prop_m2A}
\end{align}
where $r_{c_0}\approx 0.4536$ is the unique root of
\eqref{TransEqua_rc} when $m=2$. Here, from
\eqref{rc_propB} and \eqref{rc_Opt} with $m=2$, we have that
\begin{equation}
  \chi^{\prime}(r_{c_0})  = \frac{4 r_{c_0}(r_{c_0}^{4} + 1 )}{(1 - r_{c_0}^{4})^2}\,,
   \quad \mbox{and} \quad \kappa_1^{\prime\prime}(r_{c_0})  =
  \frac{2}{\pi\, r_{c_0}^2} \left[\frac{1}{4} + r_{c_0}^2 +
 \frac{r_{c_0}^{4} (3 + r_{c_0}^{4}  )}{(1 - r_{c_0}^{4})^2}   \right].
                           \label{rc_prop_m2B}
\end{equation}
\end{subequations}
By setting $r_{c_0} = 0.4536$ in \eqref{rc_Opt_m2}, \eqref{ubar_opt}, and
\eqref{u_bar_prop} we obtain for a trap radius of $\varepsilon=0.05$
that the optimal ring radius and the optimal average MFPT are 
\begin{align}\label{rc_mfpt_optimal}
  r_{c\,\textrm{opt}}(b) \sim 0.4536 +  \left(  \tfrac{1}{b} - 1  \right) 0.3559,
  \quad
  \overline{u}_{\textrm{opt}} \sim \frac{1}{D} \Big[
  0.5120 - \left(  \tfrac{1}{b} - 1  \right) 0.2149 \Big],
\end{align}
as $b\to 1^{-}$. This perturbation result characterizes the optimal
trap locations and optimal average MFPT for a slight elliptical
perturbation of the unit disk. 

\begin{figure}
    \centering
    \begin{subfigure}[b]{0.44\textwidth}
        \includegraphics[width=\textwidth]{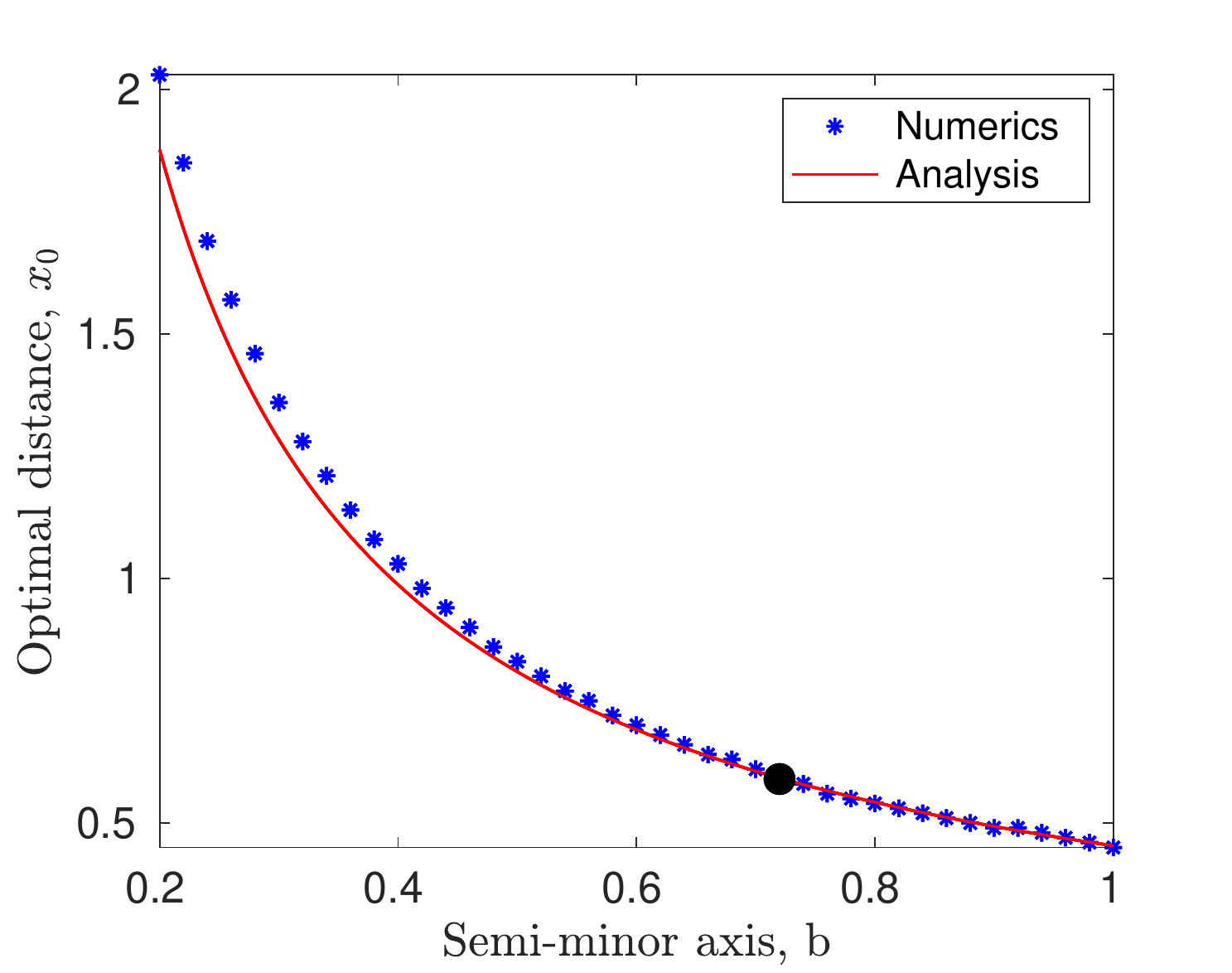}
        \caption{Optimal location of traps}
        \label{Ellpt_Opt_X0_Analysis}
    \end{subfigure} \qquad 
    \begin{subfigure}[b]{0.46\textwidth}
        \includegraphics[width=\textwidth]{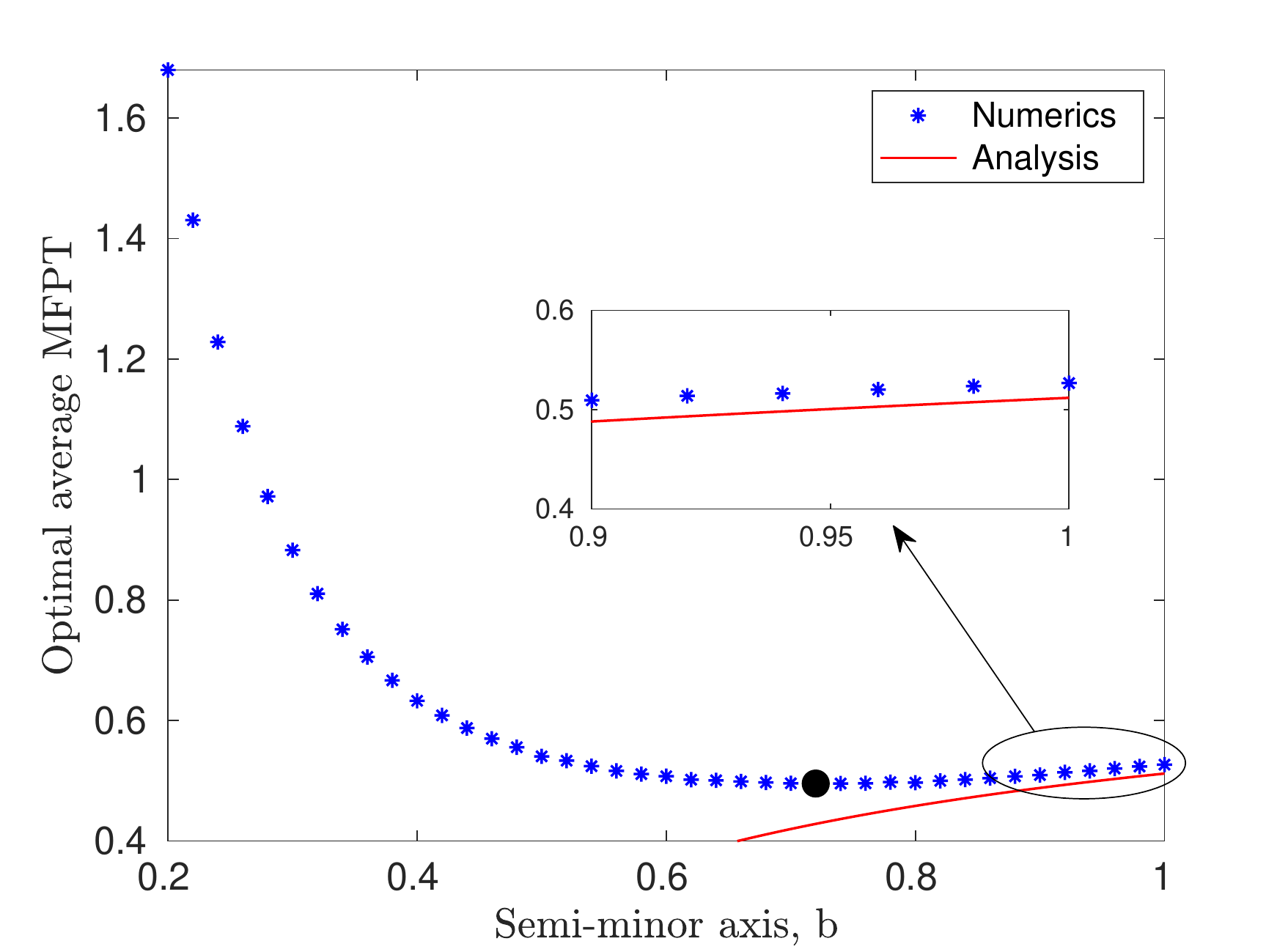} 
        \caption{Optimal average MFPT }
        \label{Ellpt_Opt_MFPT_Analysis}
    \end{subfigure}
    \vspace*{-3ex}
    \caption{Two traps in an ellipse: a comparison of the perturbation results in
        \eqref{rc_mfpt_optimal} (thin lines) with the full numerical
        results (asterisks) of Figure~\ref{Ellpt_MFPT_Opt} for the deforming elliptical region
        containing two traps of radius $\varepsilon=0.05$.
        The asymptotic theory is valid for semi-minor axis $b\to 1^-$
        (early stages of disk deformation).
        (a) optimal distance of the traps from the
        center of the ellipse versus $b$. (b) optimal average MFPT
        versus $b$.  The dot is the globally optimal
        average MFPT found earlier in Figure~\ref{Ellpt_MFPT_Opt}.}
    \label{Ellpt_MFPT_Opt_Analysis}
\end{figure}

For $D=1$, Figures~\ref{Ellpt_Opt_X0_Analysis} and
\ref{Ellpt_Opt_MFPT_Analysis} show a comparison of our analytical
results \eqref{rc_mfpt_optimal} for the optimal location of the traps
and the optimal average MFPT with the corresponding full numerical
results computed using the CPM in Figure~\ref{Ellpt_MFPT_Opt}.
Although our analysis is only
valid for $b \to 1^-$, Figure~\ref{Ellpt_Opt_X0_Analysis} shows that
our perturbation result for the optimal trap locations agree closely
with the numerical result even for moderately small values of
$b$. However, this is not the case for the optimal average MFPT, where
the perturbation result deviates rather quickly from the numerical
result as $b$ decreases. The key qualitative conclusion from the
analysis is that the optimal average MFPT decreases as $b$ decreases
below $b=1$. This establishes that, for the class of elliptical
domains with fixed area $\pi$, the optimal average MFPT is minimized
not for the unit disk, but for a particular ellipse.

Next, we apply our theory to the cases $m=\mc{N}=3$ and $m=\mc{N}=4$,
which were studied numerically in Figure~\ref{fig:3and4star} when
$\sigma=0.2$. For traps of radii $\varepsilon=0.05$ and $D=1$, we
obtain from \eqref{rc_prop} and \eqref{ubar_opt} that when
$\sigma\ll 1$ the optimal ring radius and optimal average MFPT are
\begin{align}
  r_{c,\textrm{opt}} \sim 0.5517  + 0.2664 \, \sigma \,, \qquad
  \overline{u}_{\textrm{opt}} \sim 0.2964 - 0.1168\, \sigma \,; \qquad
  m=\mc{N}=3 \,, \\
 r_{c,\textrm{opt}} \sim 0.5985  + 0.1985 \, \sigma \,, \qquad
  \overline{u}_{\textrm{opt}} \sim 0.1998 - 0.0663\, \sigma \,; \qquad
  m=\mc{N}=4 \,.
\end{align}
For $\sigma=0.2$, this yields that $r_{c,\textrm{opt}} \approx 0.6049$
when $m=\mc{N}=3$ and $r_{c,\textrm{opt}} \approx 0.6382$ when
$m=\mc{N}=4$. Although $\sigma=0.2$ is not very small, the asymptotic
results still provide a rather decent approximation to the numerical
results for the optimal trap locations shown in Figure~\ref{fig:3and4star}.

\subsection{Asymptotics for high-eccentricity elliptical domains}
\label{sec:skinnyellipse}

In this subsection we provide two different approximation schemes for
estimating the optimal average MFPT for an elliptical domain of
high-eccentricity that contains either two or three traps centered
along the semi-major axis.

\subsubsection{Approximation by thin rectangular domains}\label{sec:thin}

We consider a Brownian particle in a thin elliptical domain of area
$\pi$ with semi-major axis $a$ and semi-minor axis $b$, that contains
two circular absorbing traps each of radius $\varepsilon$ on its major
axis (see Figure \ref{Ellpt_MFPT_Opt}) for $b \ll 1$.  In order to
estimate the MFPT for this particle, the elliptical region is replaced
with a thin rectangular region defined by
$[-a_0, a_0] \times [-b_0, b_0]$
satisfying $(a_0/b_0) \gg 1$. Moreover, the circular traps in the
ellipse are replaced with thin vertical trap strips of width
$2 \varepsilon_0$ centered at $(-x_0,0)$ and $(x_0,0)$, namely
$\Omega_1 = \Phi_1 \times [-b_0, b_0]$ and
$\Omega_2 = \Phi_2 \times [-b_0, b_0]$
where
$\Phi_1 = [-x_0 - \varepsilon_0 \leq x \leq -x_0 + \varepsilon_0]$ and
$\Phi_2 = [x_0 - \varepsilon_0 \leq x \leq x_0 + \varepsilon_0]$.
The MFPT in this rectangular domain satisfies
\begin{equation}\label{SkinnyEllp_2Traps_Prob}
\begin{aligned}
  \nabla^2 u & = -1/D \,, \quad \mbox{in} \quad
               \x \in [-a_0, a_0] \times [-b_0, b_0]
               \setminus \lbrace{\Omega_1,\,\Omega_2\rbrace} \,,\\
  \partial_x u  &= 0\,, \quad \mbox{on}   \quad x=\pm a_0 \quad \mbox{for}
                                      \quad |y|\leq b_0 \,, \\
  \partial_y u  &= 0\,, \quad \mbox{on}  \quad y=\pm b_0 \quad \mbox{for}
                  \quad x \in [-a_0,a_0] \setminus
                  \lbrace{\Phi_1,\,\Phi_2\rbrace} \,, \\
  u &= 0\,, \quad \text{for $x \in \Omega_1 \cup \Omega_2$.}
\end{aligned}
\end{equation}
To ensure that the area of the rectangular
region is $\pi$ and that the rectangular traps have the same area as
the circular traps in the elliptical region, we impose that
\begin{align}\label{SkinnyEllp_Cond_for_Area}
    4a_0\,b_0  = \pi  \quad \mbox{and} \quad
   4 \varepsilon_0 \,b_0 = \pi \,\varepsilon^2 \,.
\end{align}
The PDE \eqref{SkinnyEllp_2Traps_Prob} has a 1-D solution that is even
in $x$, namely
   $u_1(x) \equiv \frac{1}{2D}\big((x_0 - \varepsilon)^2 - x^2\big)$
   for $0 \leq x \leq x_0 - \varepsilon$,
   and
   $u_2(x) \equiv  \frac{1}{2D} \big{[} x(2a_0 - x) +
         (x_0 + \varepsilon_0) ( x_0 + \varepsilon_0 - 2a_0) \big{]}$
   for $x_0 + \varepsilon \leq x \leq a_0$.
Then, we calculate $I_1=\int_{0}^{x_0-\varepsilon} u_1 \, \text{d}x$ and
$I_2=\int_{x_0+\varepsilon}^{a_0} u_2 \, \text{d}x$, and observe that the
average MFPT is given by
$\overline{u} = 4b_0 (I_1+I_2) / \!\left(\pi (1-2\varepsilon^2)\right)$.
We get
\begin{equation}\label{SkinnyEllp_AveMFPT}
\overline{u} = \frac{4\,b_0}{D \pi (1 - 2\varepsilon^2)} \Big{[}  {\left(a_{0} - 2 \,
\varepsilon_{0}\right)} x_{0}^{2} - {\left(a_{0}^{2} - 2 \, a_{0}
\varepsilon_{0}\right)} x_{0} + \frac{1}{3} \, a_{0}^{3} - a_{0}^{2}
\varepsilon_{0} + a_{0}\varepsilon_{0}^{2} - \frac{2}{3} \, \varepsilon_{0}^{3} 
 \Big{]}.
\end{equation}
The optimal locations of the traps are found
by minimizing $\overline{u}$ with respect to $x_0$. This yields
\begin{equation}\label{SkinnyEllp_OptLoc}
  x_{0\, {\textrm{opt}} } = \frac{a_0}{2} =  \frac{\pi}{8b_0}\,, \quad \mbox{and}
  \quad \overline{u}_{\textrm{opt}} = \frac{\pi^2}{192\, D \,b_{0}^{2}}
  \Big{(} 1 - 4 \, \varepsilon^{2} + \mathcal{O}(\varepsilon^4) \Big{)}\,.
\end{equation}
Here we used $a_0 = \pi/(4\,b_0)$ and
$\varepsilon_0={\pi\varepsilon^2/(4b_0)}$ as given in
\eqref{SkinnyEllp_Cond_for_Area}.

As one would expect, the optimal location in \eqref{SkinnyEllp_OptLoc}
is the point at which the area of the half-rectangle
$[0, a_0] \times [-b_0, b_0]$ is divided into two equal pieces. This
equal area rule will minimize the capture time of the Brownian
particle in the half-rectangle.

Next, we relate this optimal
MFPT in the thin rectangular domain to that in the thin elliptical
domain.  One possibility is to set $a_0 = a$, so that the length of the
rectangular domain and the ellipse along the major axis are the
same. From the equal area condition \eqref{SkinnyEllp_Cond_for_Area},
we obtain $b_0 = (\pi b)/4$, where $b$ is the semi-minor axis of the
ellipse. For this choice \eqref{SkinnyEllp_OptLoc} becomes
\begin{equation}\label{SkinnyEllp_CaseI}
  x_{0\, {\textrm{opt}} } =   \frac{1}{2b} \quad \mbox{and} \quad
  \overline{u}_{\textrm{opt}} \approx  \frac{1}{12\, D \,b^{2}}  \Big{(} 1 - 4 \,
  \varepsilon^{2} + \mathcal{O}(\varepsilon^4) \Big{)}\,; \qquad
  \mbox{Case I:} \,\,\, (a=a_0) \,.
\end{equation}
A second possibility is to choose $b_0 = b$, so that the width of
the thin rectangle and ellipse are the same. From
\eqref{SkinnyEllp_OptLoc} this yields that
\begin{equation}\label{SkinnyEllp_CaseII}
  x_{0\, {\textrm{opt}} } =     \frac{\pi}{8b} \quad \mbox{and} \quad
  \overline{u}_{\textrm{opt}} \approx   \frac{\pi^2}{192\, D \,b^{2}}
  \Big{(} 1 - 4 \, \varepsilon^{2} + \mathcal{O}(\varepsilon^4) \Big{)}\,;
  \qquad   \mbox{Case II:} \,\,\, (b=b_0) \,.
\end{equation}
Both estimates \eqref{SkinnyEllp_CaseI} and \eqref{SkinnyEllp_CaseII}
are applicable only when $b\ll 1$. Together they suggest that the
optimal locations of the traps and the optimal average MFPT for the
thin ellipse satisfy the scaling laws
$x_{0\, {\textrm{opt}} }=\mathcal{O}(b^{-1})$ and
$\overline{u}_{\textrm{opt}}=\mathcal{O}(b^{-2})$, respectively.

\begin{figure}
    \centering
    \begin{subfigure}[b]{0.42\textwidth}
        \includegraphics[width=\textwidth]{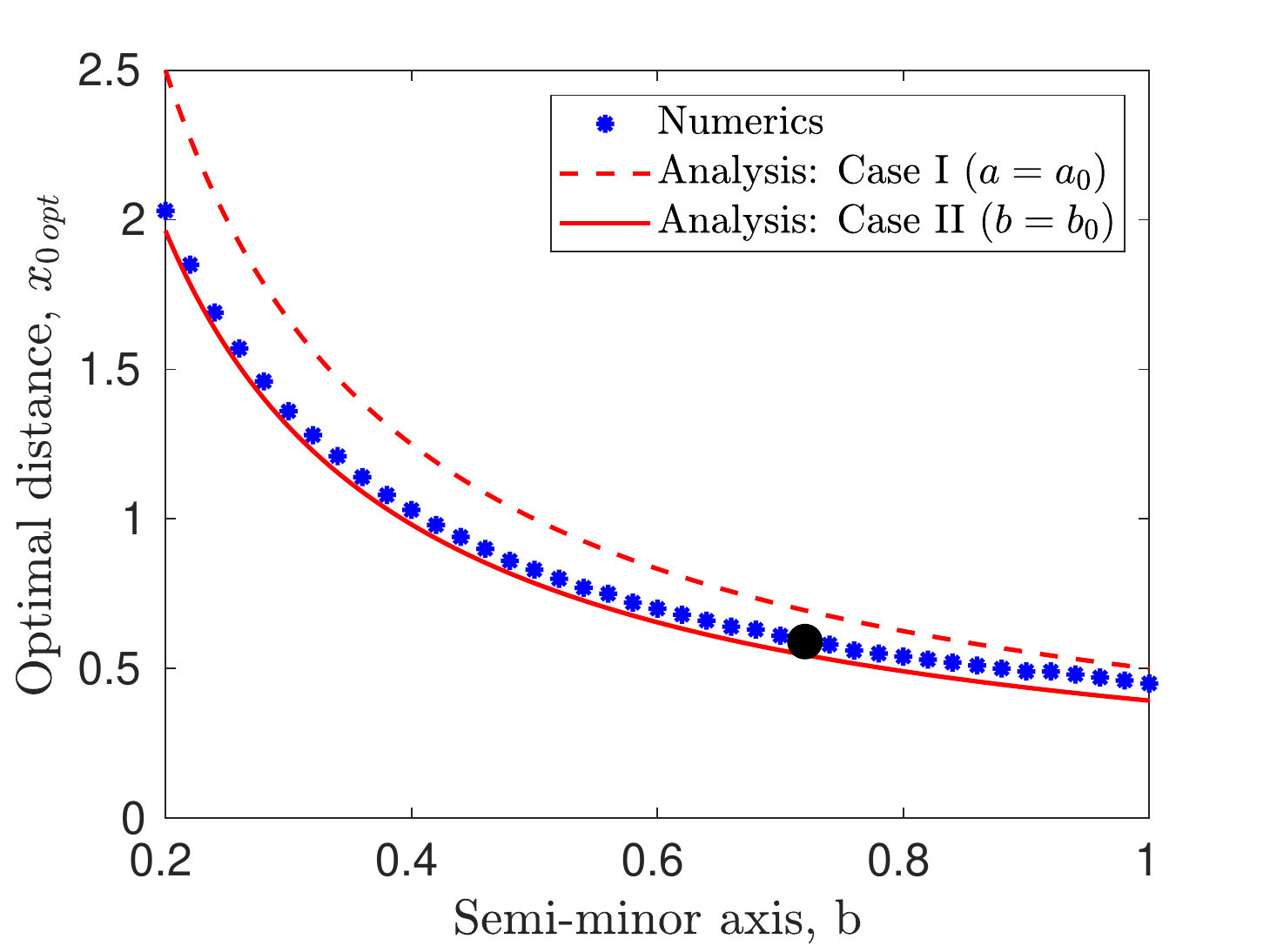}
        \caption{Optimal location of traps}
        \label{SkinnyEllp_Opt_X0_Analysis}
    \end{subfigure} \qquad 
    \begin{subfigure}[b]{0.42\textwidth}
        \includegraphics[width=\textwidth]{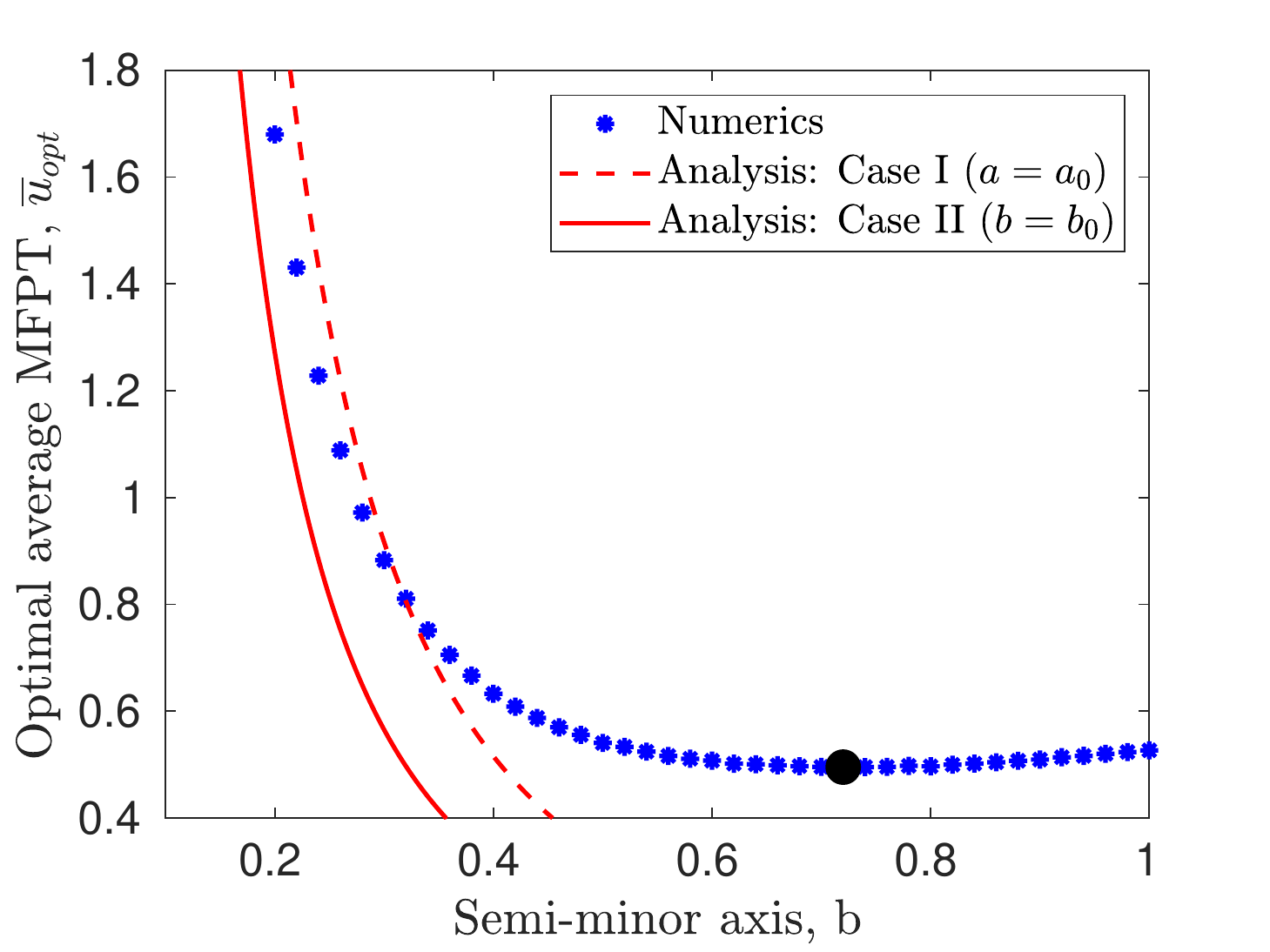}
        \caption{Optimal average MFPT }
        \label{SkinnyEllp_Opt_MFPT_Analysis}
    \end{subfigure}
    \vspace*{-3ex}
    \caption{Two traps in an ellipse:
      the thin-rectangle approximations (valid for small $b$) of
      \eqref{SkinnyEllp_CaseI} (dashed lines) and
      \eqref{SkinnyEllp_CaseII} (solid lines)
      are compared with the full numerical results (asterisks)
      of Figure~\ref{Ellpt_MFPT_Opt},
      for the optimal trap locations (a) and optimal average MFPT (b).
      The dot is the globally optimal average MFPT found earlier.}
    \label{SkinnyEllp_MFPT_Opt_Analysis}
\end{figure}

Figure~\ref{SkinnyEllp_MFPT_Opt_Analysis} compares the full numerical
results for the optimal trap locations and optimal average MFPT of
Figure~\ref{Ellpt_MFPT_Opt} with the analytical results given in
\eqref{SkinnyEllp_CaseI} and \eqref{SkinnyEllp_CaseII} with $D = 1$.
We observe that the
two simple analytical results provide relatively decent approximations
to the full numerical results for small $b$.
More specifically, we observe
that the two limiting
approximations \eqref{SkinnyEllp_CaseI} and \eqref{SkinnyEllp_CaseII}
provide upper and lower bounds for the full numerical results,
respectively.  When $a_0 = a$, \eqref{SkinnyEllp_CaseI}
is seen to overestimate both the optimal location of the trap and the optimal
average MFPT, when $b\ll 1$.  This is because when $a_0 = a$, the
equivalent rectangular region is thinner than the elliptical region
near the center of the region. As a result, the optimal location of
the traps for the elliptical region are closer to the center of the
domain than for the rectangular region. This effect will overestimate
the optimal average MFPT.  Alternatively, when $b_0 = b$,
\eqref{SkinnyEllp_CaseII} is seen to underestimate both the
optimal location of the traps and the optimal average MFPT, when
$b\ll 1$.  For this choice, the length of the equivalent rectangular
region on the horizontal axis is shorter than the length of the major
axis of the elliptical region.  Because the optimal location
of the trap when $b\ll 1$ depends mostly on the horizontal axis, and
the rectangular region is shorter than the elliptical region, the
results given by \eqref{SkinnyEllp_CaseII} will be underestimates.

\subsubsection{A perturbation approach for long thin domains}\label{sec:long_thin}

Next, we develop a more refined asymptotic approach, which
incorporates the shape of the domain boundary, to estimate the
optimal average MFPT in a thin ellipse that contains three circular
traps of radius $\varepsilon$. One trap is at the center of the
ellipse while the other two are centered on the major axis symmetric
about the origin. Recall that a pattern of three colinear traps was
shown in Figure~\ref{fig:three_traps_ellipse_pso} of
\S~\ref{Static_3Trap_Ellipse} to provide a global minimum of the
average MFPT in a thin ellipse.  Our goal here is to approximate the
optimal trap locations and corresponding MFPT for this pattern.

Although our theory is developed for a class of long thin
domains, we will apply it only to an elliptical domain.  For
$\delta\ll 1$, we consider the family of domains
\begin{equation}
\Omega = \{ (x,y) \mid -1/\delta < x < 1/\delta\,, -\delta F(\delta x)
< y < \delta F(\delta x) \}\,.
\end{equation}
We assume that the boundary profile $F(X)$ satisfies $F(X) > 0$ on
$|X|<1$, with $F(\pm 1)=0$. We label $\Omega_a$ as the union of the
traps that are located at $\{(0,0), (\pm x_0,0)\}$. The MFPT problem is to solve
\begin{equation}
  \partial_{xx} u + \partial_{yy} u = -1 / D\,, \quad \mbox{in}
  \quad \Omega\setminus\Omega_a\,; \quad \partial_n u = 0\,, \,\,\
  \mbox{on} \,\,\, \partial\Omega\,;
  \quad u = 0 \,,\,\,\, \mbox{on} \,\,\, \partial\Omega_a\,.
\label{eqn:three_traps_mfpt}
\end{equation}
Using a perturbation analysis, valid for long thin domains with
$\delta\ll 1$, in \S~\ref{supp:long_thin} of the Supplementary Material
we show that
$u(x,y)\sim \delta^{-2}U_0(\delta x) + {\mathcal O}(\delta^{-1})$,
where $U_0(X)$, with $x={X/\delta}$ and $d={x_0/\delta}$, satisfies the following
multi-point boundary value problem (BVP) on $|X|<1$:
\begin{equation}\label{sec:long_u0}
  \left[F(X)U_0^{\prime} \right]^{\prime} = -F(X) / D\,, \,\,\,
  \mbox{on} \,\,\, (-1,1)\setminus\{0,\pm d\}\,; \quad U_0 = 0\,\,\,\,
  \mbox{at} \,\,\, X = 0, \pm d\,,
\end{equation}
with $U_0$ and $U_0^{\prime}$ bounded as $X\to \pm 1$, where
$F(\pm 1)=0$. Observe in this formulation that the traps are replaced
by zero point constraints for $U_0$.

Although the solution to \eqref{sec:long_u0} can be reduced to
quadrature for an arbitrary $F(X)$, we will find an explicit solution
for the case of a thin elliptical domain of area $\pi$ with boundary
$\frac{x^2}{a^2} + \frac{y^2}{b^2} = 1$, where $a = 1/\delta \gg 1$
and $b = \delta\ll 1$. For this case, $F(X) = \sqrt{1-X^2}$ and we readily
obtain, after performing some quadratures, that
\begin{subequations}
\begin{equation}\label{thin:u0_solve_1}
U_0(X) = \begin{cases}
  -\frac{1}{4D} \left[ (\sin^{-1}{X})^2 + X^2 + \pi
    \sin^{-1} X + c_2 \right]\,, \quad -1\leq X \leq -d\,, \\
  -\frac{1}{4D} \left[ (\sin^{-1}{X})^2 + X^2 + c_1
    \sin^{-1} X \right]\,, \quad -d\leq X \leq 0 \,,\\
    U_0(X) = U_0(-X)\,, \quad 0 \leq X \leq 1\,,
\end{cases}
\end{equation}
where $c_1$ and $c_2$ are given by
\begin{equation}\label{thin:u0_solve_2}
  c_2 = \pi \sin^{-1}{d} - d^2 - \left( \sin^{-1}{d}\right)^2 \,, \qquad
  c_1 = \frac{ d^2 + (\sin^{-1}{d})^2}{\sin^{-1}{d}}\,.
\end{equation}
\end{subequations}

In terms of $U_0(X)$, the average MFPT for \eqref{eqn:three_traps_mfpt}
is estimated for $\delta\ll 1$ by
\begin{equation}\label{thin:ave}
  \overline{u} \sim \frac{1}{\pi} \int_{-1/\delta}^{1/\delta}
  \int_{-\delta F(\delta x)}^{\delta F(\delta x)}  u \, \text{d}x \text{d}y \sim
  \frac{4}{\pi \delta^2} \int_{-1}^{0} F(X) U_0(X) \, \text{d}X \,.
\end{equation}
For the ellipse, where $F(X)=\sqrt{1-X^2}$, we set
\eqref{thin:u0_solve_1} in \eqref{thin:ave} and integrate to get
\begin{subequations}
\begin{equation}\label{thin:ave_ell}
  \overline{u} \sim \frac{1}{\pi D\delta^2} \left( {\mathcal H}(d)
    -\int_{-1}^{0}\!\sqrt{1-X^2} \left[ \left( \sin^{-1}{X}\right)^2 + X^2 +
    \pi\sin^{-1}{X} \right]
    \text{d}X \! \right).
\end{equation}
Here ${\mathcal H}(d)$ is defined in terms of $c_1$ and $c_2$, as
given in \eqref{thin:u0_solve_2}, by
\begin{equation}\label{thin_ave_H}
  {\mathcal H}(d) \equiv \frac{c_2}{2} \!\left[ d\sqrt{1-d^2} + \sin^{-1}{d}
    \right] - \frac{c_2\pi}{4}  + (\pi-c_1)
  \int_{-d}^{0}\!\!\left(\sin^{-1}{X}\right) \! \sqrt{1-X^2}\, \text{d}X \,.
\end{equation}
\end{subequations}

To estimate the optimal average MFPT we minimize ${\mathcal H}(d)$ in
\eqref{thin_ave_H} on $0<d<1$. We compute that
$d_{\textrm{opt}}\approx 0.5666$. Then, by evaluating
${\mathcal H}(d_{\textrm{opt}})$, \eqref{thin:ave_ell} determines the
optimal value of $\overline{u}$. In terms of the original $x$
variable, and recalling $b=\delta$, we have for the thin ellipse that
the optimal trap location and optimal average MFPT satisfy
\begin{equation}\label{3trap:skin}
  x_{0 \textrm{opt}}\sim {0.5666/b} \,, \qquad \overline{u}_{\textrm{opt}} \sim
  {0.0308/( b^2 D)} \,, \qquad \mbox{for} \quad b\ll 1\,.
\end{equation}
In Figure~\ref{3SkinnyEllp_MFPT_Opt_Analysis} we show favorable comparisons
between these thin domain asymptotic results in \eqref{3trap:skin}
and the full numerical results computed using the CPM,
for the optimal trap locations and optimal average MFPT.
We also show upper and lower bounds derived using approximation via thin
rectangular domains, similar to \S~\ref{sec:thin}.
These bounds are given by \eqref{3SkinnyEllp_CaseI}
and~\eqref{3SkinnyEllp_CaseII} of \S~\ref{thin:rec} of the Supplementary Material.
We note that the thin domain
asymptotic results \eqref{3trap:skin} provide a closer agreement
with the full numerical results than do the bounds based on rectangles.

\begin{figure}
  \centering
    \makebox{%
      \raisebox{0.5ex}{\small{(a)}}
      \includegraphics[width=.39\textwidth]{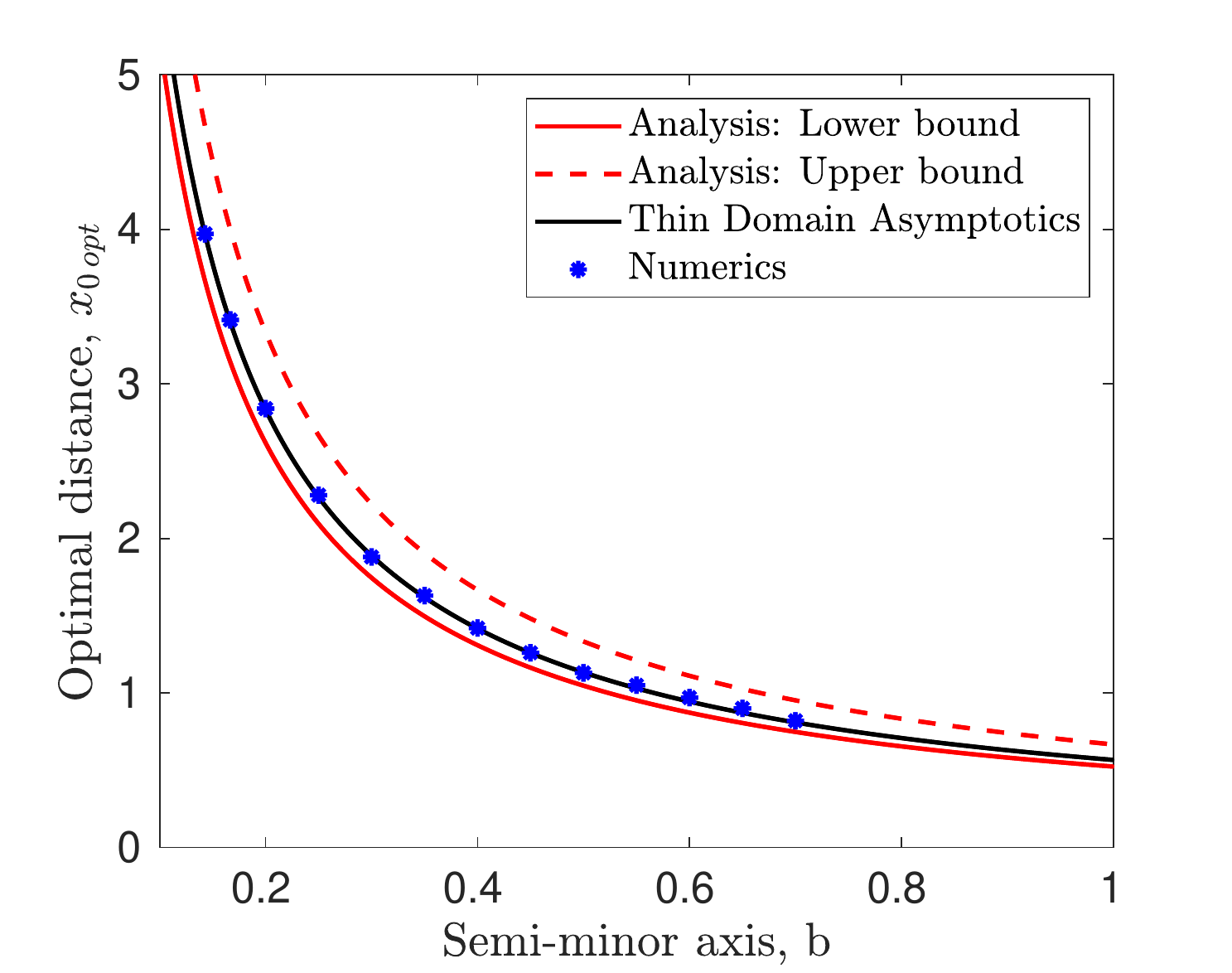}
      \phantomsubcaption
      \label{SkinnyEllp_3Opt_X0_Analysis}
    } \quad
    \makebox{%
      \raisebox{0.5ex}{\small{(b)}}
      \includegraphics[width=.39\textwidth]{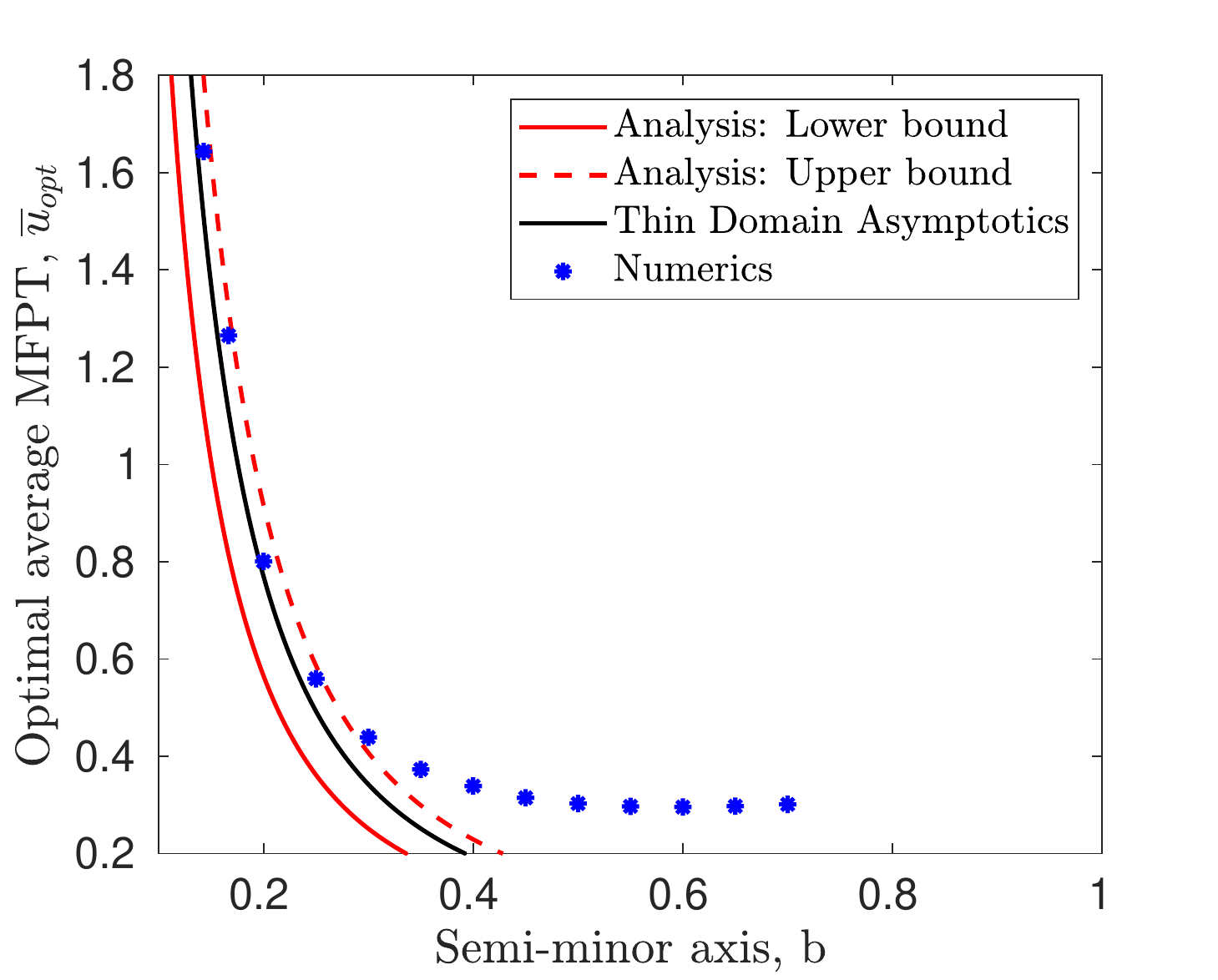}
      \phantomsubcaption
      \label{SkinnyEllp_3Opt_MFPT_Analysis}
    }
    \vspace*{-1ex}
    \caption{Three traps in an ellipse:
      optimal trap location (a) and optimal average MFPT (b)
        for a thin elliptical domain of area $\pi$ and semi-minor axis
        $b \ll 1$ that contains a trap centered at the origin and
        additional traps on either side of the origin at a distance
        $x_0$ from the center. The three traps are circular of radius
        $\varepsilon=0.05$.  The thin domain asymptotic results in
        \eqref{3trap:skin} (solid dark lines) are compared with full
        numerical results (asterisks) and the upper (red dashed
        lines) and lower (red solid lines) bounds based on
        thin-rectangle approximation.
      }
  \label{3SkinnyEllp_MFPT_Opt_Analysis}
\end{figure}

\subsection{Asymptotics of a rapidly rotating trap}
\label{sec:fastrottrap}

In the unit disk, we analyze the two-trap problem of
\S~\ref{sec:TwoTrapsDisk} in the limit where the moving trap on the
ring rotates about the center of the disk at an angular frequency
$\omega \gg \mathcal{O}(\eta^{-1})$, where $\eta\ll 1$ is the radius
of the moving trap. The fixed trap at the center of the disk is chosen
to have a possibly different radius $\varepsilon\ll 1$. In the high
frequency limit $\omega\gg 1$, the fast moving trap creates an
absorbing band along its entire path as shown in
Figure~\ref{Opt_r0_TwoTraps}. For $\omega\gg 1$, we will calculate
asymptotically the optimal radius of rotation of the moving trap in
terms of $\eta$ and $\varepsilon$.

\begin{figure}
  \centering
    \makebox{
      \raisebox{2ex}{\small{(a)}}
      \raisebox{2.8ex}{%
        \includegraphics[width=0.31\textwidth]{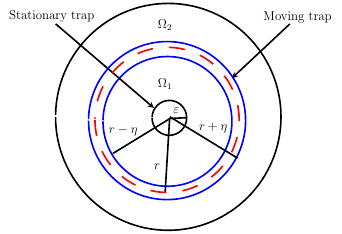}}
      \phantomsubcaption
      \label{Opt_r0_TwoTraps:a}
    }\qquad
    \makebox{
      \raisebox{2ex}{\small{(b)}}
      \includegraphics[width=0.31\textwidth]{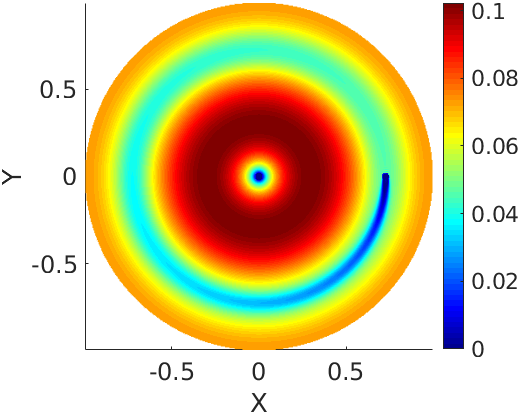}
    }
  \vspace*{-2ex}
  \caption{Optimizing the radius of rotation for a fast
      rotating trap in the unit disk that has a stationary trap at its
      center.  Left: schematic plot showing the two absorbing traps in the
      disk.  Right: MFPT for a Brownian particle with trap radii
      $\varepsilon = \eta = 0.02$. The moving trap rotates at an
      angular frequency of $\omega = 2000$ on a ring of radius
      $r = 0.727$. Computed using the CPM with mesh
      size $\Delta x = 0.005$.}
  \label{Opt_r0_TwoTraps}
\end{figure}

We formulate the $\omega\to\infty$ limiting problem as a stationary
trap problem, where the absorbing band created by the rotating trap is
used to partition the unit disk into two regions, as shown
in Figure~\ref{Opt_r0_TwoTraps}.
In the high-frequency limit $\omega\gg 1$, the limiting problem for the MFPT is to
solve the multi-point BVP
\begin{equation}\label{fast_mfpt}
\begin{split}
  &u_{\rho\rho} + \rho^{-1}u_{\rho} = -{1/D} \,, \quad \mbox{in} \quad
  \varepsilon \le \rho \le
  r-\eta\,, \quad \mbox{and} \quad r+\eta\le\rho <1 \,,\\
  & u =0 \quad \mbox{on} \quad \rho = \varepsilon\,,\,\, \rho= r - \eta
  \,,\,\, \rho = r + \eta\,; \quad \partial_\rho u = 0 \quad \mbox{on}
  \quad \rho=1\,,
\end{split}
\end{equation}
for $u\equiv u(\rho)$. Here, we have imposed zero-Dirichlet boundary
conditions on the inner and outer edges of the absorbing band created
by the fast moving trap.

As detailed in \S~\ref{supp:anal_3} of the Supplementary Material, we
first solve (\ref{fast_mfpt}) for $u$, and then calculate the average
MFPT $U(r)$ over the unit disk. This yields that
\begin{equation}\label{Analy_Int}
  U(r) = \frac{C}{\log\!\left(\frac{\varepsilon}{\alpha}\right)}
  \Big[ \alpha^{4} - 2 \, \alpha^{2} \varepsilon^{2} +
    \varepsilon^{4} + {\left(\alpha^{4} - \beta^{4} - \varepsilon^{4}
        + 4 \, \beta^{2} - 4 \, \log\beta - 3\right)}
    \log\!\left(\tfrac{\varepsilon}{\alpha}\right)\!\Big],
\end{equation}
where $\alpha = r - \eta$, $\beta = r + \eta$, and $C$ is a constant
independent of the radius of rotation $r$. To determine the optimal
$r = r_{\textrm{opt}}$, we calculate numerically the root of
$U^{\prime}(r_{\textrm{opt}})=0$, which is given by the zero of
\eqref{Deriv_AveMFPT} in the Supplementary Material.
\begin{figure}
  \hfill
  \makebox{
    \raisebox{1ex}{\small{(a)}}
    \includegraphics[width=0.38\textwidth]{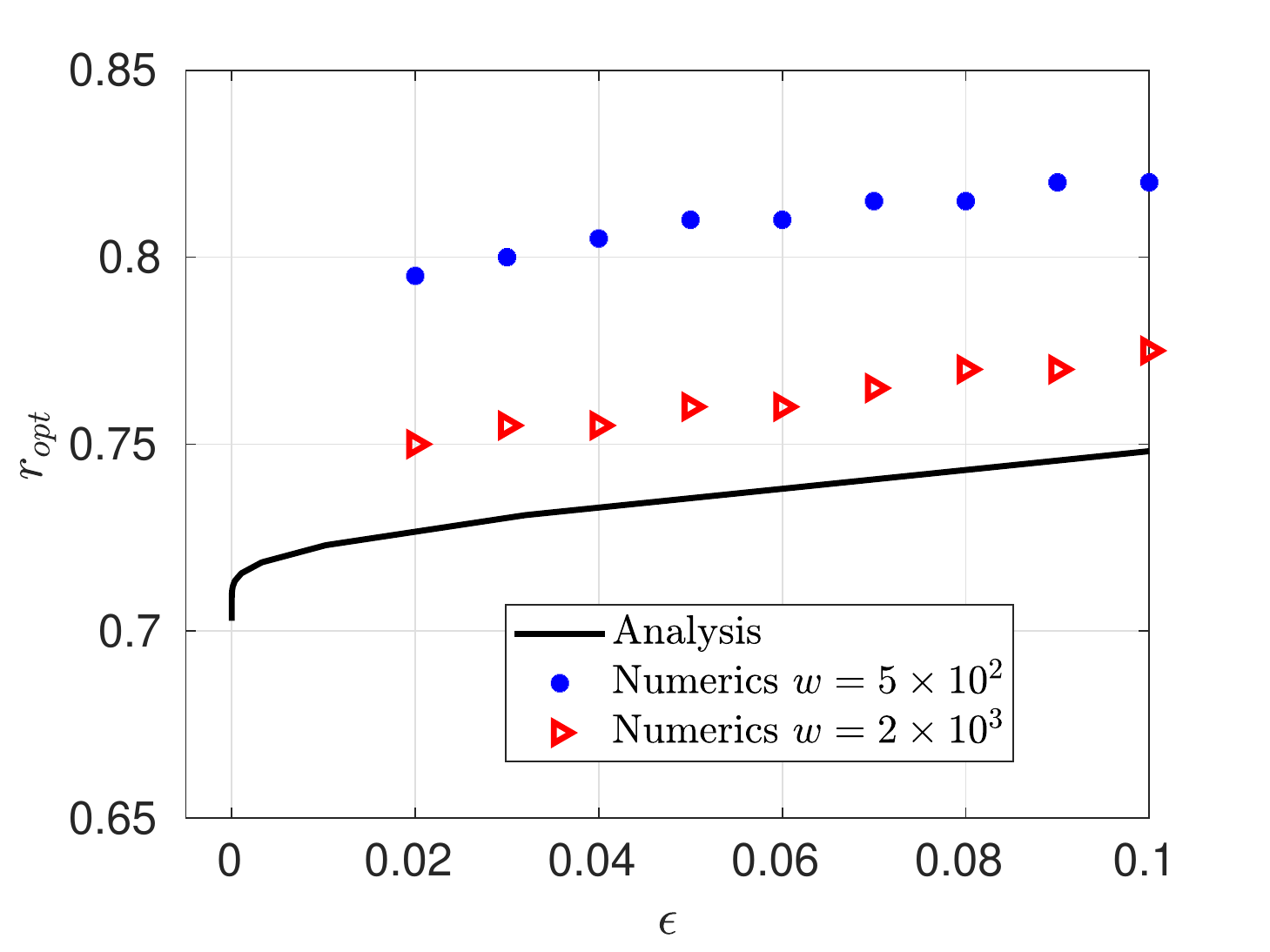}
    \phantomsubcaption
    \label{TwoTrap_Disk_Analysis_Eps}
  }
  \hfill
  \makebox{
    \raisebox{1ex}{\small{(b)}}
    \includegraphics[width=0.38\textwidth]{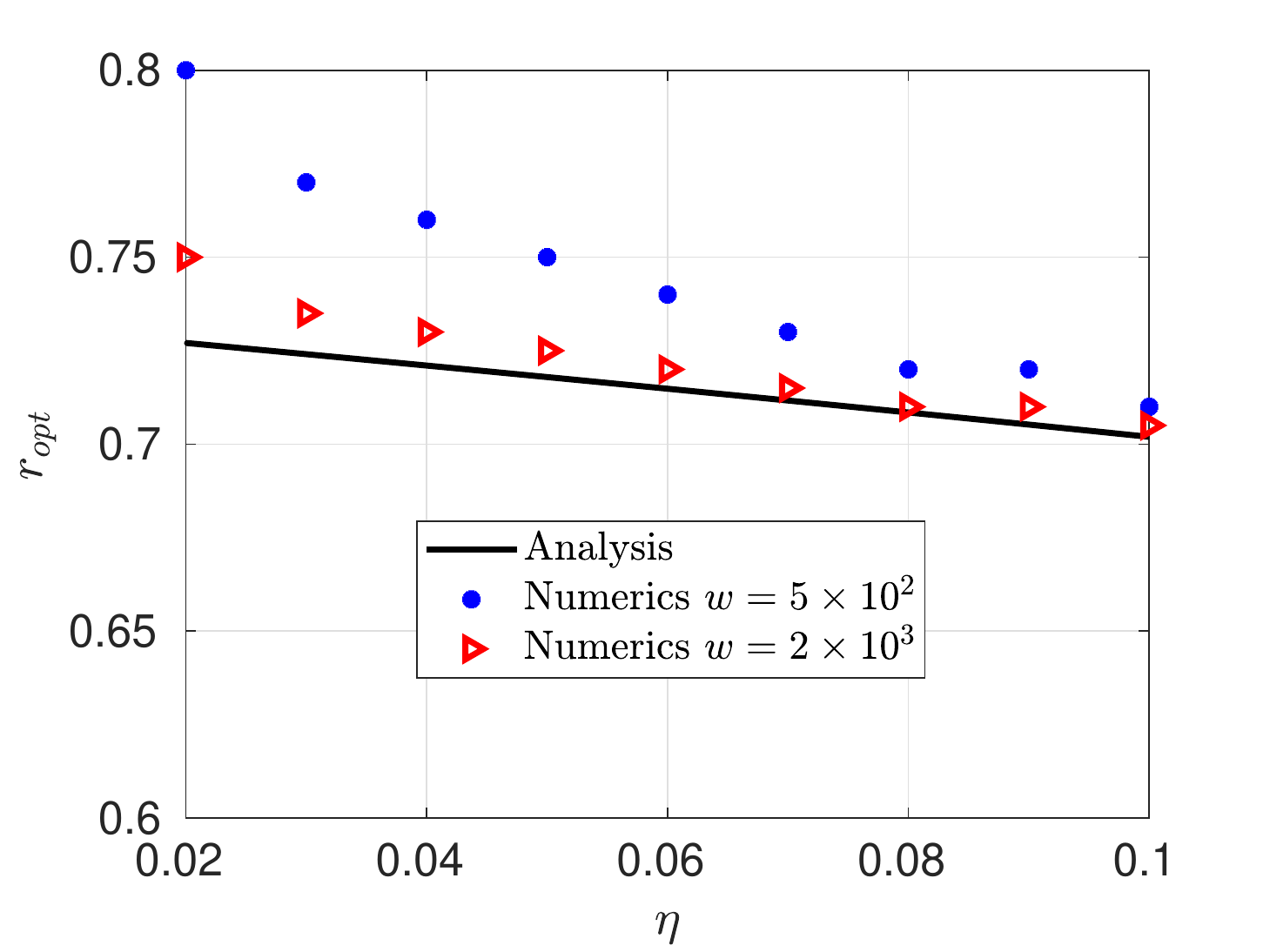}
    \phantomsubcaption
    \label{TwoTrap_Disk_Analysis_Eta}
  }
  \hfill
  \vspace*{-1ex}
  \caption{Optimal radius of rotation $r_{\textrm{opt}}$ for
      an absorbing trap of radius $\eta$ moving at constant angular
      frequency $\omega$ on a ring in a unit disk that contains an
      additional absorbing trap of radius $\varepsilon$ at the
      center of the disk.
      In (a) we fix $\eta=0.02$ and in (b) we fix $\varepsilon=0.02$.
      Numerical results (symbols) get closer to the asymptotic
      result (solid curve) for larger values of $\omega$.}
      \label{TwoTrap_Disk_Analysis}
\end{figure}
In Figure~\ref{TwoTrap_Disk_Analysis}, we show a comparison between
this asymptotic result for 
$r_{\textrm{opt}}$ and full numerical optimization results at the two
frequencies $\omega = 500$ and $\omega = 2000$, as obtained by using
the CPM with $\Delta x = 0.005$ and $\Delta r = 0.001$. As expected,
the asymptotic result, which is valid for $\omega \to \infty$, is seen to
agree more closely with the full numerical results when $\omega=2000$
than for $\omega=500$.

In Figure~\ref{TwoTrap_Disk_Analysis_Eps}, we show how the optimal
radius of rotation of a moving trap of radius $\eta = 0.02$ depends on
the radius $\varepsilon$ of the stationary trap centered at the
origin.  We observe that the optimal rotating trap moves closer to the
boundary of the unit disk as $\varepsilon$ increases. Since this
increase would reduce the MFPT for particles between the two traps,
the rotating trap tends to move closer to the boundary of the domain
in order to reduce the MFPT for particles between the moving trap and
the boundary of the unit disk. This in turn reduces the overall
average MFPT.  Alternatively, as the static trap radius shrinks, the
optimal radius of rotation decreases and, in the limit
$\varepsilon \to 0$, the optimal radius converges to
$r_{\textrm{opt}} = 0.7028$. Moreover,
$r_{\textrm{opt}} \to {1/\sqrt{2}}\approx 0.707$ as $\eta\to 0$. This
limiting radius for $\eta\to 0$ is the one that divides the unit disk
into two regions of equal area, and is consistent with that given in
equation (2.4) of \cite{tzou2015mean}.

In Figure~\ref{TwoTrap_Disk_Analysis_Eta}, we fix the radius of the
stationary trap at $\varepsilon=0.02$ and show how the optimal radius
of rotation of the moving trap depends on its radius $\eta$. For this
case, $r_{\textrm{opt}}$ decreases as $\eta$ increases.

\section{Discussion}\label{sec:discuss}

We have developed and implemented a Closest Point Method (CPM) to
numerically compute the average MFPT for a Brownian particle in a
general bounded 2-D confining domain that contains small stationary
circular absorbing traps.  A CPM approach was also formulated to
compute the average MFPT in domain that has a mobile
trap moving periodically along a concentric path within the
domain. Through either a refined discrete sampling procedure or from
a particle swarm optimizer routine \cite{kennedy2010}, optimal trap
configurations that minimize the average MFPT were identified
numerically for various examples.

For the stationary trap problem with a small number of traps, some
optimum trap configurations that minimize the average MFPT were
computed for a class of star-shaped domains and for an elliptical
domain with arbitrary aspect ratio. In particular, we have identified
numerically the optimum arrangement of three traps in an ellipse of a
fixed area as its boundary is deformed continuously.  Under this
boundary deformation we have shown that the optimal three-trap
arrangement changes from a ring-pattern of traps in the unit disk to a
colinear pattern of traps when the ellipse has a sufficiently large
aspect ratio.  Two distinct perturbation approaches were used in
\S~\ref{sec:skinnyellipse} to approximate the optimal trap locations and
optimal average MFPT for such a colinear trap pattern in a long, thin,
ellipse.

For a class of near-disk domains with boundary
$r=1+\sigma \cos(\mc{N}\theta)$ and $\sigma\ll 1$, we have used a
perturbation approach to calculate the leading-order and 
${\mathcal O}(\sigma)$ correction term for the average MFPT for a
pattern of $m$ equally-spaced traps on a ring (i.e.~ring pattern).
When ${\mc N}=k m$, for $k\in\mathbb{Z}^{+}$, we have shown
analytically from this formula that the optimal trap locations on a
ring must coincide with the maxima of the boundary
deformation. Explicit results for the perturbed optimal ring radius
are derived. In contrast, when ${{\mc N}/ m}\notin\mathbb{Z}^{+}$, we
have shown analytically that the problem of optimizing the average
MFPT for a ring pattern of traps is degenerate in the sense that
the ${\mathcal O}(\sigma)$ correction to the average MFPT vanishes for
{\em any} ring radius.  An open problem is to develop a hybrid
asymptotic-numerical approach to identify optimal trap configurations
allowing for arbitrary trap locations under an arbitrary, but small,
star-shaped boundary deformation of the unit disk given by
$r=1+\sigma h(\theta)$, where $\sigma\ll 1$ and $h(\theta)$ is a
smooth $2\pi$ periodic function. Such a general approach could be
applied to predict the initial change in the optimal locations of
three traps in the ellipse as computed using the CPM in
Figure~\ref{fig:three_traps_ellipse_pso}.

An interesting mobile trap problem is path optimization: for a given
domain, what is the optimal path for a trap to follow, subject to
e.g., an arclength constraint?  We can solve this problem
numerically using the techniques developed here
using constrained optimization.

Further improvements to our numerical method are possible.
Our periodic moving trap problem involves relaxing over many periods;
as a practical matter, we can
decrease the expense by running the algorithm using an initially
coarse spatial grid.  After the solution has converged (in time) on
the coarse grid, we can project the solution at time $t = NT$ onto a
finer spatial grid and repeat.

Finally, we note the numerical algorithms described here can be
applied for traps on manifolds where the Laplacian is replaced with
the Laplace--Beltrami operator.

\section*{Acknowledgements}
Colin Macdonald and Michael Ward were supported by NSERC
Discovery grants. Tony Wong was partially supported by a UBC 4YF.
The authors thank Justin Tzou for discussions that lead to the
time-relaxation algorithm for moving trap problems.

\bibliographystyle{plain}
\bibliography{Reference}


\cleardoublepage
\begin{appendix}
\renewcommand{\theequation}{\Alph{section}.\arabic{equation}}
\setcounter{equation}{0}
\setcounter{page}{1}
\headers{Supplementary Material: Simulation and Optimization of MFPT}{S. Iyaniwura, T. Wong, M. J. Ward, and C. B. Macdonald}

\begin{center}
  \textbf{\uppercase{Simulation and Optimization of Mean First Passage Time Problems
      in 2-D Using Numerical Embedded Methods and Perturbation Theory:\\
      Supplementary Material}
  }

  \medskip

  \small{Sarafa Iyaniwura, Tony Wong, Michael J. Ward, and Colin~B.~Macdonald}
\end{center}

\thispagestyle{empty}

\bigskip

\par\refstepcounter{section}

\subsection{Asymptotic analysis of the MFPT for a perturbed
  unit disk}\label{supp:anal_1}

We summarize the derivation of the result given in
Proposition~\ref{u_bar_prop_main} of
\S~\ref{sec:asymp_perturbed_unit_disk}.

We start by studying the leading-order problem \eqref{LeadingOrder}
using the method of matched asymptotic expansions.  In the inner
region near each of the traps, we introduce the inner variables
$\v{y} = \varepsilon^{-1}(\x - \x_j)$ and
$u_0(\x) = v_j(\varepsilon \v{y}+\x_j)$ with $ \rho = |\v{y}| $, for
$j = 0, \dots, m-1$.  Upon writing 
\eqref{LeadingOrder} in terms of these variables, we have for
$\varepsilon \to 0$ that for each $j=0,\ldots,m-1$
\begin{equation}\label{LeadingOrderInner}
\begin{split}
 \Delta_{\rho}\, v_j  & = 0 \,, \quad \rho > 1\,;\qquad
 v_j =0 \quad   \mbox{on} \quad \rho = 1\,, 
\end{split}
\end{equation} 
where
$\Delta_{\rho} \equiv \partial_{\rho \rho} + \rho^{-1}
\partial_{\rho}$. The radially symmetric solution is
  $v_j=A_j\log\rho$, where $A_j$ for $j = 0, \dots, m-1$ are
  constants to be determined.  By matching the inner solution to the
outer solution we obtain the singularity behavior of the outer
solution $u_0$ as $\x \to \x_j$ for $j = 0, \dots, m-1$.
This leads to the following problem for $u_0$:
\begin{subequations}\label{LeadingOrder_CompleteOuter}
\begin{align}
  D\, \nabla^2 u_0  = -1\,, \quad \x & \in \Omega\setminus
      \lbrace{\x_0,\ldots,\x_{m-1}\rbrace}\,;\quad \partial_r u_{0} =0\,,
    \quad \x \in \partial \Omega\,; \label{LeadingOrder_CompleteOuterA}\\
  u_0 \sim A_j  \log |\x - \x_j| &+ A_j/\nu \quad \text{as} \quad\x \to \x_j
       \qquad j = 0, \dots, m-1\,.\label{LeadingOrder_CompleteOuterB}
\end{align}
\end{subequations} 
Here $\nu \equiv -1/\log\varepsilon$.  In terms of a
  Dirac forcing, this problem for $u_0$ is equivalent to
\begin{equation}\label{LeadingOuter_delta}
  \nabla^2 u_0  = -\frac{1}{D} + 2 \pi \sum_{j = 0 }^{m-1} A_j \delta(\x - \x_j)\,,
 \qquad \partial_r u_{0} =0\,,  \,\,\, \x \in \partial \Omega\,.
\end{equation}
From integrating \eqref{LeadingOuter_delta} over the unit disk, and using
the divergence theorem, we get
\begin{align}\label{SolvabilityCondition}
 \sum_{j=0}^{m-1} A_j  = \frac{|\Omega|}{2\pi D} \,.
\end{align}
Next, we introduce the Neumann Green's function $G(\x ; \x_j)$, 
which satisfies 
\begin{subequations}\label{GreenFunctionProb}
\begin{align}
  \nabla^2 G  &= \frac{1}{|\Omega|} - \delta(\x - \x_j)\quad \x \in \Omega\,;
 \quad \partial_n G =0\,, \quad \x \in \partial \Omega\,;
                \label{GreenFunctionProb_A}\\
  G  \sim -\frac{1}{2\pi}& \log{|\x - \x_j|} + R_j + o(1) \quad \text{as}
  \quad\x \to \x_j\,; \qquad \int_{\Omega} G \,\text{d}\x=0\,,
                           \label{GreenFunctionProb_B}
\end{align}
\end{subequations}
where $R_j \equiv R(\x_j)$ is the regular part of the Green's function
at $\x = \x_j$. In terms of this Green's function, we write the solution to
\eqref{LeadingOuter_delta} as
\begin{align}\label{SolutionOuterLead}
u_0 = -2 \pi \sum_{i=0}^{m-1}  A_i G(\x ; \x_i)  + \overline{u}_0 \,,
\end{align}
where $\overline{u}_0 = (1/|\Omega|)\int_{\Omega} u_0 \, \text{d}\x$
is the leading-order average MFPT. Expanding \eqref{SolutionOuterLead}
as $\x \to \x_j$ for each of the traps, and using the singularity
behavior of $G(\x ; \x_j)$ given in \eqref{GreenFunctionProb_B}, we
obtain for each $j=0,\ldots,m-1$ that
\begin{align}\label{SolutionOuterLead_Expand}
  u_0 \sim A_j \log{|\x - \x_j|}  -2 \pi A_j\,R_j  -2 \pi
  \sum_{i \neq j}^{m-1} A_i \, G(\x_j ; \x_i) + \overline{u}_0 \,.
\end{align}
The asymptotic matching condition in  this local behavior of the
outer solution  must agree with the behavior
\eqref{LeadingOrder_CompleteOuterB} as $\x \to \x_j$.  In this way, and
  recalling \eqref{SolvabilityCondition}, we obtain an algebraic
  system of equations for $\overline{u}_0, A_0, \dots, A_{m-1}$ given
  in matrix form as
\begin{align}\label{Alg_Matrix}
  ( I + 2\pi \nu \, \mathcal{G}) \mathcal{A} = \nu\, \overline{u}_0 \,\v{e}\,,
  \qquad \v{e}^T \mathcal{A} = \frac{|\Omega|}{2\pi D} \,.
\end{align} 
Here, $\v{e} \equiv (1,\dots,1)^T$, $\nu = -1/\log\varepsilon$,
$I$ is the identity matrix,
$\mathcal{A} \equiv (A_0, \dots, A_{m-1})^T$, and $\mathcal{G}$ is the
symmetric Green's matrix whose entries are defined in terms of the
Neumann Green's function of \eqref{GreenFunctionProb} by
\begin{align}\label{GreenMAtrix}
  (\mathcal{G})_{jj} = R_j \equiv R(\x_j)\,\,\, \text{for} \,\,\, i = j
  \quad \text{and} \quad (\mathcal{G})_{ij} = (\mathcal{G})_{ji}  =
  G(\x_i ; \x_j) \,\,\, \text{for} \,\,\, i \neq j \,.
\end{align} 
Since the traps are equally-spaced on the ring, the Green's matrix
$\mc{G}$ in \eqref{GreenMAtrix} is also cyclic. Thus, from
\cite[Prop 4.3]{kolokolnikov2005optimizing}, $\v{e}$ is an
eigenvector of $\mc{G}$ and we have that
\begin{equation}\label{GreenEigVal}
  \mc{G}\v{e} = \kappa_1 \v{e}\,, \qquad  
  \kappa_1 =  \frac{1}{2\pi } \left[-\log(m\,r_c^{m-1}) -
    \log(1 - r_c^{2m}) + m r_c^2 -\frac{ 3}{4}m \right]\,.
\end{equation} 
Then, by setting $\mc{A} = A_c \, \v{e}$, for some common value
$A_c$, in \eqref{Alg_Matrix}, we readily obtain
\begin{equation}\label{Alg_A}
  A_c = \frac{|\Omega|}{2\pi m D} = \frac{1}{2mD}, \quad \text{and} \quad
  \overline{u}_0 = \frac{1}{2m \nu D}(1 + 2\pi \nu \kappa_1)\,,
\end{equation} 
where $\kappa_1$ is given in \eqref{GreenEigVal}. Since
$\kappa_1 \equiv \kappa_1(r_c)$, any ring radius $r_c$ that minimizes
$\kappa_1$ also minimizes the leading-order average MFPT $\overline{u}_0 $.  
This yields the leading-order term in Proposition~\ref{u_bar_prop_main} of
\S~\ref{sec:asymp_perturbed_unit_disk}.

Next, we study the $\mathcal{O}(\sigma)$ problem for $u_1$ given in
\eqref{OrderSigma}. Following a similar approach used to solve the
leading-order problem, we construct an inner region close to each of
the traps and introduce the inner variables
$\v{y} = \varepsilon^{-1}(\x - \x_j)$ and
$u_1(\x) = V_j(\varepsilon \v{y}+\x_j)$ with $ \rho = |\v{y}|
$. From \eqref{OrderSigma}, this yields the leading-order inner problem
\begin{equation}\label{OrderSigmaInner}
 \Delta_{\rho}\, V_j   = 0 \,, \quad \rho > 1\,;\qquad 
 V_j =0\,, \quad \mbox{on} \,\,\, \rho = 1\,, 
\end{equation} 
where
$\Delta_{\rho} \equiv \partial_{\rho \rho} + \rho^{-1}
\partial_{\rho}$.  The radially symmetric solution is
$V_j = B_j \log\rho$, where $B_j$ for $j = 0, \dots, m-1$ are
constants to be determined.  Matching this inner solution to the outer
solution, we derive the singularity behavior of the outer solution
$u_1$ as $\x \to \x_j$ for $j = 0, \dots, m-1$.  In this way, from
\eqref{OrderSigma}, we obtain that $u_1$ satisfies
\begin{subequations}\label{OrderSigma_CompleteOuter}
\begin{align}
\nabla^2 u_1  & = 0\,, \quad \x\in \Omega \setminus \{\x_0,\dots,\x_{m-1}  \}\,;
 \quad \partial_r u_1  = -h u_{0rr} + h_{ \theta} u_{0 \theta}\,, \quad \text{on}
                  \quad r = 1; \label{OrderSigma_CompleteOuterA}\\
  u_1 \sim & B_j  \log{|\x - \x_j|} + B_j/\nu \quad \text{as} \quad
   \x \to \x_j, \qquad  j = 0, \dots, m-1\,,\label{OrderSigma_CompleteOuterB}
\end{align}
\end{subequations} 
where $\nu = -1/ \log\varepsilon$. To determine $u_1$, we
need to derive its boundary condition on $r=1$ using the leading-order
MFPT $u_0$ given in \eqref{SolutionOuterLead} in terms of the Neumann
Green's function $G(\x;\x_i)$.  To do so, we use the Fourier series
representation of the Neumann Green's function \eqref{GreenFunctionProb}
in the unit disk given by
 \begin{align}\label{GreenSolution}
   G(\x;\x_k) = \frac{1}{4\pi} (r^2 + r_c^2) - \frac{3}{8\pi} - \frac{1}{2\pi}
   \log {r_{>}} + \frac{1}{2\pi} \sum_{n=1}^{\infty} \frac{r^n_{<}}{n} (r_{>}^n
   + r_{>}^{-n})\cos(n(\theta - \theta_k))\,,
\end{align} 
where $\x = r\, e^{i \theta}$, $\x_k = r_c \,e^{i (2\pi k/m + \psi)}$,
$r_{>} = \max(r,r_c)$, and $r_{<} = \min(r,r_c)$. For any point $\x$ on
the boundary of the unit disk, $r_{>} = r = 1$, and $r_{<} = r_c
$. Upon substituting \eqref{GreenSolution} into
\eqref{SolutionOuterLead}, and using $A_c$ as given in \eqref{Alg_A},
we conclude that 
\begin{align}\label{U0_fulSol}
  u_0 = -2\pi & A_c \left[\frac{m}{4\pi}(1 + r_c^2) - \frac{3m}{8\pi} + \frac{1}{\pi} \sum_{n=1}^{\infty} \frac{r_c^n}{n} S_n  \right] + \overline{u}_0 \,, \quad    \mbox{on} \quad r=1 \,, \\
  \text{where} \qquad S_n = \sum_{k=0}^{m-1} & \cos(n (\theta - \theta_k))\,,
   \quad \text{with} \quad \theta_k = \frac{2 \pi k}{m} + \psi\,.
                \qquad\qquad\qquad\qquad \nonumber
\end{align} 
To determine a Fourier series representation for $u_0$, we first
  need to sum $S_n$. To do so we need the following simple lemma:

\begin{lem}\label{Lemma1}
For $d \neq 2 \pi l$ for $l = 0, \pm1, \pm2, \dots$, we have
\begin{equation}\label{trig:iden}
  C \equiv \sum_{k=0}^{m-1} \cos(a + kd) =
  \frac{\sin(md/2)}{\sin(d/2)} \cos\left[a + (m-1)d/2\right]\,.
\end{equation}
\begin{proof}
  We multiply both sides of \eqref{trig:iden} by
  $2 \sin\left({d/2}\right)$ and use the trigonometric product-to-sum
  formula, $2 \sin(x)\cos(y) = \sin(x+y) - \sin(x-y)$. This yields a
  telescoping series, which is readily summed as
\begin{align*}
  2 C\sin(d/2) &=  \sum_{k=0}^{m-1} 2\cos(a + kd) \sin(d/2) \,, \\
     & = \sum_{k=0}^{m-1}\left( \sin\left(a + \frac{(2k+1)}{2}d \right) -
       \sin\left(a + \frac{(2k-1)}{2}d \right) \right) \,, \\
    & = \sin\left(a - \frac{d}{2} \right) +
  \sin\left( \left( a - \frac{d}{2} \right) +  md \right)  \,, \\
   & = 2 \sin\left(\frac{md}{2} \right) \cos\left[a + \frac{(m-1)d}{2}
                 \right]\,. 
\end{align*}
Now, suppose that $\sin(d/2) \neq 0$, so that $d \neq 2 \pi l$ for any
$l = 0, \pm1, \pm2, \dots$. Then,
\begin{align*}
C = \frac{\sin(md/2)}{\sin(d/2)} \cos\left[a + \frac{(m-1)d}{2} \right]\,.
\end{align*}
\end{proof}
\end{lem}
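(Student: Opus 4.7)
The plan is to reduce this trigonometric identity to a geometric series by passing to complex exponentials, which I find more transparent than the telescoping trick. Writing $\cos(a+kd) = \operatorname{Re}\bigl(e^{i(a+kd)}\bigr)$, the sum becomes
\begin{equation*}
  C = \operatorname{Re}\!\left( e^{ia} \sum_{k=0}^{m-1} e^{ikd} \right).
\end{equation*}
The hypothesis $d \neq 2\pi l$ guarantees $e^{id} \neq 1$, so the inner sum is a finite geometric series equal to $(1 - e^{imd})/(1 - e^{id})$.

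Next, I would symmetrize numerator and denominator by pulling out half-angles: $1 - e^{i\phi} = -e^{i\phi/2}\bigl(e^{i\phi/2} - e^{-i\phi/2}\bigr) = -2i\, e^{i\phi/2} \sin(\phi/2)$. Applying this to both numerator (with $\phi = md$) and denominator (with $\phi = d$), the factors of $-2i$ cancel, and the ratio simplifies cleanly to
\begin{equation*}
  \frac{1 - e^{imd}}{1 - e^{id}} = e^{i(m-1)d/2}\,\frac{\sin(md/2)}{\sin(d/2)}.
\end{equation*}
Substituting back, the sum collapses to $e^{i[a + (m-1)d/2]}\,\sin(md/2)/\sin(d/2)$, and taking the real part yields the stated identity.

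There is essentially no analytical obstacle here; the only minor step requiring care is the half-angle factorization, which is what converts the raw geometric-series output into an expression whose real part matches the target form. An entirely alternative route would be induction on $m$, using the product-to-sum formula $2\sin(x)\cos(y) = \sin(x+y) - \sin(x-y)$ in the inductive step, but the complex-exponential approach avoids a case analysis and makes the role of the hypothesis $d \neq 2\pi l$ immediate (it is precisely what keeps the denominator nonzero).
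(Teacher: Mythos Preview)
Your proof is correct and takes a genuinely different route from the paper. The paper multiplies $C$ by $2\sin(d/2)$ and applies the product-to-sum identity $2\sin(x)\cos(y) = \sin(x+y) - \sin(x-y)$ term by term, producing a telescoping sum that collapses to $2\sin(md/2)\cos[a+(m-1)d/2]$; division by $2\sin(d/2)$ then finishes. Your approach instead recognizes the sum as the real part of a finite geometric series in $e^{id}$ and uses the half-angle factorization $1-e^{i\phi} = -2i\,e^{i\phi/2}\sin(\phi/2)$ to extract the closed form. Both arguments hinge on the same hypothesis in the same way (nonvanishing of $\sin(d/2)$, equivalently $e^{id}\neq 1$), and both are one-line computations once the key identity is invoked. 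The complex-exponential route has the advantage that it simultaneously yields the companion identity for $\sum_{k=0}^{m-1}\sin(a+kd)$ by reading off the imaginary part, whereas the telescoping argument stays entirely real and requires no passage through $\mathbb{C}$.
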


\noindent By using Lemma \ref{Lemma1}, we can calculate $S_n$,
  as defined in \eqref{U0_fulSol}, as follows:
\begin{lem}\label{Lemma2}
	For $n \geq 1$ and $j^{\prime}=1,2,\ldots$, we have
\begin{align}\label{lemma2:sn}
S_n = \begin{cases}
  m \cos \Big{(} j^{\prime}m(\theta - \psi) \Big{)}, \quad \text{if}
  \quad n = j^{\prime} m\\
0, \quad \text{if} \quad n \neq j^{\prime} m \,.
\end{cases}
\end{align}
\end{lem}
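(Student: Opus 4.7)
The plan is to reduce Lemma~\ref{Lemma2} to a direct application of Lemma~\ref{Lemma1}. First I would substitute $\theta_k = 2\pi k/m + \psi$ into the definition of $S_n$ to rewrite it as
\begin{equation*}
S_n = \sum_{k=0}^{m-1} \cos\!\left(n(\theta-\psi) - \tfrac{2\pi n k}{m}\right),
\end{equation*}
which matches the form $\sum_{k=0}^{m-1}\cos(a+kd)$ of Lemma~\ref{Lemma1} with the identifications $a = n(\theta-\psi)$ and $d = -2\pi n/m$.

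The argument then splits into two cases according to whether $n$ is a positive integer multiple of $m$. In the non-resonant case, where $n \neq j^{\prime} m$ for any $j^{\prime} \in \mathbb{Z}^{+}$, the increment $d = -2\pi n/m$ is not an integer multiple of $2\pi$, so Lemma~\ref{Lemma1} applies. Since $md/2 = -\pi n$, one has $\sin(md/2) = 0$ while $\sin(d/2) = \sin(-\pi n/m) \neq 0$, and hence the closed-form expression immediately gives $S_n = 0$.

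In the resonant case $n = j^{\prime} m$, Lemma~\ref{Lemma1} is not directly applicable because both the numerator and denominator of its formula vanish. Here I would instead compute the summands by hand: since $n\theta_k = 2\pi j^{\prime} k + j^{\prime} m \psi$ and $2\pi j^{\prime} k$ is an integer multiple of $2\pi$, each term of the sum reduces to $\cos(j^{\prime} m (\theta - \psi))$, independent of $k$. Summing the $m$ identical terms then yields $S_n = m \cos(j^{\prime} m(\theta - \psi))$, which establishes the second case.

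The main ``obstacle''---if there is one---is simply carving out the resonant subsequence $n = j^{\prime} m$ as an exceptional case, since Lemma~\ref{Lemma1} explicitly excludes $d = 2\pi \ell$. Apart from this bookkeeping, the result is a short consequence of Lemma~\ref{Lemma1} together with the elementary observation that $\sin(\pi n) = 0$ for integer $n$. This orthogonality-type identity then feeds directly into the Fourier representation \eqref{U0_fulSol}, explaining at a structural level the dichotomy in Proposition~\ref{u_bar_prop_main} between the cases ${\mc{N}/m} \in \mathbb{Z}^{+}$ and ${\mc{N}/m} \notin \mathbb{Z}^{+}$.
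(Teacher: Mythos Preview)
Your proof is correct. The non-resonant case ($n \neq j'm$) is handled exactly as in the paper: both identify $a = n(\theta-\psi)$, $d = -2\pi n/m$, invoke Lemma~\ref{Lemma1}, and observe that $\sin(md/2)=\sin(-\pi n)=0$ while $\sin(d/2)\neq 0$. In the resonant case $n=j'm$, however, you take a different and more elementary route: you simply note that $n\theta_k = 2\pi j'k + j'm\psi$, so every summand equals $\cos(j'm(\theta-\psi))$ and the sum is $m$ copies of that term. The paper instead keeps the closed form from Lemma~\ref{Lemma1} and resolves the $0/0$ indeterminacy in $\sin(\pi n)/\sin(\pi n/m)$ by L'H\^opital's rule, then simplifies the resulting trigonometric expression. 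Your direct evaluation is shorter and avoids the limiting argument entirely; the paper's approach has the minor advantage of treating both cases through a single formula, but at the cost of an unnecessary detour through L'H\^opital.
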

\begin{proof}
  Define $a$ and $d$ by $a = n(\theta - \psi)$ and $d = - {2\pi n/m}$.
  From Lemma \ref{Lemma1}, it follows that if $d \neq 2\pi l$ for
  $l = 0, \pm1, \pm2, \dots$, then $S_n$ satisfies
\begin{align}
  S_n & =  \sum_{k=0}^{m-1} \cos \Big{(}n (\theta - \psi) -  \frac{2 \pi nk}{m}
        \Big{)}
        = \frac{\sin(\pi n)}{\sin\left(\frac{\pi n}{m}\right)}
        \cos \Big{(} n(\theta - \psi) - \pi
      n \frac{(m-1)}{m} \Big{)}\,, \nonumber \\
      &=  \frac{\sin(\pi n)}{\sin\left(\frac{\pi n}{m}\right)}
        \left[  \cos \Big{(} n(\theta - \psi)
  \Big{)} \cos \Big{(}\frac{\pi n(m-1)}{m} \Big{)} +
  \sin \Big{(} n(\theta - \psi) \Big{)} \sin \Big{(}\frac{\pi n (m-1)}{m}
  \Big{)} \right] \label{Sn_term}
\end{align}
This equation is valid provided that
$(n/m) \neq j^{\prime} \in \lbrace{1,2,\ldots\rbrace}$. We observe
from \eqref{Sn_term} that $S_n = 0$ for $n=1,2,\dots$ with
$n \neq j^{\prime}m$. Alternatively, if $n = j^{\prime}m$ for some
$j^{\prime}=1,2,\ldots$, then we need to evaluate the prefactor in
\eqref{Sn_term} using L'H\^{o}pital's rule. To this end, we define
$g(x) \equiv \frac{\sin(\pi x)}{\sin(\pi x/m)}$, so that using
L'H\^{o}pital's rule we get
$g(x) \to {m \cos(\pi j^{\prime}m)/[\cos(\pi j^{\prime})]}$ as
$x \to j^{\prime}m$. Therefore, from \eqref{Sn_term}, we derive for
$n = j^{\prime}m$ that
\begin{equation}\label{Sn_n_jm}
  S_n  =  \frac{m \cos(\pi j^{\prime}m)}{\cos(\pi j^{\prime})}
 \cos \Big{(} j^{\prime}m(\theta - \psi) \Big{)}\Big{[}
        \cos (\pi j^{\prime}m ) \cos(\pi j^{\prime}) \Big{]} =
        m \cos \Big{(} j^{\prime} m(\theta - \psi) \Big{)}\,.
\end{equation}
      
\end{proof}

Next, by substituting \eqref{lemma2:sn} for $S_n$, together with
$A_c = 1/(2mD)$ (see \eqref{Alg_A}), in \eqref{U0_fulSol}, we obtain
the Fourier series representation for $u_0$ on $r=1$ given by
\begin{equation}\label{OrderSigma_new}
\begin{split}
  \qquad  u_0   &= c_0 +  \sum_{j^{\prime}=1}^{\infty} c_{j^{\prime}}
  \cos \big{(} j^{\prime}m(\theta - \psi) \big{)}\,, \quad \mbox{on} \,\,\,
  r=1 \,, \\
 \text{where}  \quad
 c_0 &= -\frac{1}{8 D} \Big{(}2(1 + r_c^2) - 3 \Big{)}  +
 \overline{u}_0\,; \qquad c_{j^{\prime}} = - \frac{r_c^{j^{\prime}m}}{j^{\prime} mD}
 \,, \quad j^{\prime}=1,2,\ldots\,. 
\end{split}
\end{equation}

We return to the $\mc{O}(\sigma)$ outer problem
\eqref{OrderSigma_CompleteOuter} for $u_1$ and simplify the boundary
condition on $r=1$ given in \eqref{OrderSigma_CompleteOuterA} as
$u_{1r} = F(\theta) \equiv -h u_{0rr} + h_{\theta} u_{0 \theta}$ on
$r=1$. Since $u_0$ satisfies the MFPT PDE, in polar coordinates we
have that $u_{0rr} + r^{-1}u_{0r} + r^{-2}u_{0 \theta \theta} =
-1/D$. Evaluating this on $r=1$ where $u_{0r}=0$, we get that
$u_{0rr} =- u_{0 \theta \theta} -1/D$ on $r=1$. Upon substituting this
expression for $u_{0rr}$ into $F(\theta)$, we derive
\begin{align}\label{F_u1_BC}
  u_{1r}= F(\theta) = (h u_{0 \theta})_{\theta} + \frac{h}{D} \,, \quad \text{on}
  \quad r =1 \,,
\end{align}
where $u_0$ on $r=1$ is given in \eqref{OrderSigma_new} and
$h(\theta)=\cos(\mc{N} \theta)$. 

Next, we write the problem \eqref{OrderSigma_CompleteOuter}
  for $u_1$ as
\begin{align}\label{OrderSigma_CompleteOuter2}
  \nabla^2 u_1   = 2\pi \sum_{i=0}^{m-1} B_i \,\,\delta(\x - \x_i)\,,
  \quad \x \in \Omega \,; \qquad u_{1r}  = F(\theta)\,, \quad \text{on}
  \quad r = 1\,.
\end{align}
Integrating \eqref{OrderSigma_CompleteOuter2} over the unit disk, and
using the divergence theorem and the fact that
$\int_0^{2 \pi} F(\theta) \, \text{d}\theta = 0$, we conclude that
$\sum_{j=0}^{m-1} B_j = 0$.  It is then convenient to decompose
$u_1$ as
\begin{equation}\label{u1_Decompose}
u_1 =  u_{1H} + u_{1p} + \overline{u}_1 \,,
\end{equation}
where the unknown constant $\overline{u}_1$ is the average
of $u_1$ over the unit disk.  Here, $u_{1 H}$ is taken to be the
unique solution to
\begin{equation}\label{U1H_Prob}
  \nabla^2 u_{1H}  = 2\pi  \sum_{i=0}^{m-1} B_i\, \delta(\x - \x_i)\,,
  \quad \x  \in \Omega\,; \quad \partial_r u_{1H}  = 0 \,, \quad
  \text{on} \quad r = 1\,; \quad \int_{\Omega} u_{1H} \, \text{d}\x = 0 \,.
\end{equation}
In addition, $u_{1p}$ is defined to be the unique solution to 
\begin{align}\label{u1P_Prob}
 \nabla^2 u_{1p} &= 0, \quad \x  \in \Omega ; \quad
  \partial_r u_{1p} =  F(\theta) \, \quad \text{on} \quad r = 1; \quad
       \int_{\Omega} u_{1p} \, \text{d}\x = 0 \,,
\end{align}
which is readily solved using separation of variables once
$F(\theta)$ is represented as a Fourier series.

The solution to \eqref{U1H_Prob} is represented in terms of the
Neumann Green's function $G(\x;\x_i)$ of \eqref{GreenFunctionProb}, so
that
\begin{align}\label{u1_Sol}
u_1 = -2 \pi \sum_{i=0}^{m-1}  B_i  G(\x ; \x_i) + u_{1p}+ \overline{u}_1.
\end{align}
Expanding \eqref{u1_Sol} as $\x \to \x_j$, and using the singularity
behavior of $G(\x ; \x_j)$ as given in \eqref{GreenFunctionProb_B},
we derive the local behavior of $u_1$ as $\x \to \x_j$, for each
$j=0,\ldots,m-1$, which must agree with that given in
\eqref{OrderSigma_CompleteOuterB}. This yields an $(m+1)$ dimensional
algebraic system of equations for the constants $B_0,\dots,B_{m-1}$
and $\overline{u}_1$ given in matrix form by
\begin{equation}\label{System_BNu_Mat}
  (I + 2 \pi \nu \mc{G} )\v{B} = \nu \overline{u}_1 \v{e} + \nu \v{u}_{1p}\,,
  \qquad  \v{e}^T \v{B} = 0 \,.
\end{equation}
	Here, $I$ is the $m \times m$ identity matrix,
$\v{B} = (B_0,\dots,B_{m-1})^T$, $\v{e} = (1,\dots,1)^T$, and
$\v{u}_{1p} = (u_{1p}(\x_0), \dots, u_{1p}(\x_{m-1}))^T$.  Upon
multiplying this equation for $\v{B}$ on the left by $\v{e}^T$, we can
isolate $\overline{u}_1$ as
\begin{equation*}
  \nu\, \overline{u}_1 = \frac{1}{m} \Big{(} 2\pi \nu \v{e}^T \mc{G} \v{B} -
  \nu \v{e}^T \v{u}_{1p}  \Big{)} \,.
\end{equation*}
Upon re-substituting this expression into \eqref{System_BNu_Mat}, we
conclude that $\v{e}^T \v{B}=0$ and that
\begin{equation}\label{u1_bar}
\Big{[} I + 2 \pi \nu (I - E)\mc{G}  \Big{]} \v{B} = \nu (I - E)\v{u}_{1p}\,,
\quad \text{and}  \quad \overline{u}_1 = - \frac{1}{m}
\Big{(} \v{e}^T \v{u}_{1p} - 2 \pi \v{e}^T \mc{G} \v{B}  \Big{)}\,,
\end{equation}
where we have defined $E={\v{e}\v{e}^T/m}$.  This gives an equation
for the $\mc{O}(\sigma)$ average MFPT $\overline{u}_1$ in terms of the
Neumann Green's matrix $\mc{G}$, and the vectors $\v{B}$ and
$\v{u}_{1p}$.

The next step in this calculation is to solve \eqref{u1P_Prob} so as
to calculate $u_{1p}(\x_j)$ for $j=0,\ldots,m-1$. To do so, we first
need to find an explicit Fourier series representation for
$F(\theta)$, as defined in \eqref{F_u1_BC} in terms of $u_0$ on $r=1$.

By using \eqref{OrderSigma_new} for $u_0$ on $r=1$, together with
$h=\cos(\mc{N} \theta)$, we calculate that
\begin{equation*}
\begin{split}
  h u_{0 \theta} &= - \frac{\cos(\mc{N}\psi)}{2} \sum_{j^{\prime} =1}^{\infty}
  c_{j^{\prime}} j^{\prime}m \Big{[}  \sin \Big{(} (j^{\prime}m +\mc{N})
  (\theta - \psi)\Big{)} + \sin \Big{(} (j^{\prime}m - \mc{N})
  (\theta - \psi)\Big{)} \Big{]} \\
  & + \frac{\sin(\mc{N}\psi)}{2} \sum_{j^{\prime} =1}^{\infty} c_{j^{\prime}}
  j^{\prime}m \Big{[}  \cos \Big{(} (j^{\prime}m - \mc{N})(\theta - \psi)\Big{)}
  - \cos \Big{(} (j^{\prime}m + \mc{N})(\theta - \psi)\Big{)} \Big{]}\,.
\end{split}
\end{equation*}
	Upon differentiating this expression with respect to $\theta$, we obtain
  after some algebra that
\begin{equation}\label{Der_h_u0}
\Big{(} h(\theta) u_{0 \theta} \Big{)}_{\theta}= - \sum_{j^{\prime} =1}^{\infty}              
\frac{c_{j^{\prime}} j^{\prime}m}{2} \Big{[} j^{\prime}_{+}  \cos
\Big{(} j^{\prime}_{+}(\theta - \psi)+ \mc{N}\psi \Big{)} +
j^{\prime}_{-}  \cos \Big{(} j^{\prime}_{-}(\theta - \psi)- \mc{N}\psi \Big{)}
\Big{]} \,,
\end{equation}
where we have defined $j^{\prime}_{\pm}$ by
$j^{\prime}_{\pm}= j^{\prime}m \pm \mc{N}$.  Upon substituting \eqref{Der_h_u0}
into \eqref{F_u1_BC}, and recalling that
$c_{j^{\prime}} = -(r_c^{j^{\prime} m})/(j^{\prime}mD)$, we conclude
that
\begin{equation}\label{F_equa}
  F(\theta) = \frac{1}{D} \cos(\mc{N} \theta) + \frac{1}{2D}
  \sum_{j^{\prime} =1}^{\infty} r_c^{j^{\prime}m}\Big{[} j^{\prime}_{+}
  \cos \Big{(} j^{\prime}_{+}(\theta - \psi)+ \mc{N}\psi \Big{)} +
  j^{\prime}_{-}  \cos \Big{(} j^{\prime}_{-}(\theta - \psi)- \mc{N}\psi \Big{)}
  \Big{]} \,.
\end{equation}
With $F(\theta)$ as given in \eqref{F_equa}, by separation of
variables the solution $u_1$ to \eqref{u1P_Prob} that is bounded as
$r\to 0$ is
\begin{equation}\label{u1P_Sol}
\begin{split}
u_{1p} &=   \sum_{\substack{j^{\prime} =1\\ j^{\prime}_{-} \neq\, 0} }^{\infty}              
\frac{r_c^{j^{\prime}m}}{2D}  \Big{[} r^{ j^{\prime}_{+}}  \cos \Big{(}
j^{\prime}_{+}(\theta - \psi)+ \mc{N}\psi \Big{)} +
\gamma \, r^{ |j^{\prime}_{-}|} \cos \Big{(} j^{\prime}_{-}(\theta - \psi)-
\mc{N}\psi \Big{)}  \Big{]} \\
     & \qquad + \frac{r^{\mc{N}} \cos(\mc{N} \theta)}{\mc{N} D} \,,
\end{split}
\end{equation}
where $\gamma = \text{sign}( j^{\prime}_{-})$, $m$ is the number of
traps on the ring of radius $r_c$, and $\mc{N}$ is the number of folds
on the star-shaped domain. If $\mc{N} > m$, then $j'_{-} < 0$ at least
for $j^{\prime} = 1$, while when $\mc{N} = m $ then
$j^{\prime}_{-} = 0$ when $j^{\prime}=1$.

Next, using the explicit solution \eqref{u1P_Sol}, we calculate
$u_{1p}$ at the centers of the traps given by
$\x_j = r_c \,\exp{\Big{(}(2\pi j/m + \psi)i \Big{)} }$ for
$j=0,\ldots,m-1$. At $\x = \x_j$, we have
$\theta = 2\pi {j/m} + \psi$, so that
$\cos(\mc{N} \theta) = \cos \Big{(} \mc{N}\psi + 2\pi j \mc{N}/m
\Big{)}$. Similarly, we obtain
\begin{equation}\label{Jplus_Jminus}
  \cos \Big{(} j^{\prime}_{+} (\theta - \psi) + \mc{N}\psi  \Big{)} =
  \cos \Big{(} j^{\prime}_{-} (\theta - \psi) - \mc{N}\psi  \Big{)} =
  \cos \Big{(}  \mc{N}\psi + 2\pi j \mc{N}/m \Big{)}\,.
\end{equation}
Upon evaluating \eqref{u1P_Sol} at $\x=\x_j$ and using
\eqref{Jplus_Jminus}, we obtain that
\begin{equation}\label{u1P_Sol_2}
  u_{1p}(\x_j) = \frac{r_c^{\mc{N}}}{2D}\cos
 \left(   \mc{N} \Big{(} \psi + \frac{2\pi j}{m}
  \Big{)} \right) \left[\frac{2}{\mc{N}} +
    \sum_{j^{\prime}=1}^{\infty} r_c^{2mj^{\prime}}
    + \sum_{\substack{j^{\prime} =1\\ j^{\prime}_{-} \neq\, 0} }^{\infty}
    \text{sign}( j^{\prime}_{-}) r_c^{( j^{\prime}m + |j^{\prime}_{-}|-\mc{N})}\right]
\end{equation}
for $j=0,\ldots,m-1$. This expression is used to determine the vector
$\v{u}_{1p}$ in \eqref{u1_bar}. Observe from \eqref{u1P_Sol_2} that
$u_{1p}(\x_j) $ is independent of $j$ when $\mc{N}/m$ is a positive
integer. In other words, $u_{1p}$ is independent of the location of
the traps when the number of folds $\mc{N}$ of the perturbation of the
boundary is an integer multiple of the number of traps $m$ contained
in the domain.

Finally, upon substituting $h(\theta) = \cos(\mc{N} \theta)$ and
$u_0$, as given in \eqref{OrderSigma_new}, into
\eqref{AveMFPT_Perturb}, we can evaluate the third integral in
\eqref{AveMFPT_Perturb}. In this way, we conclude that a two-term
expansion in $\sigma$ for the average MFPT $\overline{u}$ is
\begin{equation}\label{AveMFPT_Perturb2}
\begin{split}
\overline{u} \sim \overline{u}_0 + \sigma  \overline{u}_1 +   
\begin{cases}
  0, \quad &\text{if} \quad  ( \mc{N}/m) \notin \mathbb{Z}^{+} \\
  - \sigma \Big{(} r_c^{\mc{N}} \cos(\mc{N} \psi)\Big{)}/(\mc{N} D),
  \quad &\text{if} \quad (\mc{N}/m) \in \mathbb{Z}^{+}
\end{cases} \,,
\end{split}
\end{equation}
where $\mathbb{Z}^{+}$ is the set of positive integers. Here
$\overline{u}_0$ and $\overline{u}_1$ are the leading-order and
$\mc{O}(\sigma)$ average MFPT given by \eqref{Alg_A} and the solution
to \eqref{u1_bar}, respectively.

The remainder of the calculation depends on whether
${\mc{N}/m}\in \mathbb{Z}^{+}$ or ${\mc{N}/m}\notin \mathbb{Z}^{+}$. We will
consider both cases separately.

\subsubsection{Number of folds is an integer multiple of the
  number of traps: \texorpdfstring{$(\mc{N}=k m)$} {\mc{N}=k m}}

When the number of folds on the star-shaped domain is an integer
multiple of the number of traps contained in the domain, then, from
\eqref{u1P_Sol_2}, we conclude that $u_{1p}(\x_j) $ is independent of
$j$. Therefore, using \eqref{u1P_Sol_2} and noting that
$j_{-}=(j^{\prime}-k)m$ and
$\text{sign}(j_{-})=\text{sign}(j^{\prime}-k)$, we calculate
$\v{u}_{1p} = (u_{1p}(\x_0),\dots,u_{1p}(\x_{m-1}))^T$ as
\begin{equation}\label{u1P_with_S}
\begin{split}
  \v{u}_{1p} & \equiv u_{1pc}\, \v{e}\,, \quad  \text{with}
  \quad u_{1pc} = \frac{1}{D} \cos(m \psi)\, \chi\,,\\
  \text{where} \quad \chi & \equiv \frac{r_c^{\mc{N}}}{\mc{N}} +
  \frac{1}{2} r_c^{\mc{N}} \sum_{j^{\prime}=1}^{\infty} r_c^{2m j^{\prime}} 
  -\frac{1}{2} \sum_{j^{\prime}=1}^{k-1} r_{c}^{j^{\prime} m + m (k-
    j^{\prime})} + \frac{1}{2} \sum_{j^{\prime}=k+1}^{\infty}
      r_c^{j^{\prime} m + m(j^{\prime}-k)} \,.
\end{split}
\end{equation}
{We observe that the third term in $\chi$ is proportional to
  $(k-1)$, and that we can combine the second and fourth terms into
  a single geometric series by shifting indices. In this way, and by using
  $m k=\mc{N}$, we can calculate $\chi$ explicitly as
\begin{equation}\label{u1P_with_SS}
  \chi = r_c^{\mc{N}} \left( \frac{1}{\mc{N}} - \frac{1}{2} (k-1)\right)
  + r_c^{\mc{N}} \sum_{j^{\prime}=1}^{\infty} r_c^{2 j^{\prime} m}  =
  r_c^{\mc{N}} \left( \frac{1}{\mc{N}} - \frac{1}{2} (k-1) \right)
  + \frac{r_c^{\mc{N} + 2m}}{1 - r_c^{2m}} \,.
\end{equation}}
Substituting \eqref{u1P_with_S} into \eqref{u1_bar}, and
noting that $(I-E)\v{u}_{1p}=0$ and that the matrix
$(I + 2 \pi \nu (I - E)\mc{G} )$ is invertible, we conclude that
$\v{B} = \v{0}$.  Therefore, from \eqref{u1_bar} we get that
$\overline{u}_1 = -u_{1pc}$.  In this way, by using
  \eqref{u1P_with_S}, \eqref{u1P_with_SS}, and
  \eqref{AveMFPT_Perturb2} we obtain that the ${\mathcal O}(\sigma)$
  correction, denoted by $\overline{U}_1$, to the average MFPT is
\begin{equation}\label{u1P_chi}
  \overline{U}_1 \equiv -u_{1pc} - \frac{\Big{(} r_c^{\mc{N}}
    \cos(\mc{N} \psi)\Big{)}}{\mc{N} D} =
   -\frac{\cos(\mc{N}\psi)}{D} \left(
     \frac{2r_c^{\mc{N}}}{\mc{N}} - \frac{r_c^{\mc{N}}}{2}(k-1) 
  + \frac{r_c^{\mc{N} + 2m}}{1 - r_c^{2m}}\right) \,.
\end{equation}
Finally, by combining the terms in \eqref{u1P_chi} we obtain the
main result given in Proposition~\ref{u_bar_prop_main} of
\S~\ref{sec:asymp_perturbed_unit_disk}.

\subsubsection{Number of folds is not an integer multiple of the
  number of traps: \texorpdfstring{$(\mc{N}\neq k m)$}
  {\mc{N}not=k m}}

When ${\mc{N}/m}\notin \mathbb{Z}^{+}$, we will first
  establish that $\v{e}^T\v{u}_{1p} = 0$. To show this, we define
  $z\equiv e^{2\pi i \mc{N}/m}$, where $i=\sqrt{-1}$, and calculate
  that
\begin{equation*}
  \sum_{j=0}^{m-1} \cos\left( \mc{N} \psi + \frac{2\pi j\mc{N}}{m} \right)
  = \mbox{Re} \left( e^{i \mc{N}\psi} \sum_{j=0}^{m-1} z^j \right) =
  \mbox{Re} \left( e^{i \mc{N}\psi} \frac{(1-z^m)}{1-z} \right) =0 \,,
\end{equation*}
since $z^m=1$ but $z\neq 1$, owing to the fact that
${\mc{N}/m}\neq \mathbb{Z}^{+}$. As a result, by summing the terms in
\eqref{u1P_Sol_2} over $j$, we obtain that $\v{e}^T\v{u}_{1p} = 0$.
We conclude that $\v{u}_{1p}\in {\mathcal Q}$, where
${\mathcal Q} \equiv \lbrace{ \v{q}\in \mathbb{R}^{m-1} \,\, \vert
  \,\, \v{q}^T \v{e}=0\rbrace}$. Consequently, from \eqref{u1_bar}, the
problem for $\v{B}$ and $\overline{u}_1$ reduces to
\begin{equation}\label{s:u1_bar}
\Big{[} I + 2 \pi \nu (I - E)\mc{G}  \Big{]} \v{B} = \nu \v{u}_{1p}\,,
\quad \text{and}  \quad \overline{u}_1 =  \frac{2 \pi}{m}
\v{e}^T \mc{G} \v{B}\,.
\end{equation}

Next, since the Neumann Green's matrix $\mc{G}$ is cyclic and
symmetric, its matrix spectrum is given by
\begin{equation}
  \mc{G}\v{e}=\kappa_1\v{e} \,; \qquad   \mc{G}\v{q}_j=\kappa_j\v{q}_j \,,
  \quad j=2,\ldots,m \,,
\end{equation}
where $\v{q}_j^T\v{q}_i=0$ for $i\neq j$ and $\v{e}^T\v{q}_j=0$ for
$j=2, \ldots,m$. Therefore, the set
$\lbrace{\v{q}_2,\ldots,\v{q}_m\rbrace}$ forms an
orthogonal basis for the subspace ${\mathcal Q}$. As such, since
$\v{u}_{1p}\in {\mathcal Q}$, we have
$\v{u}_{1p}=\sum_{j=2}^{m} d_j \v{q}_j$, for some coefficients $d_j$,
for $j=2,\ldots,m$, and we can seek a solution for
$\v{B}$ in \eqref{s:u1_bar} in the form
$\v{B}=\sum_{j=2}^{m} b_j \v{q}_j$ for some $b_j$, $j=2,\ldots,m$. Since
$E\v{q}_j=0$, we readily calculate that
\begin{equation}\label{sp:bsolve}
  \v{B} = \nu \sum_{j=2}^{m} \frac{d_j}{1+2\pi \nu \kappa_j} \v{q}_j \,,
  \qquad \mbox{where} \qquad
  d_j = \frac{\v{q_j}^T \v{u}_{1p}}{\v{q_j}^T\v{q}_j} \,.
\end{equation}
Then, since $\mc{G}\v{B} \in {\mathcal Q}$ and $\v{e}^T\v{q}=0$ for
$\v{q}\in {\mathcal Q}$, it follows that $\v{e}^T\mc{G}\v{B}=0$ so
that $\overline{u}_1=0$ in \eqref{s:u1_bar}. Finally, in view of
\eqref{AveMFPT_Perturb2}, we conclude that the correction of order
${\mathcal O}(\sigma)$ in the average MFPT vanishes. This establishes
the result given in Proposition~\ref{u_bar_prop_main} of
\S~\ref{sec:asymp_perturbed_unit_disk} when ${\mc{N}/m}\notin \mathbb{Z}^{+}$.

\subsection{Approximations for optimal trap configurations
  in a thin ellipse}\label{supp:anal_2}

We provide some details for the two different approximation schemes
outlined in \S~\ref{sec:skinnyellipse} for estimating the optimal
average MFPT for an elliptical domain of high-eccentricity that
contains three traps centered along the semi-major axis.

\subsubsection{Equivalent thin rectangular domains: Three traps}\label{thin:rec}

We extend the calculation of \S~\ref{sec:thin} to the case of three
circular absorbing traps of a common radius $\varepsilon$, where one
of the traps is located at the center of the ellipse, while the other
two traps are centered on the major axis symmetric about the origin.

We follow a similar approach as for the two traps case in
\S~\ref{sec:thin}, where we replace the ellipse with a thin
rectangular region, chosen so that the area of the region and that of
the traps is preserved. The corresponding MFPT problem on the
rectangle is to solve \eqref{SkinnyEllp_2Traps_Prob} with the
additional requirement that $u=0$ for $x=\pm \varepsilon_0$ on
$|y|\leq b$. Upon calculating the 1-D solution $u(x)$ to this MFPT
problem, we then integrate it over the rectangle to determine the
average MFPT $\overline{u}$ as
\begin{equation}\label{SkinnyEll_3Traps_AveMFPT}
  \overline{u} = C \left( - \frac{1}{4} \, x_{0}^{3}  + \frac{1}{2} \,
  {\left(2 \, a_{0} -3 \, \varepsilon_{0}\right)} x_{0}^{2}   -
  {\left(a_{0}^{2} - 2 \, a_{0} \varepsilon_{0}\right)} x_{0} + \frac{1}{3}
  \, a_{0}^{3} - a_{0}^{2} \varepsilon_{0} + a_{0}
\varepsilon_{0}^{2} - \varepsilon_{0}^{3} \right) \,,
\end{equation}
where $C={4 \,b_0/\left[\pi\,D\,(1-3 \varepsilon^2)\right]}$ and $x_0$ is the
$x$-coordinate of the right-most trap.
    
To determine the optimal average MFPT as $x_0$ is varied, we set
${d\overline{u}/dx_0}=0$ in \eqref{SkinnyEll_3Traps_AveMFPT}. The
critical point that minimizes the average MFPT is
\begin{equation}\label{SkinnyEll_3Traps_Opt_x0}
x_{0\, {\textrm{opt}} }= \frac{2a_0}{3} = \frac{\pi}{6\,b_0} \,,
\end{equation}
where we used $a_0={\pi/(4b_0)}$ from
\eqref{SkinnyEllp_Cond_for_Area}. This gives the optimal trap
locations as $(\pm 2a_0/3, 0)$.  As compared to the result in
\S~\ref{sec:thin} for two traps, the optimal traps have moved closer
to the reflecting boundaries at $x=\pm a_0$. Upon substituting
\eqref{SkinnyEll_3Traps_Opt_x0} into \eqref{SkinnyEll_3Traps_AveMFPT},
and writing $a_0$ and $\varepsilon_0$ in terms of the width of the
rectangular region $b_0$ using the equal area condition
\eqref{SkinnyEllp_Cond_for_Area}, we obtain that the optimal average
MFPT for the rectangle is
\begin{equation}\label{SkinnyEllp_3Traps_OptAve}
  \overline{u}_{\textrm{opt}} =  \frac{\pi^2}{432\,D \,b_{0}^{2}}
  \Big{(} 1 - 6 \,
 \varepsilon^{2} + \mathcal{O}(\varepsilon^4) \Big{)}.
\end{equation}
This shows that $\overline{u}_{\textrm{opt}}=\mathcal{O}( b_0^{-2})$, and as
expected, the optimal average MFPT is smaller than that in
\eqref{SkinnyEllp_OptLoc} of \S~\ref{sec:thin} for the case of two traps.

To relate the optimal MFPT in the thin rectangular domain to that in the
thin elliptical domain, we proceed as in \S~\ref{sec:thin} for the
two-trap case. We first set $a=a_0$, so that the length of the
rectangular domain and the ellipse along the major axis are the
same. From \eqref{SkinnyEllp_Cond_for_Area},
we obtain $b_0 = (\pi b)/4$, where $b$ is the semi-minor axis of the
ellipse, and so \eqref{SkinnyEll_3Traps_Opt_x0} and
\eqref{SkinnyEllp_3Traps_OptAve} become
\begin{equation}\label{3SkinnyEllp_CaseI}
x_{0\, {\textrm{opt}} } =   \frac{2}{3b} \quad \mbox{and} \quad
  \overline{u}_{\textrm{opt}} \approx  \frac{1}{27\, D \,b^{2}}  \Big{(} 1 - 6 \,
  \varepsilon^{2} + \mathcal{O}(\varepsilon^4) \Big{)}\,; 
    \qquad   \mbox{Case I:} \,\,\, (a=a_0) \,.
\end{equation}
The second possibility is to choose $b = b_0$, so that the width of
the thin rectangle and ellipse are the same. From
\eqref{SkinnyEll_3Traps_Opt_x0} and \eqref{SkinnyEllp_3Traps_OptAve},
we get
\begin{equation}\label{3SkinnyEllp_CaseII}
  x_{0\, {\textrm{opt}} } =     \frac{\pi}{6b} \quad \mbox{and} \quad
  \overline{u}_{\textrm{opt}} \approx
  \frac{\pi^2}{432\, D \,b^{2}}  \Big{(} 1 - 6 \,
 \varepsilon^{2} + \mathcal{O}(\varepsilon^4) \Big{)}\,; \qquad
   \mbox{Case II:} \,\,\, (b=b_0) \,.
\end{equation}
Similarly to the two-trap case, the results in
\eqref{3SkinnyEllp_CaseI} and \eqref{3SkinnyEllp_CaseII} provide upper
and lower bounds, respectively, for the optimal locations of the trap and the optimal average MFPT in the
thin elliptical region.

\subsubsection{A perturbation approach for long thin domains}\label{supp:long_thin}

In the asymptotic limit of a long thin domain, we use a
perturbation approach on the MFPT PDE \eqref{eqn:three_traps_mfpt}
in \S~\ref{sec:long_thin} for $u(x,y)$ in order to derive the
limiting problem \eqref{sec:long_u0}.

We first introduce the stretched variables $x$ and $y$ by
$X = \delta x, Y = {y/\delta}$ and $d = {x_0/\delta}$, and we label
$U(X,Y)=u({X/\delta},Y\delta)$. Then the PDE in
\eqref{eqn:three_traps_mfpt} becomes
\begin{equation}
\delta^4 \partial_{XX} U + \partial_{YY} U = -\frac{\delta^2}{D} \,.
\label{eqn:three_traps_nondim_pde}
\end{equation}
For $\delta\ll 1$, this suggests an expansion of $u$ given by
\begin{equation}
U = \delta^{-2} U_0 + U_1 + \delta^2 U_2 + \ldots \,.
\label{eqn:three_traps_expansion}
\end{equation}
Upon substituting (\ref{eqn:three_traps_expansion}) into
(\ref{eqn:three_traps_nondim_pde}), and equating powers of $\delta$, we obtain
\begin{equation}
\begin{aligned}
{\mathcal O}(\delta^{-2})\,:&\quad\partial_{YY} U_0 = 0\,, \\
{\mathcal O}(1)\,: &\quad\partial_{YY} U_1 = 0\,, \\
{\mathcal O}(\delta^2)\,: &\quad\partial_{YY} U_2 = -\frac{1}{D} -
\partial_{XX} U_0 \,.
\end{aligned}
\label{eqn:three_traps_pde_different_order}
\end{equation}

On the boundary $y = \pm\delta F(\delta x)$, or equivalently
$Y = \pm F(X)$, the unit outward normal is
$\hat{\mathbf{n}} = {\mathbf{n}/|\mathbf{n}|}$, where
$\mathbf{n} \equiv (-\delta^2 F^{\prime}(X),\pm1)$. The condition for the
vanishing of the outward normal derivative in
\eqref{eqn:three_traps_mfpt} becomes
\begin{equation*}
  \partial_n u = \hat{\mathbf{n}} \cdot (\partial_x u, \partial_y u) =
  \frac{1}{|\mathbf{n}|}(-\delta^2F^{\prime}, \pm 1) \cdot
  (\delta\partial_X U, \delta^{-1}\partial_Y U) = 0\,, \,\,\, \mbox{on}
  \,\,\, Y = \pm F(X) \,.
\end{equation*}
This is equivalent to the condition that
\begin{equation}
  \partial_Y U = \pm \delta^4 F^{\prime}(X) \partial_X U\quad \mbox{on}
  \quad  Y = \pm F(X) \,. \label{eqn:three_traps_bc}
\end{equation}
Upon substituting (\ref{eqn:three_traps_expansion}) into
(\ref{eqn:three_traps_bc}) and equating powers of $\delta$ we obtain on
$Y=\pm F(X)$ that
\begin{equation}
\begin{aligned}
{\mathcal O}(\delta^{-2})\,: &\quad\partial_Y U_0 =0\,, \\
{\mathcal O}(1)\,; &\quad\partial_Y U_1 = 0\,, \\
{\mathcal O}(\delta^2)\,; &\quad\partial_Y U_2 = \pm F^{\prime}(X)
\partial_X U_0 \,.
\end{aligned}
\label{eqn:three_traps_bc_different_order}
\end{equation}

From (\ref{eqn:three_traps_pde_different_order}) and
(\ref{eqn:three_traps_bc_different_order}) we conclude that
$U_0 = U_0(X)$ and $U_1 = U_1(X)$.  Assuming that the trap radius
$\varepsilon$ is comparable to the domain width $\delta$ we will
approximate the zero Dirichlet boundary condition on the three traps
as zero point constraints for $U_0$ at $X=0,\pm d$.

A multi-point BVP for $U_0(X)$ is derived by imposing a solvability
condition on the ${\mathcal O}(\delta^2)$ problem for $U_2$ given by
\begin{equation}\label{long:u2}
  \partial_{YY} U_2 = -\frac{1}{D} - U_0^{\prime\prime}\,, \,\,\,
  \mbox{in}\,\,\, \Omega\setminus\Omega_a\,; \quad \partial_Y U_2 =
  \pm F^{\prime}(X) U_0^{\prime}\,, \,\,\, \mbox{on} \,\,\,
   Y = \pm F(X)\,, \,\, |X|<1 \,.
\end{equation}
To derive this solvability condition for \eqref{long:u2}, we multiply
the problem for $U_2$ by $U_0$ and integrate in $Y$ over
$-F(X)<Y<F(X)$. Upon using Lagrange's identity and the boundary
conditions in \eqref{long:u2} we get
\begin{equation}
\begin{aligned}
  \int_{-F(X)}^{F(X)} \left(U_0 \partial_{YY} U_2 - U_2 \partial_{YY} U_0\right)\,
  \text{d}Y &= \left[ U_0 \partial_Y U_2 - U_2 \partial_Y U_0  \right]
  \Big{\vert}_{-F(X)}^{F(X)}\,, \\
  \int_{-F(X)}^{F(X)} U_0 \left( -\frac{1}{D} - U_{0}^{\prime\prime} \right) \, \text{d}Y
  &= 2U_0 F^{\prime}(X) U_0^{\prime} \,, \\
  2F(X)U_0\left(-\frac{1}{D} - U_0^{\prime\prime}\right) &= 2U_0 F^{\prime}(X)
  U_0^{\prime}\,. \\
\end{aligned}
\end{equation}
Thus, $U_0(X)$ satisfies the ODE 
$\left[F(X)U_0^{\prime}\right]^{\prime}= -{F(X)/D}$ as given in
\eqref{sec:long_u0} of \S~\ref{sec:long_thin}.

\subsection{Asymptotic analysis of a fast rotating trap}\label{supp:anal_3}

We summarize the derivation of the result given in
\S~\ref{sec:fastrottrap} for the optimal radius of rotation of the
rotating trap problem of \S~\ref{sec:TwoTrapsDisk} in the limit of
fast rotation $\omega\gg 1$. In this limit, the asymptotic MFPT
$u(\rho)$ satisfies the multi-point BVP
(\ref{fast_mfpt}), which has the solution
\begin{align*}
  u &= \frac{1}{4}\Big{(}(r - \eta)^2 - \rho^2 \Big{)} +
 \frac{1}{4\log\left(\frac{\varepsilon}{r - \eta}\right)}
 \left[(\varepsilon^2 - (r - \eta)^2 ) \log\left(\frac{\rho}{r - \eta}\right)
  \right] \,, \,\,\, \varepsilon\le \rho \le r-\eta \,, \\
  u &= \frac{1}{4}((r + \eta)^2 - \rho^2) + \frac{1}{2}
   \log\left(\frac{\rho}{r + \eta}\right)\,, \,\,\, r+\eta \le \rho \le 1 \,.
\end{align*}    

To compute the average MFPT, denoted by $U(r)$, over the
  unit disk, we need to calculate
  $I=\int_{0}^{r-\eta} u\rho \, \text{d}\rho + \int_{r+\eta}^{1} u\rho \,
  \text{d}\rho$.  By doing so, we obtain that $U(r)$ is given in
  (\ref{Analy_Int}). To optimize the average MFPT with respect to the
  radius of rotation of the fast moving trap, we simply set
  $U^{\prime}(r)=0$.  This leads to the following transcendental
  equation for $r$ in terms of the radii $\eta$ and $\varepsilon$ of
  the two traps:
\begin{equation}\label{Deriv_AveMFPT}
  \mathcal{A}(r) + 4 \mathcal{B}(r)
  \log\left(\frac{\varepsilon}{r-\eta }\right)^{2} -
  4 \log\left(\frac{\varepsilon}{r-\eta }\right) \mathcal{C}(r) =0 \,.
\end{equation}
Here $\mathcal{A}(r) $, $\mathcal{B}(r) $, and $\mathcal{C}(r) $ are defined
by
\begin{align*}
  \mathcal{A}(r) & = \varepsilon^{4} \eta - 2 \, \varepsilon^{2} \eta^{3} +
                   \eta^{5} - 3 \,
\eta r^{4} + r^{5} - 2 \, \left(\varepsilon^{2} - \eta^{2}\right)
r^{3} + 2 \, \left(\varepsilon^{2} \eta + \eta^{3}\right) r^{2}  \\
& \qquad +\left(\varepsilon^{4} + 2 \, \varepsilon^{2} \eta^{2} - 3 \,
    \eta^{4}\right) r\,,\\ 
\mathcal{B}(r) & = 2 \, \eta^{5} - 6 \, \eta r^{4} - 2 \,
    \eta^{3} + 2 \, \left(2 \, \eta^{3} + \eta\right) r^{2} + 2 \, r^{3} -
\left(2 \, \eta^{2} + 1\right) r + \eta \,,  \\
\mathcal{C}(r) & = \varepsilon^{2} \eta^{3} - \eta^{5} + 3 \, \eta
r^{4} - r^{5} + \left(\varepsilon^{2} - 2 \, \eta^{2}\right)
r^{3} - \left(\varepsilon^{2} \eta + 2 \, \eta^{3}\right) r^{2} -
 \left(\varepsilon^{2} \eta^{2} - 3 \, \eta^{4}\right) r\,.
\end{align*}
To determine the optimal $r$ we need to numerically compute the
root of \eqref{Deriv_AveMFPT}. The results were shown in
Figure \ref{TwoTrap_Disk_Analysis}.

\end{appendix}

\end{document}